\documentclass[11pt]{article}

\usepackage[margin=1in]{geometry}
\usepackage{amsmath,amssymb,amsthm,mathtools}
\usepackage{algorithm}
\usepackage{algorithmic}
\usepackage{enumitem}
\usepackage{hyperref}
\usepackage{comment}
\usepackage{xcolor}

\newtheorem{theorem}{Theorem}
\newtheorem{lemma}{Lemma}
\newtheorem{assumption}{Assumption}
\newtheorem{remark}{Remark}
\newtheorem{definition}{Definition}
\newtheorem{corollary}{Corollary}

\newcommand{\Pbb}{\mathbb P}
\newcommand{\cC}{\mathcal C}

\newcommand{\norm}[1]{\left\lVert #1\right\rVert}
\newcommand{\abs}[1]{\left|#1\right|}
\newcommand{\real}{\mathbb{R}}

\newcommand{\1}{\mathbf{1}}
\newcommand{\Ebb}{\mathbb{E}}
\newcommand{\eps}{\varepsilon}
\newcommand{\Ber}{\operatorname{Ber}}
\newcommand{\Signal}{\operatorname{Signal}}
\newcommand{\argmax}{\operatorname*{arg\,max}}

\newcommand{\Ses}{\mathcal{S}_{\mathrm{es}}}
\newcommand{\ED}{\mathcal{E}_D}

\newcommand{\one}{\mathbf{1}}

\newcommand{\poly}{\operatorname{poly}}

\newcommand{\Pp}{\mathbb P}
\newcommand{\Ee}{\mathbb E}

\newcommand{\ind}{\mathbf 1}
\newcommand{\cE}{\mathcal E}

\newcommand{\cS}{\mathcal S}
\newcommand{\Sn}{\mathfrak S}
\newcommand{\G}{\mathcal G}
\newcommand{\Tin}{\widetilde T}
\newcommand{\Hin}{\widetilde H}
\newcommand{\Psiin}{\widetilde\Psi}

\newcommand{\dorb}{d_{\mathrm{orb}}}
\newcommand{\Tr}{\operatorname{Tr}}
\newcommand{\Law}{\mathcal L}

\usepackage{tikz}
\usetikzlibrary{arrows.meta,positioning,shapes.geometric}

\begin{document}

\begin{center}
{\bf{\LARGE{Near-optimal node-private community estimation \\ in polynomial-time}}} \\
%{\bf{\LARGE{Polynomial-time sampler for node-private community estimation in stochastic block models}}} \\
\vspace*{.25in}
\begin{tabular}{ccc}
{\large{Laurentiu Marchis}} & \hspace*{.5in}  & {\large{Olga Klopp}}\\
{\large{Univesrity of Cambridge}} & \hspace*{.5in}  & {\large{ESSEC Business School}}\\
{\large{\texttt{lam223@cam.ac.uk}}} & \hspace*{.5in} & {\large{\texttt{b00732676@essec.edu}}} \\
\vspace*{.25in} \\
{\large{Po-Ling Loh}} & \hspace*{.5in}  & {\large{Ilias Zadik}}\\
{\large{University of Cambridge}} & \hspace*{.5in}  & {\large{Yale University}}\\{\large{\texttt{pll28@cam.ac.uk}}} & \hspace*{.5in} & {\large{\texttt{ilias.zadik@yale.edu}}} \\
\end{tabular}

\end{center}

%\maketitle

\begin{abstract}

In this paper, we resolve an open question of Klopp \& Zadik (2026) by providing a high-probability polynomial-time, node-private algorithm which nearly matches the performance of their exponential-time node-private algorithm for exact recovery in stochastic block models. Our result involves an explicitly constructed Lipschitz surrogate for the penalized likelihood function, as well as a carefully devised accept-reject algorithm that samples community labels from the corresponding exponential mechanism in polynomial-time. We rigorously analyze the privacy, runtime, and utility of our proposed algorithm, showing that even when the number of communities $K$ grows logarithmically with the number of nodes $n$, we can achieve the minimax rates for exact recovery with the privacy parameter $\varepsilon$ growing as $\log(n)$, thus matching known lower bounds on the cost of privacy for this setting.
\end{abstract}

\section{Introduction}

Since its inception 20 years ago~\cite{Dwo06, dwork2006calibrating, dwork2008differential}, differential privacy has drawn a great deal of attention in both theory and practice. In addition to devising new algorithms with minimal privacy leakage~\cite{abadi2016deep, koskela2020computing, zhu2022optimal} and auditable guarantees~\cite{ding2018detecting, jagielski2020auditing, nasr2023tight, steinke2023privacy}, researchers have been interested in tightly characterizing tradeoffs between privacy, accuracy, and sometimes computational aspects of differentially private algorithms~\cite{bun2014fingerprinting, karwa2017finite, duchi2018minimax, kamath2019privately, cai2021cost, kamath2025bias}.

Despite the large volume of existing theoretical work on privacy, one area that has received relatively less attention is private estimation from graph inputs. Most previous work focuses on privacy with respect to edge perturbations~\cite{NisEtal07, hay2009accurate, karwa2011private, gupta2012iterative, blocki2012johnson}, while~\cite{karwa2012differentially, blocki2012johnson} developed general strategies for turning edge-private algorithms into node-private algorithms (i.e., two graphs are neighbors if one can be transformed to the other by altering connections to a single node).

In this paper, we study the problem of community estimation under a random graph model known as the stochastic block model (SBM)~\cite{holland1983stochastic}. The literature on (non-private) community estimation in stochastic block models is vast; we refer the reader to~\cite{abbe2018community} for an excellent survey. Notably, most existing work on private SBM estimation has focused on edge-private algorithms~\cite{HehEtal22, mohamed2022differentially, CheEtal23, chakraborty2024prime, nguyen2024differentially, koskela2025price}, while the focus in our paper will be on node-private algorithms which are minimax optimal in terms of statistical error.

\subsection{Related work}

Node-private estimation in SBMs was previously studied by~\cite{chen2024private}, who focused on estimating the $K \times K$ matrix of connection probabilities, rather than recovering communities. Our work is most directly related to \cite{klopp2026node}, who developed the first algorithm for private community estimation. However, the runtime of their  proposed algorithm is exponential in the size of the graph. Recently, \cite{marchis2026node} developed node-private community estimation algorithms which are computationally feasible; however, they focused on a weaker notion of consistent recovery. Their algorithmic approaches are completely separate from ours, as they are based on private spectral algorithms and smooth projections to the space of bounded-degree graphs, rather than direct sampling from a Gibbs distribution.

\subsection{Contributions}

Our main contribution is to analyze a novel algorithm for node-private community estimation in SBMs, which runs in time polynomial in the number of nodes $n$ and the number of communities $K$. The runtime is with high probability over the input adjacency matrix drawn from the SBM: We provide separate results with a polynomial expected runtime guarantee (Theorem~\ref{thm:runtime}), uniformly over the internal randomness of the algorithm, and a \emph{worst-case} runtime guarantee (Corollary~\ref{cor:choice_M_pure}), at the cost of a larger privacy parameter. Importantly, the output of our algorithm achieves the minimax risk of the exponential time algorithm in \cite{klopp2026node}. Our analysis allows for settings where $K$ grows logarithmically with $n$. By sharpening the utility analysis of \cite{klopp2026node}, we show that a privacy level of $\varepsilon = \Theta(\log(n))$ is both necessary and sufficient for the success of our algorithm.

Our private algorithm builds upon the exponential mechanism~\cite{mcsherry2007mechanism}, which samples a candidate with probability proportional to the exponential of a suitably scaled utility score.
%Thus, points with larger score are exponentially favored, while scaling by the score sensitivity ensures privacy. 
We choose the score so that near-maximizers retain utility guarantees which nearly match \cite{klopp2026node}. The main challenges are to show that: (1) we can define an efficiently computable score function which has low sensitivity on in-community bounded-degree graphs (implying privacy) and is sufficiently close to the penalized likelihood score (implying utility); and (2) we can sample from the corresponding exponential mechanism in a computationally tractable manner. We analyze these aspects in Sections~\ref{SecScore} and~\ref{SecSampling}, respectively, and derive the consequences in terms of high-probability runtime and utility in Section~\ref{SecSBMAnalysis}. Finally, we demonstrate how a truncated version of the rejection sampler can be used to obtain approximate samples from the desired Gibbs distribution with a polynomial runtime guarantee that is worst- rather than average-case (cf.\ Section~\ref{sec:truncated-sampler} and Appendix~\ref{AppApprox}). As we are aware of little work in the privacy literature which uses approximate sampling (cf.\ Remark~\ref{RemTruncated}), we believe this also constitutes an important conceptual contribution which may be of independent interest.

A key algorithmic ingredient is to decompose the Gibbs measure into pieces which are easy to sample from: we devise a proposal distribution expressed as a product measure over partitions, with an easy-to-implement rejection step, leveraging a carefully defined semidefinite program (SDP). We note that this technique is employed in many fields of sampling theory; see \cite{gilks1992adaptive} for a standard scheme for sampling one-dimensional Gibbs measures. In high dimensions, as in the present work, this approach is significantly more challenging. Another related example is how the Hubbard--Stratonovich decomposition \cite{hubbard1959calculation,stratonovich1958method} has been used to sample from complicated Gibbs measures of high-temperature Ising models, by decomposing the measure as a strongly log-concave mixture of product measures. This idea has been key for a series of striking recent algorithmic advances (see e.g., \cite{chen2022localization} and references therein). However, we are not aware of any similar decompositions used in differential privacy, and we hope the present paper will inspire further algorithmic work on efficient sampling from utility-optimal exponential mechanisms.

As our polynomial-time estimator achieves the minimax exact-recovery error with privacy budget matching the lower bound of
$\varepsilon=\Omega(\log(n))$~\cite{klopp2026node}, it is natural to wonder whether an alternative privacy notion could avoid
the need for a privacy parameter that grows with $n$. For example, \cite{bombari2025better} recently studied one-pass gradient
descent for private high-dimensional linear regression, and showed that it is possible to obtain non-trivial risk
guarantees with a constant zero-concentrated differential privacy (zCDP) budget~\cite{bombari2025better}. Their motivation to study zCDP is because standard gradient clipping would require a diverging privacy budget under the usual privacy paradigm.
In Appendix~\ref{AppZero}, we show that the SBM setting is fundamentally different: information-theoretically,
any estimator with polynomially small exact-recovery failure must have
privacy parameter $\rho=\Omega(\log(n))$, even under zCDP. Similar results are derived for approximate differential privacy in Appendix~\ref{AppLBApprox}. Thus, the need for a
diverging privacy budget is not a result of focusing solely on pure differential privacy. It is an interesting open question whether analogous lower bounds hold under other
notions of privacy.

%\begin{figure}[t]
%\centering
%\begin{tikzpicture}[
%    >=Latex,
%    every node/.style={font=\small},
%    box/.style={
%        draw,
%        rounded corners,
%        thick,
%        align=center,
%        minimum width=3.0cm,
%        minimum height=1.1cm,
%        inner sep=4pt
%    },
%    io/.style={
%        draw,
%        rounded corners,
%        thick,
%        align=center,
%        minimum width=2.4cm,
%        minimum height=1.0cm,
%        inner sep=4pt,
%        fill=gray!10
%    },
%    arrow/.style={->, thick}
%]
%
%\node[io] (input) {Input\\$A$};
%
%\node[box, below left=1.2cm and 0.9cm of input] (surrogate)
%{Likelihood surrogate\\$\tilde{T}_{A,D}$};
%
%\node[box, below right=1.2cm and 0.9cm of input] (candidate)
%{Candidate SDP\\labeling $\sigma^\star$};
%
%\node[box, below=1.5cm of input] (cert)
%{SDP certification\\Equation $\eqref{EqnCertify}$};
%
%\node[box, below=1.2cm of cert] (sampler)
%{Rejection sampler\\Algorithm \ref{alg:certified-sampler}};
%
%\node[io, below=1.2cm of sampler] (output)
%{Output\\$\hat{\sigma}$};
%
%\draw[arrow] (input.south west) -- (surrogate.north);
%\draw[arrow] (input.south east) -- (candidate.north);
%
%\draw[arrow] (surrogate.south east) -- (cert.west);
%\draw[arrow] (candidate.south west) -- (cert.east);
%
%\draw[arrow] (cert) -- (sampler);
%\draw[arrow] (sampler) -- (output);
%
%\end{tikzpicture}
%
%\caption{High-level structure of the proposed node-DP estimator (Algorithm \ref{alg:full})}
%\label{fig:algorithm_pipeline}
%\end{figure}

\begin{figure}[t]
\centering
\begin{tikzpicture}[
    >=Latex,
    every node/.style={font=\small},
    box/.style={
        draw,
        rounded corners,
        thick,
        align=center,
        minimum width=2.8cm,
        minimum height=1.0cm,
        inner sep=4pt
    },
    io/.style={
        draw,
        rounded corners,
        thick,
        align=center,
        minimum width=2.0cm,
        minimum height=0.9cm,
        inner sep=4pt,
        fill=gray!10
    },
    arrow/.style={->, thick}
]

\node[io] (input) {Input\\$A$};

\node[box, right=1.4cm of input, yshift=0.75cm] (surrogate)
{Likelihood \\ surrogate\\$\tilde{T}_{A,D}$};

\node[box, right=1.4cm of input, yshift=-0.75cm] (candidate)
{Candidate SDP\\labeling $\sigma^\star$ \\
Equation~\eqref{EqnCandSDP}};

\node[box, right=4.7cm of input] (cert)
{SDP certification\\Equation \eqref{EqnCertify}};

\node[box, right=1.6cm of cert, yshift=0.85cm] (sampler)
{Rejection sampler\\Algorithm \ref{alg:certified-sampler} or~\ref{alg:truncated-certified-sampler}};

\node[box, right=1.6cm of cert, yshift=-0.85cm] (bruteforce)
{Brute-force\\enumeration};

\node[io, right=5.0cm of cert] (output)
{Output\\$\hat{\sigma}$};

\draw[arrow] (input.east) -- (surrogate.west);
\draw[arrow] (input.east) -- (candidate.west);

\draw[arrow] (surrogate.east) -- (cert.west);
\draw[arrow] (candidate.east) -- (cert.west);

\draw[arrow] (cert.east) -- node[midway, above, sloped] {Yes} (sampler.west);
\draw[arrow] (cert.east) -- node[midway, below, sloped] {No} (bruteforce.west);

\draw[arrow] (sampler.east) -- (output.north west);
\draw[arrow] (bruteforce.east) -- (output.south west);

\end{tikzpicture}
\caption{High-level structure of the proposed node-private estimator (cf.\ Algorithm \ref{AlgOverview}).}
\label{FigAlg}
\end{figure}

%%%%%

\section{Background and problem setup}

We begin by defining the notion of differential privacy for undirected graph inputs which will be used in this paper. We also provide definitions, notation, and assumptions regarding SBMs. Additional notation is contained in Appendix~\ref{AppNotation}.

\subsection{Differential privacy}

We begin with the classical notion of $(\varepsilon, \delta)$-differential privacy, which will be denoted by $(\varepsilon, \delta)$-DP:
\begin{definition}
A randomized algorithm $\mathcal{A}$ satisfies $(\varepsilon, \delta)$-DP, for $\varepsilon, \delta \geq 0$, if for all pairs of datasets $X$ and $X'$ differing in one element and for all $S$ in the range of $\mathcal{A}$, we have $\mathbb{P}(\mathcal{A}(X) \in S) \leq e^{\varepsilon}\mathbb{P}(\mathcal{A}(X') \in S) + \delta$. If $\mathcal{A}$ satisfies $(\varepsilon, 0)$-DP, we also say it satisfies pure $\varepsilon$-DP.
\end{definition}  

Let \(\G_n\) be the set of undirected graphs on the vertex set \([n]\), represented by symmetric, zero-diagonal adjacency matrices \(A\in\{0,1\}^{n\times n}\).
Two graphs \(A,A'\in\G_n\) are \emph{node-adjacent}, written \(A\sim_v A'\), if
they differ only on edges incident to a single vertex. Let \(d_v(A,A')\) be the
shortest-path metric generated by this adjacency relation; equivalently,
\(d_v(A,A')\) is the minimum number of vertex neighborhoods that must be
rewired to transform \(A\) into \(A'\).

\begin{definition}
We say a randomized algorithm $\mathcal{A}$, evaluated on graph inputs in $\mathcal{G}_n$,  is $(\varepsilon, \delta)$-node DP if for any \(A\sim_v A'\) and measurable set $S$, we have
$\mathbb{P}(\mathcal{A}(A) \in S) \leq e^{\varepsilon}\mathbb{P}(\mathcal{A}(A') \in S) + \delta$.
\end{definition}

\subsection{Stochastic block models}

We now formally define the stochastic block model. Let $\sigma_0: [n] \rightarrow [K]$ denote the (unobserved) community assignments, where the nodes are partitioned into equal-sized communities of size $s = \frac{n}{K}$, so $\sigma_0 \in \Sigma
:=
\left\{
\sigma:[n]\to[K]:
\abs{\sigma^{-1}(k)}=s,
\; \forall k\in[K]
\right\}$.
%In other words, the $n$ nodes are divided into $K$ equally-sized communities, with within-community probability $\frac{a}{n}$ and between-community probability $\frac{b}{n}$. Let $s = \frac{n}{K}$. We will write $\sigma_0$ to denote the true community assignments:
Conditioned on $\sigma_0$, the upper-triangular entries of the adjacency matrix $A$ are generated independently according to
\[
\Pbb(A_{ij}=1\mid \sigma_0)=
\begin{cases}
 a/n, & \sigma_0(i)=\sigma_0(j),\\
 b/n, & \sigma_0(i)\ne \sigma_0(j),
\end{cases}
\]
for parameters $a,b \in \real$. This is the family of SBMs $\Theta(n,K,a,b,1)$ in the notation of~\cite{klopp2026node}.
For each \(\sigma\in\Sigma\), define the cluster matrix $Y^\sigma_{ij}
=
\1\{\sigma_i=\sigma_j\}$, for all $i,j\in[n]$.
Note that $Y^\sigma$ is invariant under relabeling of the communities.

%Define the set of exact-balanced labelings
%Fix integers \(n,K\) with \(K\mid n\), and set $s=\frac{n}{K}$. Let \(A\) be the adjacency matrix of an undirected graph on vertex set
%\([n]=\{1,\dots,n\}\). We write \(ij\in E(A)\) for an unordered edge \(\{i,j\}\) of \(A\). We work with exactly equal-size labelings:

%Since \(s\ge1\), every labeling in \(\Sigma\) uses every label.

We will be interested in measuring the accuracy of a community estimation algorithm with respect to the best community labeling. For $\sigma,\tau:[n]\to[K]$, we define the Hamming distance
$h(\sigma,\tau):=|\{i\in[n]:\sigma_i\ne\tau_i\}|$ and the permutation-invariant loss function $r(\sigma,\tau) :=\frac{\dorb([\sigma], [\tau])}{n}$, where $\dorb([\sigma],[\tau]) := \min_{\pi\in\Sn_K} h(\pi\circ\sigma,\tau)$.
%Every exact-size cluster matrix belongs to \(\cC\). Indeed, if
%\(\sigma\in\Sigma\), then \(Y^\sigma\) is %positive semidefinite because it is
%a Gram matrix of membership vectors, it has diagonal \(1\), nonnegative
%entries, and
%\[
%(Y^\sigma\one)_i
%=
%\abs{\sigma^{-1}(\sigma_i)}
%=
%s.
%\]

%A useful simplification in the exact-size case is that
%\[
%\sum_{i<j}Y_{ij}
%\]
%is constant over \(\cC\). Indeed,

We state the following assumptions on the parameters of the SBM, which are required for our analysis of the utility and runtime of our algorithms (but not the privacy guarantees):
\begin{assumption}
\label{ass:mild-K}
There exists an absolute constant $C_{\mathrm{mg}} > 0$ such that $K\log(K)\le C_{\mathrm{mg}}\log(n)$.
\end{assumption}

\begin{assumption}
\label{ass:high-signal}
There exist absolute constants $0 < \rho_- \le \rho_+ < 1$ and $A_0 > 0$ such that $\frac aK\ge A_0\log(nK)$, $a,b=o(n)$, and $ 0<\rho_-\le b/a\le \rho_+<1$.
\end{assumption}

If $I$ denotes the order-$1/2$ Renyi divergence between $\Ber(a/n)$ and $\Ber(b/n)$, then under Assumptions~\ref{ass:mild-K} and~\ref{ass:high-signal}, we have
%the non-private error exponent is of order 
%If $\frac{\mathrm{Signal}}{\log(K)} \rightarrow \infty$  (which is implied by Assumptions \ref{ass:mild-K} and \ref{ass:high-signal}), then 
$\inf_{\hat{\sigma}}\sup_{\sigma_0\in\Sigma}\Ee r(\sigma_0,\widehat\sigma) = e^{-(1 + o(1))\mathrm{Signal}}$,
where $\mathrm{Signal}:=\frac{nI}{K}\asymp \frac aK$ and the expectation is taken with respect to the randomness in $\hat{\sigma}$ and the SBM generated with $\sigma_0$~\cite{zhang2016minimax, klopp2026node}.
%Note that if $b < a = o(n)$ (again implied by Assumption \ref{ass:high-signal}), we have $\mathrm{Signal} = (1 + o(1))(\sqrt{a} - \sqrt{b})^2/K$. Furthermore, the $e^{-(1 + o(1))\mathrm{Signal}}$
The minimax rate can be attained by penalized likelihood methods~\cite{zhang2016minimax}.

%%%%%

\section{Score function}
\label{SecScore}

We introduce the critical choice of a score function for the exponential mechanism. One natural choice is the penalized likelihood score $T_A(\sigma) := \sum_{i<j}(A_{ij}-\lambda)\1\{\sigma(i)=\sigma(j)\}$, which achieves the non-private minimax rate~\cite{zhang2016minimax} and was also utilized by~\cite{klopp2026node} to obtain the private minimax rate. Note that for $\sigma \in \Sigma$, the penalty term $\lambda\sum_{i<j}\mathbf 1\{\sigma(i)=\sigma(j)\}=\lambda K\binom{n/K}{2}$ is constant; however, we retain it to remain consistent with~\cite{zhang2016minimax,klopp2026node}. For completeness, $\lambda$ is chosen as in \cite[Section 3.1]{klopp2026node}. 

Towards building an efficient algorithm, our first step is to introduce a linear programming (LP) proxy of $T_A(\sigma)$. For $D>0$ and $Y\in[0,1]^{n\times n}$, we define
\begin{equation}
\Hin_{A,D}(Y)
:=
\min_{q,z}
\left\{
\sum_{ij\in E(A)} z_{ij}Y_{ij}
+
D\sum_{i=1}^n q_i
\right\},
\label{eq:H-tilde}
\end{equation}
subject to
\begin{equation}
        0\le q_i\le 1\quad \forall i,
        \qquad
        0\le z_{ij}\le 1\quad \forall ij\in E(A),
        \qquad
        z_{ij}\ge 1-q_i-q_j\quad \forall ij\in E(A).
\label{eq:H-constraints}
\end{equation}
We also define $\Psiin_{A,D}(Y)
        :=
        \Hin_{A,D}(Y)-\lambda\sum_{i<j}Y_{ij}$, and finally, the LP proxy
$\Tin_{A,D}(\sigma):=\Psiin_{A,D}(Y^\sigma)$.

\begin{remark}
\label{lem:score-eval}
For fixed $A,D$, and $Y$, we can compute $\Hin_{A,D}(Y)$ using a linear program with $n+|E(A)|$ variables and $O(n+|E(A)|)$ constraints, so $\Tin_{A,D}(\sigma)$ can be evaluated in polynomial time.
\end{remark}

%\begin{proof}
%This is immediate from \eqref{eq:H-tilde}--\eqref{eq:H-constraints}.  The variables are $q_i$, $i\in[n]$, and $z_{ij}$, $ij\in E(A)$, and all constraints and the objective are linear.
%\end{proof}

Importantly, for ``typical" graph inputs $A$, we will show that the score function $\Tin_{A,D}$ is close to $T_A$ for near-maximizers.
Define the in-community bounded degree event, for a fixed $D > 0$, as
\begin{equation}
\cE_D
:=
\left\{
\max_{i\in[n]}
\sum_{j\ne i}A_{ij}\ind\{\sigma_0(j)=\sigma_0(i)\}
\le D
\right\},
\label{eq:true-within-degree-event}
\end{equation}
which we will later show (Lemma~\ref{lem:true-within-conc}) occurs w.h.p.\ when $A$ is sampled from an SBM. The proof of the following result is contained in Appendix~\ref{AppLemTruth}:

\begin{lemma}[Near-optimizer proximity]
\label{lem:truth-agreement}
For every graph $A$ and every $\sigma\in\Sigma$, we have
\begin{equation}
        \Tin_{A,D}(\sigma)\le T_A(\sigma).
\label{eq:one-sided}
\end{equation}
Furthermore, on $\cE_D$, we have $\Tin_{A,D}(\sigma_0) = T_A(\sigma_0)$; in particular, if $\Tin_{A,D}(\sigma)\ge\max_{\tau\in\Sigma}\Tin_{A,D}(\tau)-u$, we have $T_A(\sigma)\ge T_A(\sigma_0)-u$.
\end{lemma}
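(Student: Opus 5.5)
The plan has three steps: obtain the one-sided bound from a feasible point of the LP, show equality at $\sigma_0$ by lower-bounding the LP on $\cE_D$, and chain the two.

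\emph{Step 1: the bound \eqref{eq:one-sided}.} Since $\Tin_{A,D}(\sigma)$ and $T_A(\sigma)$ carry the same penalty $-\lambda\sum_{i<j}Y^\sigma_{ij}$, it suffices to show $\Hin_{A,D}(Y^\sigma)\le\sum_{ij\in E(A)}Y^\sigma_{ij}$. Because \eqref{eq:H-tilde} is a minimum, any feasible point gives an upper bound. I take $q\equiv 0$ and $z_{ij}=1$ for all $ij\in E(A)$. This point satisfies \eqref{eq:H-constraints}, and its objective is exactly $\sum_{ij\in E(A)}Y^\sigma_{ij}=\sum_{i<j}A_{ij}\1\{\sigma(i)=\sigma(j)\}$, which gives the bound.

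\emph{Step 2: equality at $\sigma_0$ on $\cE_D$.} By Step 1 I only need the reverse inequality $\Hin_{A,D}(Y^{\sigma_0})\ge\sum_{ij\in E(A)}Y^{\sigma_0}_{ij}$. Only within-community edges contribute, since $Y^{\sigma_0}_{ij}=0$ otherwise. Write $E_0$ for the set of within-community edges of $A$ under $\sigma_0$, and $d^0_i$ for the within-community degree of $i$. Take any feasible $(q,z)$. Using $z_{ij}\ge 1-q_i-q_j$ on $E_0$,
\[
\sum_{ij\in E_0}z_{ij}+D\sum_i q_i\;\ge\;|E_0|-\sum_{ij\in E_0}(q_i+q_j)+D\sum_i q_i\;=\;|E_0|+\sum_i q_i\,(D-d^0_i).
\]
On $\cE_D$ we have $d^0_i\le D$ for every $i$. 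Together with $q_i\ge 0$, the last sum is nonnegative, so the objective is at least $|E_0|$. This holds for every feasible point, so it holds for the minimum, and therefore $\Tin_{A,D}(\sigma_0)=T_A(\sigma_0)$. This LP-duality-style step is the only place where real work happens. The one point to check is that $z_{ij}\ge 0$ is never needed in the lower bound, since we only use $z_{ij}\ge 1-q_i-q_j$, so the possible negativity of $1-q_i-q_j$ causes no trouble.

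\emph{Step 3: the near-optimizer consequence.} Suppose $\Tin_{A,D}(\sigma)\ge\max_\tau\Tin_{A,D}(\tau)-u$. Using \eqref{eq:one-sided}, then the maximum over $\tau$, then Step 2,
\[
T_A(\sigma)\;\ge\;\Tin_{A,D}(\sigma)\;\ge\;\Tin_{A,D}(\sigma_0)-u\;=\;T_A(\sigma_0)-u,
\]
which is the claim.
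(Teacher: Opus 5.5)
Your proposal is correct and follows the paper's proof essentially line for line. The feasible point $q\equiv 0$, $z\equiv 1$ gives the upper bound; the constraint $z_{ij}\ge 1-q_i-q_j$, combined with the within-community degree bound on $\cE_D$, gives the matching lower bound at $\sigma_0$; and the near-optimizer claim follows by the same three-term chain (your restriction to $E_0$ is just the paper's use of $Y^{\sigma_0}_{ij}\ge 0$ written out explicitly).
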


Note that the conclusion of Lemma~\ref{lem:truth-agreement} is not that $\Tin_{A,D}$ is simply an ``extension" of $T_A$; the lemma merely states that $\Tin_{A,D}(\sigma_0) = T_A(\sigma_0)$ and guarantees closeness of the two functions at near-optimizers. For a more complete discussion of the connection to Lipschitz extensions, see Remark~\ref{RemLipExt} below.
Nonetheless, we show that $\Tin_{A,D}$ has bounded sensitivity, leading to a private Gibbs sampling mechanism. The proof of the following result is contained in Appendix~\ref{AppLemGlobal}:

\begin{lemma}
\label{lem:global-sensitivity}
For every pair of graphs $(A,A')$, we have $|\Tin_{A,D}(\sigma)-\Tin_{A',D}(\sigma)|
        \le
        D\,d_v(A,A')$, for all $\sigma\in\Sigma$.
Thus, the mechanism which samples from the distribution
\begin{equation}
        \pi_A(\sigma)
        :=
        \frac{\exp\{\eta\Tin_{A,D}(\sigma)\}}
        {\sum_{\tau\in\Sigma}\exp\{\eta\Tin_{A,D}(\tau)\}},
        \qquad
        \eta:=\frac{\varepsilon}{2D}.
\label{eq:gibbs-law}
\end{equation}
is pure $\varepsilon$-node-DP.
\end{lemma}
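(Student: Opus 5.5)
Since $d_v$ is a shortest-path metric, the plan is to reduce to node-adjacent graphs: if $|\Tin_{A,D}(\sigma)-\Tin_{A',D}(\sigma)|\le D$ whenever $A\sim_v A'$, the general bound follows by summing along a shortest path of length $d_v(A,A')$ and applying the triangle inequality. The penalty $\lambda\sum_{i<j}Y^\sigma_{ij}$ does not depend on $A$, so it suffices to show $|\Hin_{A,D}(Y)-\Hin_{A',D}(Y)|\le D$ for every fixed $Y=Y^\sigma\in[0,1]^{n\times n}$.

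So fix $A\sim_v A'$ differing only in edges incident to a vertex $v$, and let $(q,z)$ be an optimal solution of the LP \eqref{eq:H-tilde}--\eqref{eq:H-constraints} for $A$. Build a feasible solution $(q',z')$ for $A'$ by setting $q'_v=1$, $q'_i=q_i$ for $i\ne v$, keeping $z'_{ij}=z_{ij}$ on edges of $A'$ not incident to $v$ (these are also edges of $A$, so the constraints still hold), and setting $z'_{vj}=0$ on edges of $A'$ incident to $v$. The latter is feasible because $1-q'_v-q'_j=-q'_j\le 0$. Then the objective for $(q',z')$ is at most the objective of $(q,z)$ (we dropped only nonnegative terms $z_{vj}Y_{vj}\ge 0$ from removed edges and added zero on new ones) plus $D(1-q_v)\le D$. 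Therefore $\Hin_{A',D}(Y)\le \Hin_{A,D}(Y)+D$, and by symmetry we get the reverse inequality. Nonnegativity of $Y$ and $z$ is the only point that needs care, and here $Y\in[0,1]$ holds by assumption.

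For privacy, I will apply the standard exponential-mechanism argument. For $A\sim_v A'$, each numerator satisfies $\exp\{\eta\Tin_{A,D}(\sigma)\}\le e^{\eta D}\exp\{\eta\Tin_{A',D}(\sigma)\}$, and each term of the normalizer satisfies $\exp\{\eta\Tin_{A',D}(\tau)\}\le e^{\eta D}\exp\{\eta\Tin_{A,D}(\tau)\}$. Hence $\pi_A(\sigma)\le e^{2\eta D}\pi_{A'}(\sigma)=e^{\varepsilon}\pi_{A'}(\sigma)$ for every $\sigma$. Summing over $\sigma\in S$ gives pure $\varepsilon$-node-DP, since $\Sigma$ is finite. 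No step should pose a real obstacle. The only substantive choice is the LP modification that sets $q'_v=1$, which turns ``rewire one vertex'' into a cost of at most $D$.
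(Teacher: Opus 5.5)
Your proposal is correct and matches the paper's proof essentially step for step. Both use the same reduction to node-adjacent pairs along a shortest path, the same feasible point $q'_v=1$, $z'_{vj}=0$ on edges at $v$ (all else inherited), the same cost bound $D(1-q_v)\le D$ plus symmetry, and the standard exponential-mechanism factor $e^{2\eta D}=e^{\varepsilon}$. One small wording point: the dropped terms $z_{vj}Y_{vj}$ come from all edges of $A$ at $v$, not only the removed ones, but they are nonnegative, so your bound is unaffected.
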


The output of our private algorithm is a sample from $\pi_A$. In what follows, we describe a polynomial-time sampler and analyze its runtime and utility. Our key algorithmic idea is to decompose $\pi_A$ as a mixture of a product measure proposal step and an SDP-based acceptance filter.

\section{Efficient sampling}
\label{SecSampling}

Whereas sampling directly from the Gibbs distribution~\eqref{eq:gibbs-law} may purportedly take exponential time, we show that it is possible to construct a high-probability polynomial-time algorithm based on sequential rejection sampling: First, we use an SDP to compute a candidate labeling $\sigma^\star$, which with high probability (cf.\ Lemma~\ref{lem:eroded-cert-highprob}) is equal to $\sigma_0$. We then run another SDP to certify that $\sigma^\star$ maximizes the penalized likelihood (cf.\ Lemma~\ref{lem:eroded-sdp}), after which we can sample from $\pi_A$ using an acceptance filter. Details are provided in Algorithm~\ref{AlgOverview} (see also Figure~\ref{FigAlg}).

%\textcolor{red}{IZ: OK so there is some $\sigma^*$ input to this method. The importance of this choice is unclear to the reader until much later, no? Maybe I miss something, but why dont we start with the candidate SDP rightaway. }

\begin{comment}
\begin{enumerate}
\item Let \(\sigma^\star\in\Sigma\) be a candidate labeling and set $Y^\star=Y^{\sigma^\star}$.
%Define $L_\star(Y)=\norm{Y-Y^\star}_1$.
%Since \(Y^\star_{ij}\in\{0,1\}\) and \(0\le Y_{ij}\le1\) on \(\cC\), we have
%\[
%L_\star(Y)
%=
%\sum_{Y^\star_{ij}=1}(1-Y_{ij})
%+
%\sum_{Y^\star_{ij}=0}Y_{ij}.
%\]
%Thus, \(L_\star\) is affine on \(\cC\).
For a cluster matrix $Z$ and a parameter $\theta > 0$, define
\begin{equation*}
        V_\theta(Z)
        :=
        \max_{Y\in\cC}
        \left\{
        \Psiin_{A,D}(Y)+\frac{\theta}{n}\|Y-Z\|_1
        \right\},
\end{equation*}
where $\cC
: =
\left\{
Y\succeq0,\;
Y_{ii}=1\ \forall i,\;
Y_{ij}\ge0\ \forall i,j,\;
Y\1=s\1
\right\}$.
%Together with \(Y_{ij}\ge0\), this gives
%\[
%0\le Y_{ij}\le1
%\qquad\forall i,j.
%\]
%It is easy to check that $Y^\sigma \in \mathcal{C}$ for all $\sigma \in \Sigma$. 
\item Check if
\begin{equation}
\label{EqnCertify}
V_\theta(Y^\star) = \Psiin_{A,D}(Y^\star):
\end{equation}
\begin{itemize}
\item If equality holds, run the certified rejection sampler (Algorithm~\ref{alg:certified-sampler}) with parameter $\kappa \propto \theta$.
\item Otherwise, sample exactly from \(\pi_A\) by brute-force enumeration of
\(\Sigma\).
\end{itemize}
\end{enumerate}
\end{comment}

\begin{algorithm}[h]
\caption{Main algorithm}
\label{AlgOverview}
\begin{algorithmic}[1]
\STATE Input graph $A$, parameters $K,D,\theta,\varepsilon$, $\eta=\frac{\varepsilon}{2D}$, and $\kappa = 2\eta\theta/K$ so that $\kappa\ge \log(4n(K-1))$.
\STATE Solve the candidate SDP
\begin{equation}
\label{EqnCandSDP}
\max_X\langle A,X\rangle
\quad\text{s.t.}\quad
X\succeq0,
\quad X\one\le\one,
\quad \Tr(X)=K,
\quad X\ge0.
\end{equation}
\STATE If $s\widehat X$ is an exact-size cluster matrix, read off some $\sigma^\star\in\Sigma$ such that $Y^{\sigma^\star}=s\widehat X$.  Otherwise, choose an arbitrary $\sigma^\star\in\Sigma$. Set $Y^\star=Y^{\sigma^\star}$.
\STATE For a cluster matrix $Z$, define
$V_\theta(Z)
        :=
        \max_{Y\in\cC}
        \left\{
        \Psiin_{A,D}(Y)+\frac{\theta}{n}\|Y-Z\|_1
        \right\}$,
where $\cC
: =
\left\{
Y\succeq0,\;
Y_{ii}=1\ \forall i,\;
Y_{ij}\ge0\ \forall i,j,\;
Y\1=s\1
\right\}$.
\STATE Check if
\begin{equation}
\label{EqnCertify}
V_\theta(Y^\star) = \Psiin_{A,D}(Y^\star).
\end{equation}
\STATE If condition \eqref{EqnCertify} holds, run the certified rejection sampler (Algorithm~\ref{alg:certified-sampler}) with parameter $\kappa$.
\STATE Otherwise, sample exactly from \(\pi_A\) by brute-force enumeration of
\(\Sigma\).
\end{algorithmic}
\end{algorithm}

Theorem~\ref{thm:runtime} below shows that Algorithm~\ref{alg:certified-sampler} runs in polynomial time when the certification condition~\eqref{EqnCertify} is satisfied. The algorithm also involves checking if $\sigma$ is contained in a set $R_{\sigma^\star}$ of canonical permutations, the definition of which will be provided in Section~\ref{SecHungarian}. Only if the certification condition does not hold (which we show in Lemma~\ref{lem:eroded-cert-highprob} to be a low-probability event), do we sample exactly from $\pi_A$ by brute-force enumeration, which takes exponential time.

%defined as follows: For every orbit \([\sigma]\) under label permutations, choose the representative
%\(\tau\in[\sigma]\) minimizing $h(\tau,\sigma^\star)$,
%with deterministic tie-breaking. Let \%(\cR_{\sigma^\star}\) be the set of such
%canonical representatives. In particular, for \(\sigma \in \cR_{\sigma^\star}\), we have
%\[
%h(\sigma,\sigma^\star)
%=
%d([\sigma],[\sigma^\star]).
%\]

\begin{algorithm}[h]
\caption{Certified rejection sampler}
\label{alg:certified-sampler}
\begin{algorithmic}[1]
\STATE For each vertex $i$, independently propose a label $\sigma_i$ by
\[
\Pp(\sigma_i=\sigma_i^\star)=\frac{1}{1+(K-1)e^{-\kappa}},
\qquad
\Pp(\sigma_i=\ell)=\frac{e^{-\kappa}}{1+(K-1)e^{-\kappa}}
\quad(\ell\ne\sigma_i^\star).
\]
\STATE If $\sigma\notin\Sigma$ or $\sigma\notin R_{\sigma^\star}$, reject and restart.
\STATE Accept $\sigma$ with probability $b_A(\sigma)
        :=
        \exp\left\{
        \eta[\Tin_{A,D}(\sigma)-\Tin_{A,D}(\sigma^\star)]
        +\kappa h(\sigma,\sigma^\star)
        \right\}$.
If rejected, restart.
\STATE If $\sigma$ is accepted, draw $\Pi\sim \operatorname{Unif}(\mathfrak S_K)$ and return $\Pi\circ\sigma$.
\end{algorithmic}
\end{algorithm}

\begin{remark}
\label{remark:sampling_poly_unif_perm}
Algorithm~\ref{alg:certified-sampler} also involves sampling a uniform permutation of $K$ labels from the set $\Sn_K$. Although $|\Sn_K| = K!$, selecting a random permutation can be done in $O(K^2)$ time, assuming it is possible to generate Unif$[0,1]$ random variables in constant time.
\end{remark}

Before proceeding, we prove that the certification step can be executed in polynomial time. The proof of the following result is contained in Appendix~\ref{AppLemEroded}:

\begin{lemma}
\label{lem:eroded-sdp}
If $Z$ is a cluster matrix, then $V_\theta(Z)$ is the value of a polynomial-size SDP.
\end{lemma}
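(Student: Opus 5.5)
The plan is to rewrite the inner maximization defining $V_\theta(Z)$ as a single joint optimization in $(Y,q,z)$ that is linear in the objective and semidefinite/linear in the constraints. There are three obstacles: the $\ell_1$ term, the bilinear term $z_{ij}Y_{ij}$, and the $\min$ nested inside the $\max$.

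\emph{Step 1: the $\ell_1$ term is affine on $\cC$.} Since $Z$ is a cluster matrix, $Z_{ij}\in\{0,1\}$. Every $Y\in\cC$ satisfies $0\le Y_{ij}\le 1$. Nonnegativity is a constraint. The upper bound follows because $Y\succeq0$ and $Y_{ii}=1$ give $|Y_{ij}|\le\sqrt{Y_{ii}Y_{jj}}=1$. Hence
\[
\|Y-Z\|_1=\sum_{i,j:\,Z_{ij}=1}(1-Y_{ij})+\sum_{i,j:\,Z_{ij}=0}Y_{ij},
\]
which is an affine function of $Y$ on $\cC$. The penalty $-\lambda\sum_{i<j}Y_{ij}$ is likewise linear. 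So the only nonlinear piece of the objective is $\Hin_{A,D}(Y)$.

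\emph{Step 2: dualize the LP defining $\Hin_{A,D}(Y)$.} For fixed $Y\in[0,1]^{n\times n}$, the problem \eqref{eq:H-tilde}--\eqref{eq:H-constraints} is a feasible, bounded LP in $(q,z)$; take $q=\one$, $z=0$ for feasibility, and note the objective is nonnegative. By strong LP duality, $\Hin_{A,D}(Y)$ therefore equals the value of its dual, a maximization over multipliers $(\alpha,\beta,\gamma)\ge0$, where $\alpha_{ij}$ is attached to $z_{ij}\ge1-q_i-q_j$, $\beta_{ij}$ to $z_{ij}\le1$, and $\gamma_i$ to $q_i\le1$. The dual reads
\[
\Hin_{A,D}(Y)=\max\Big\{\sum_{ij\in E(A)}(\alpha_{ij}-\beta_{ij})-\sum_i\gamma_i\ :\ \alpha_{ij}-\beta_{ij}\le Y_{ij},\ \ \sum_{j:\,ij\in E(A)}\alpha_{ij}-\gamma_i\le D,\ \ \alpha,\beta,\gamma\ge0\Big\}.
\]
The key structural point is that $Y$ enters this dual only in the right-hand side of the linear constraints $\alpha_{ij}-\beta_{ij}\le Y_{ij}$. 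Thus the max--min becomes a max--max.

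\emph{Step 3: merge into one SDP.} We then have
\[
V_\theta(Z)=\max_{Y,\alpha,\beta,\gamma}\Big\{\sum_{ij\in E(A)}(\alpha_{ij}-\beta_{ij})-\sum_i\gamma_i-\lambda\sum_{i<j}Y_{ij}+\frac{\theta}{n}\ell_Z(Y)\Big\},
\]
where $\ell_Z$ is the affine expression from Step 1. The constraints are $Y\in\cC$ together with the dual constraints, which are jointly linear in $(Y,\alpha,\beta,\gamma)$. This is an SDP with one $n\times n$ PSD block, $O(n^2+|E(A)|)$ scalar variables, and $O(n^2+|E(A)|)$ linear constraints, so it has polynomial size.

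The one step needing care is the exchange in Step 2. I expect this to be the main, though mild, obstacle. It requires strong duality to hold for every $Y$ in $\cC$, which follows from feasibility and boundedness uniformly in $Y\in[0,1]^{n\times n}$. It also requires the outer maximum to be attained, which holds because $\cC$ is compact and $\Hin_{A,D}$ is continuous (a minimum of affine functions of $Y$).

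As an alternative route, one could avoid duality by noting that $\Hin_{A,D}$ is a concave piecewise-linear function of $Y$, expressible as a min over the finitely many LP vertices $(q,z)$. However, that representation has exponentially many pieces, so the dual formulation of Step 2 is the right choice for a polynomial-size program.
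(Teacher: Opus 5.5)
Your proposal is correct and follows essentially the same route as the paper: the $\ell_1$ term is affine on $\cC$ because $0\le Y_{ij}\le 1$ and $Z$ has $0/1$ entries, and $\Hin_{A,D}(Y)$ is replaced by its LP dual. In that dual, $Y$ enters only through the right-hand side of linear constraints, so everything collapses into a single polynomial-size SDP. The only cosmetic difference is that you place the dual objective directly in the objective, whereas the paper introduces a hypograph variable $t$ constrained by $t\le$ the dual objective; the two formulations are equivalent.
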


In the subsections that follow, we analyze each step in turn. The main result is that, for an appropriate choice of $\theta$, success of the certification condition ensures that the output of Algorithm~\ref{alg:certified-sampler} samples from the target distribution, and can be executed in (expected) polynomial time.

\begin{remark}
For numerical implementation, the exact equality of SDP optima in condition~\eqref{EqnCertify} can be delicate. One could instead allow a slack $\rho>0$ and require $V_\theta(Y^\star) \le \Psiin_{A,D}(Y^\star) + \rho$. The analysis with such a slack is analogous to what follows. In fact, with high probability, condition~\eqref{EqnCertify} can be satisfied exactly. Since our focus is improving computational complexity rather than the mechanics of numerical optimization, we use the exact condition~\eqref{EqnCertify}.
\end{remark}

%%%%%

\subsection{Canonical permutations}
\label{SecHungarian}

For $b_A(\sigma)$ to be a valid acceptance probability, we need
$\eta[\Tin_{A,D}(\sigma)-\Tin_{A,D}(\sigma^\star)]
+\kappa h(\sigma,\sigma^\star)\le 0$.
However, this inequality cannot hold over all of \(\Sigma\), since permuting the
community labels of \(\sigma^\star\) leaves \(\Tin_{A,D}\) unchanged, but may
produce a labeling with a large Hamming distance from \(\sigma^\star\). To remedy this issue, Algorithm~\ref{alg:certified-sampler} restricts attention to a set of permutations $R_{\sigma^\star}$, which we now define.
%In Lemma~\ref{lem:certificate-margin} below, we prove that $\Tin_{A,D}$ has the required curvature on permutations in $R_{\sigma^*}$.
For \(\sigma\in\Sigma\), define the set of canonical representatives
\[
        \operatorname{Can}_{\sigma^\star}(\sigma)
        :=
        \text{the lexicographically smallest element of }
        \arg\min_{\pi\in\mathfrak S_K}
        h(\pi\circ\sigma,\sigma^\star),
\]
where lexicographic order $<_{\rm lex}$ means that
for \(\tau,\tau'\in[K]^n\), we have \(\tau<_{\rm lex}\tau'\) if, at the first
coordinate \(i\) for which \(\tau_i\ne\tau'_i\), one has \(\tau_i<\tau'_i\).
Define $R_{\sigma^\star} :=
        \{\sigma\in\Sigma:
        \operatorname{Can}_{\sigma^\star}(\sigma)=\sigma\}$.

The following result, proved in Appendix~\ref{AppLemRomanian}, shows that it is possible to check if a labeling is contained in $R_{\sigma^\star}$ in polynomial-time. It involves an application of the Hungarian algorithm~\cite{papadimitriou1998combinatorial}. 

\begin{lemma}
\label{lem:hungarian-canon}
If \(\sigma\in R_{\sigma^\star}\), we have $h(\sigma,\sigma^\star) = \dorb([\sigma],[\sigma^\star])$.
For any $\sigma \in \Sigma$, the representative \(\operatorname{Can}_{\sigma^\star}(\sigma)\), and hence membership in
\(R_{\sigma^\star}\), can be computed in polynomial time.
\end{lemma}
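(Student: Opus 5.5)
The first claim follows directly from the definition of $\operatorname{Can}_{\sigma^\star}$. Note that $\Sigma$ is closed under label permutations, so $\pi\circ\sigma\in\Sigma$ for every $\pi\in\Sn_K$. If $\sigma\in R_{\sigma^\star}$, then $\sigma=\operatorname{Can}_{\sigma^\star}(\sigma)=\pi^\ast\circ\sigma$ for some $\pi^\ast\in\arg\min_{\pi}h(\pi\circ\sigma,\sigma^\star)$. Hence
\[
h(\sigma,\sigma^\star)=\min_{\pi\in\Sn_K}h(\pi\circ\sigma,\sigma^\star).
\]
Since $h(\pi\circ\sigma,\sigma^\star)$ counts the $i$ with $\pi(\sigma_i)\ne\sigma^\star_i$, and this count is symmetric in its two arguments, the minimum equals $\dorb([\sigma],[\sigma^\star])$.

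For the computational claim, I would reduce the minimization over $\Sn_K$ to an assignment problem. Form the $K\times K$ confusion matrix $M_{k\ell}:=|\{i:\sigma_i=k,\ \sigma^\star_i=\ell\}|$ in $O(n+K^2)$ time. Then
\[
h(\pi\circ\sigma,\sigma^\star)=n-\sum_k M_{k,\pi(k)},
\]
so minimizing Hamming distance is the same as a maximum-weight perfect matching in the complete bipartite graph with weights $M$. The Hungarian algorithm returns one optimal $\pi$, and the optimal value $m^\ast$, in $O(K^3)$ time.

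The main obstacle is that $\operatorname{Can}_{\sigma^\star}(\sigma)$ is the lexicographically smallest labeling among \emph{all} optimal permutations, and there may be exponentially many of these. I would handle this by greedy fixing with feasibility checks. Relabeling by $\pi$ writes label $\pi(\sigma_i)$ at coordinate $i$, so the lexicographic order of $\pi\circ\sigma$ is determined by the values $\pi(k)$ for the labels $k$ in order of their first appearance in $\sigma$. Let this order be $k_1,k_2,\dots$, where $k_t$ is the label of the first coordinate not covered by $k_1,\dots,k_{t-1}$; every label appears since $\sigma\in\Sigma$.

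Proceed through $t=1,\dots,K$. At step $t$, for each candidate value $v\in[K]$ not yet used, in increasing order, decide whether some optimal matching extends the already-fixed assignments together with $\pi(k_t)=v$. To decide this, solve the assignment problem with the fixed pairs forced, for example by deleting those rows and columns and adding back their weights, and test whether the resulting optimum equals $m^\ast$. Take the first $v$ that passes. The invariant that an optimal completion exists is maintained at every step, so the procedure ends at an optimal $\pi$. Because the first-appearance positions are increasing, choosing the smallest feasible $v$ at each step produces exactly the lexicographic minimizer of $\pi\circ\sigma$ over $\arg\min_\pi h(\pi\circ\sigma,\sigma^\star)$.

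This takes at most $K^2$ Hungarian calls, each $O(K^3)$, for $O(K^5+n)$ time overall, which is polynomial. Membership $\sigma\in R_{\sigma^\star}$ is then decided by comparing $\operatorname{Can}_{\sigma^\star}(\sigma)$ with $\sigma$ in $O(n)$ time.
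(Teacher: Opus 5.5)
Your proof is correct. The first claim matches the paper. For the tie-breaking step you take a different route from the paper.

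\textbf{The paper's route.} The paper handles the lexicographic rule with a single assignment problem. It sets $B=2K+1$, $L_a(\sigma)=\sum_{i:\sigma_i=a}B^{n-i}$ and $W_{ab}=B^{n+1}N_{ab}-b\,L_a(\sigma)$. Then $\sum_a W_{a,\pi(a)}=B^{n+1}A(\pi)-\sum_i(\pi\circ\sigma)_iB^{n-i}$, where $A(\pi)$ is the number of agreements. The huge leading coefficient forces optimal agreement first, and the base-$B$ expansion of $\pi\circ\sigma$ then breaks ties lexicographically. So one Hungarian call returns $\operatorname{Can}_{\sigma^\star}(\sigma)$ directly.

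\textbf{Your route.} You keep the natural weights $N_{ab}\le n$. You observe that the lexicographic order of $\pi\circ\sigma$ coincides with that of $(\pi(k_1),\dots,\pi(k_K))$ in first-appearance order. You then run greedy lexicographic minimization, using forced-assignment solves as a feasibility oracle.

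\textbf{Comparison.} The paper's encoding is one-shot, but it needs arithmetic on integers with $\Theta(n\log K)$ bits. Yours costs $O(K^2)$ assignment solves but is strongly polynomial, with only $O(\log n)$-bit numbers. Incidentally, your first-appearance observation would also let the paper's encoding use only $K$ base-$B$ digits instead of $n$, via weights $B^{K+1}N_{k_t b}-b\,B^{K-t}$.

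Two small remarks.
\begin{itemize}
\item By the paper's definition, $\dorb([\sigma],[\sigma^\star])$ is literally $\min_{\pi\in\Sn_K}h(\pi\circ\sigma,\sigma^\star)$, so the appeal to symmetry of $h$ is unnecessary.
\item The paper's proof also records that $\operatorname{Can}_{\sigma^\star}(\rho\circ\sigma)=\operatorname{Can}_{\sigma^\star}(\sigma)$ for every $\rho\in\Sn_K$, since $\rho\circ\sigma$ and $\sigma$ have the same orbit. Hence $R_{\sigma^\star}$ contains exactly one element of each orbit. This is not in the displayed statement, but the proof of Lemma~\ref{lem:sampler-exact} cites this lemma for exactly that fact. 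You should add the one-line argument so your proof supports that later use.
\end{itemize}
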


%%%%%

\subsection{Certified rejection sampler}

We now turn to the analysis of Algorithm~\ref{alg:certified-sampler}.
Assume a candidate $\sigma^\star\in\Sigma$ satisfies the certificate \eqref{EqnCertify} with $Y^\star = Y^{\sigma^\star}$.
%Define $\Theta:=\frac{2\theta}{K}$ and $\kappa:=\eta\Theta$.
We begin with a technical lemma, proved in Appendix~\ref{AppLemGeom}, concerning the geometry of the space of cluster matrices:

\begin{lemma}
\label{lem:geometry}
For every $\sigma,\tau\in\Sigma$, we have $\|Y^\sigma-Y^\tau\|_1
        \ge
        \frac{2n}{K}\dorb([\sigma],[\tau])$.
\end{lemma}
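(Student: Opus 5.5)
The plan is to reduce the inequality to a statement about the $K\times K$ confusion matrix of $(\sigma,\tau)$, and then prove that statement with the Birkhoff--von Neumann theorem. Throughout, $\|\cdot\|_1$ is the entrywise $\ell_1$ norm over all ordered pairs $(i,j)$.

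\emph{Reduction to the confusion matrix.} Define $N_{kl}:=|\sigma^{-1}(k)\cap\tau^{-1}(l)|$ for $k,l\in[K]$. Since $\sigma,\tau\in\Sigma$, every row sum and every column sum of $N$ equals $s=n/K$. Both $Y^\sigma$ and $Y^\tau$ have $0/1$ entries, so $\|Y^\sigma-Y^\tau\|_1=\sum_{ij}Y^\sigma_{ij}+\sum_{ij}Y^\tau_{ij}-2\sum_{ij}Y^\sigma_{ij}Y^\tau_{ij}$. Each of the first two sums equals $Ks^2=ns$. For the third, $Y^\sigma_{ij}Y^\tau_{ij}=1$ exactly when $i,j$ lie in a common cell $\sigma^{-1}(k)\cap\tau^{-1}(l)$, so it equals $\sum_{k,l}N_{kl}^2$. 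This gives
\[
\|Y^\sigma-Y^\tau\|_1 = 2\Bigl(ns-\sum_{k,l}N_{kl}^2\Bigr).
\]
On the other side, $\dorb([\sigma],[\tau]) = n-\max_{\pi\in\Sn_K} f(\pi)$, where $f(\pi):=\sum_k N_{k\pi(k)}$ counts the vertices on which $\pi\circ\sigma$ agrees with $\tau$ (up to the harmless choice of composing $\pi$ or $\pi^{-1}$). The claim is therefore equivalent to
\[
\sum_{k,l}N_{kl}^2 \le s\max_\pi f(\pi).
\]

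\emph{Key step.} The matrix $N/s$ is doubly stochastic, so by Birkhoff--von Neumann we can write $N=\sum_{\pi} w_\pi P_\pi$ with weights $w_\pi\ge 0$ satisfying $\sum_\pi w_\pi = s$. Here $P_\pi$ is the permutation matrix of $\pi$. Then
\[
\sum_{k,l}N_{kl}^2=\langle N,N\rangle=\sum_\pi w_\pi\langle N,P_\pi\rangle=\sum_\pi w_\pi f(\pi)\le s\max_\pi f(\pi),
\]
which is exactly the required inequality.

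\emph{Where the difficulty lies.} Once the reformulation is in hand, the argument is short, so the main obstacle is finding the right bound on $\sum N_{kl}^2$. Cruder approaches fail. Bounding row by row, $\sum_l N_{kl}^2\le s\max_l N_{kl}$, breaks down because the row maxima need not lie on a single permutation. Greedy matching of entries larger than $s/2$ loses a factor of $2$. The uniform case $N_{kl}\equiv s/K$ attains equality, so no slack is available and the bound must be exact. Writing $N$ as an average of permutation matrices delivers precisely this. The remaining care is bookkeeping: the $1$-norm runs over ordered pairs, which accounts for the factor $2$, and the diagonal entries cancel because $Y^\sigma_{ii}=Y^\tau_{ii}=1$.
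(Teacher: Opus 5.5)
Your proof is correct and follows essentially the same route as the paper: both write $\|Y^\sigma-Y^\tau\|_1$ in terms of the squared entries of the confusion matrix, then use a Birkhoff--von Neumann decomposition to bound $\sum_{k,l}N_{kl}^2$ by $s$ times the best permutation overlap. The differences are cosmetic. The paper first relabels $\sigma$ so that the identity is the optimal alignment and works with the normalized matrix $N/n$, whereas you keep $\max_\pi f(\pi)$ explicit and use unnormalized counts.
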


Using Lemma~\ref{lem:geometry}, we show that success of the certification condition implies a score margin, leading to a valid acceptance probability. The following result is proved in Appendix~\ref{AppCertMar}:

\begin{lemma}[SDP certification implies stable LP likelihood maximization]
\label{lem:certificate-margin}
Suppose $\sigma^\star\in\Sigma$ satisfies
$V_\theta(Y^{\sigma^\star})=\Psiin_{A,D}(Y^{\sigma^\star})$.
Then for every $\sigma\in\Sigma$, we have
\begin{equation}
\label{eq:certified-margin-theta}
        \Tin_{A,D}(\sigma^\star)-\Tin_{A,D}(\sigma)
        \ge
        \frac{2\theta}{K}\,\dorb([\sigma],[\sigma^\star]).
\end{equation}
In particular, if $\kappa = \frac{2\eta\theta}{K}$, we have $b_A(\sigma) \le 1$ for every $\sigma \in \Sigma \cap R_{\sigma^\star}$.
\end{lemma}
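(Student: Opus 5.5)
The plan is to plug the integral matrix $Y^\sigma$ into the maximization defining $V_\theta(Y^{\sigma^\star})$, and then convert the resulting $\ell_1$ distance between cluster matrices into an orbit distance using Lemma~\ref{lem:geometry}.

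First I check that $Y^\sigma\in\cC$ for every $\sigma\in\Sigma$. The matrix $Y^\sigma$ is a Gram matrix of the one-hot membership vectors $e_{\sigma(i)}\in\real^K$, so it is positive semidefinite. Its diagonal equals $1$, and its entries are nonnegative. Its row sums are $(Y^\sigma\1)_i=|\sigma^{-1}(\sigma_i)|=s$. Since $Y^\sigma$ is feasible for the maximization defining $V_\theta(Y^{\sigma^\star})$, the certificate gives
\[
\Psiin_{A,D}(Y^{\sigma^\star}) = V_\theta(Y^{\sigma^\star}) \ge \Psiin_{A,D}(Y^\sigma)+\frac{\theta}{n}\|Y^\sigma-Y^{\sigma^\star}\|_1 .
\]
By definition, $\Tin_{A,D}(\sigma)=\Psiin_{A,D}(Y^\sigma)$. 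Applying Lemma~\ref{lem:geometry} to the $\ell_1$ term then gives
\[
\Tin_{A,D}(\sigma^\star)-\Tin_{A,D}(\sigma)\ge \frac{\theta}{n}\cdot\frac{2n}{K}\,\dorb([\sigma],[\sigma^\star]) = \frac{2\theta}{K}\,\dorb([\sigma],[\sigma^\star]),
\]
which is \eqref{eq:certified-margin-theta}.

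For the acceptance probability, take $\sigma\in\Sigma\cap R_{\sigma^\star}$. By Lemma~\ref{lem:hungarian-canon}, $h(\sigma,\sigma^\star)=\dorb([\sigma],[\sigma^\star])$. Multiplying \eqref{eq:certified-margin-theta} by $\eta>0$ and using $\kappa=2\eta\theta/K$, we get
\[
\eta[\Tin_{A,D}(\sigma)-\Tin_{A,D}(\sigma^\star)]+\kappa h(\sigma,\sigma^\star)\le -\kappa\,\dorb([\sigma],[\sigma^\star])+\kappa\,\dorb([\sigma],[\sigma^\star])=0.
\]
Hence $b_A(\sigma)\le 1$.

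The argument is essentially bookkeeping, so no step is a serious obstacle. The only points to verify with care are that $\cC$ really contains every exact-size cluster matrix (in particular the row-sum constraint $Y\1=s\1$, which is where exact balance is needed), and that the $\ell_1$ norm used in $V_\theta$ is the entrywise sum over all ordered pairs $(i,j)$. Lemma~\ref{lem:geometry} must be stated with that same normalization, since a factor of $2$ from counting unordered versus ordered pairs would change the constant $2\theta/K$.
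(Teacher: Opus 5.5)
Your proposal is correct and follows the paper's proof essentially step for step. You plug $Y^\sigma$ into the certificate and apply Lemma~\ref{lem:geometry} to get the margin, then use Lemma~\ref{lem:hungarian-canon} with $\kappa=2\eta\theta/K$ to get $b_A(\sigma)\le 1$. Your extra checks that $Y^\sigma\in\cC$ and that the entrywise $\ell_1$ normalization matches Lemma~\ref{lem:geometry} are points the paper leaves implicit, and both hold.
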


%By Lemma \ref{lem:certificate-margin},
%\begin{equation}
%        \Tin_{A,D}(\sigma^\star)-\Tin_{A,D}(\sigma)
%        \ge
%        \Theta\dorb([\sigma],[\sigma^\star])
%        \qquad\forall\sigma\in\Sigma.
%\label{eq:certified-margin-theta}
%\end{equation}

%By Lemma \ref{lem:certificate-margin}, we have
%\begin{equation}
%        \Tin_{A,D}(\sigma^\star)-\Tin_{A,D}(\sigma)
%        \ge
%        \Theta\dorb([\sigma],[\sigma^\star])
%        \qquad\forall\sigma\in\Sigma.
%\label{eq:certified-margin-theta}
%\end{equation}

%We now show that the acceptance probability is indeed at most 1:

%\begin{lemma}
%\label{lem:accept-valid}
%If equation~\eqref{eq:certified-margin-theta} holds and $\kappa=\eta\Theta$, then $b_A(\sigma)\le1$ for every proposed $\sigma\in\Sigma\cap R_{\sigma^\star}$.
%\end{lemma}

%\begin{proof}
%For $\sigma\in R_{\sigma^\star}$, Lemma \ref{lem:hungarian-canon} gives $h(\sigma,\sigma^\star)=\dorb([\sigma],[\sigma^\star])$. Thus, inequality~\eqref{eq:certified-margin-theta} gives
%\[
%        \Tin_{A,D}(\sigma)-\Tin_{A,D}(\sigma^\star)
%        \le
%        -\Theta h(\sigma,\sigma^\star).
%\]
%Multiplying by $\eta$ and using $\kappa=\eta\Theta$ yields
%\[
%        \eta[\Tin_{A,D}(\sigma)-\Tin_{A,D}(\sigma^\star)]
%        +\kappa h(\sigma,\sigma^\star)
%        \le 0.
%\]
%Hence, $b_A(\sigma)\le1$.
%\end{proof}

Next, we show that the output of Algorithm~\ref{alg:certified-sampler} matches the desired Gibbs distribution, with a constant bound on the expected number of steps. The following result is proved in Appendix~\ref{AppLemExact}:

\begin{lemma}
\label{lem:sampler-exact}
Under the hypotheses of Lemma \ref{lem:certificate-margin}, Algorithm \ref{alg:certified-sampler} outputs an exact sample from the Gibbs law \eqref{eq:gibbs-law}, and, if $\kappa \ge \log(4n(K-1))$, the expected number of proposal trials is at most $e^{1/4}$.
\end{lemma}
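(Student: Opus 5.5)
We first compute the law of a single trial of Algorithm~\ref{alg:certified-sampler}, then handle the final uniform relabeling, and finally bound the acceptance probability.

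\textbf{Law of one trial.} Write $Z_\kappa := 1+(K-1)e^{-\kappa}$. The proposal is a product measure, so its mass at $\sigma\in[K]^n$ is
\[
q(\sigma)=Z_\kappa^{-n}e^{-\kappa h(\sigma,\sigma^\star)}.
\]
For $\sigma\in\Sigma\cap R_{\sigma^\star}$ we multiply by $b_A(\sigma)$, which lies in $[0,1]$ by Lemma~\ref{lem:certificate-margin}. The factor $e^{\kappa h}$ in $b_A$ cancels the proposal weight, so a single trial proposes and accepts $\sigma$ with probability
\[
Z_\kappa^{-n}\exp\{\eta[\Tin_{A,D}(\sigma)-\Tin_{A,D}(\sigma^\star)]\}\,\ind\{\sigma\in\Sigma\cap R_{\sigma^\star}\}.
\]
Restarts are independent, so the accepted $\sigma$ has law proportional to $e^{\eta\Tin_{A,D}(\sigma)}$ on $R_{\sigma^\star}$.

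\textbf{Uniform relabeling.} Each orbit $[\sigma]\subseteq\Sigma$ has exactly one element of $R_{\sigma^\star}$, namely its canonical representative. This holds because $\operatorname{Can}_{\sigma^\star}$ is constant on orbits: for $\tau=\rho\circ\sigma$ the set $\{\pi\circ\tau:\pi\in\Sn_K\}$ equals the orbit of $\sigma$. The score $\Tin_{A,D}(\sigma)=\Psiin_{A,D}(Y^\sigma)$ is constant on orbits because $Y^\sigma$ is. Since all labels are used, $\pi\mapsto\pi\circ\sigma$ is a bijection from $\Sn_K$ onto $[\sigma]$, so $\Pi\circ\sigma$ is uniform on the orbit. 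Therefore the output $\tau$ has probability proportional to $e^{\eta\Tin_{A,D}(\tau)}/K!$ for every $\tau\in\Sigma$, which is $\pi_A$.

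\textbf{Expected number of trials.} The per-trial acceptance probability is
\[
p=Z_\kappa^{-n}\sum_{\sigma\in\Sigma\cap R_{\sigma^\star}}e^{\eta[\Tin_{A,D}(\sigma)-\Tin_{A,D}(\sigma^\star)]}.
\]
We bound it below by the single term $\sigma=\sigma^\star$, which lies in $R_{\sigma^\star}$. This step is the main subtlety. By Lemma~\ref{lem:hungarian-canon}, any $\sigma\in R_{\sigma^\star}$ satisfies $h(\sigma,\sigma^\star)=\dorb([\sigma],[\sigma^\star])$. For $\sigma$ in the orbit of $\sigma^\star$ this forces $h(\sigma,\sigma^\star)=0$, i.e.\ $\sigma=\sigma^\star$, so $\sigma^\star$ is its own canonical representative. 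That term contributes exactly $1$, giving $p\ge Z_\kappa^{-n}$.

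The expected number of trials is geometric with mean $1/p$, so
\[
\frac1p\le Z_\kappa^n=\bigl(1+(K-1)e^{-\kappa}\bigr)^n\le \exp\{n(K-1)e^{-\kappa}\}.
\]
If $\kappa\ge\log(4n(K-1))$, then $n(K-1)e^{-\kappa}\le 1/4$, so the expected number of trials is at most $e^{1/4}$.
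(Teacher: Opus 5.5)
Your proposal is correct and matches the paper's proof step for step. The paper also cancels the proposal weight $e^{-\kappa h(\sigma,\sigma^\star)}$ against the $e^{\kappa h}$ factor in $b_A$, uses geometric restarts to obtain the Gibbs law on canonical representatives, spreads it over each $K!$-element orbit with the uniform relabeling, and lower-bounds the acceptance probability by the $\sigma=\sigma^\star$ term. Your justification that $\sigma^\star\in R_{\sigma^\star}$ (via $h(\sigma,\sigma^\star)=\dorb([\sigma],[\sigma^\star])=0$ on the orbit of $\sigma^\star$) is a useful addition, since the paper only asserts that $\sigma^\star$ is canonical.
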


\section{Analysis of SBMs}
\label{SecSBMAnalysis}

In this section, we will show that with high probability over the distribution of adjacency matrices from an SBM, it is possible to produce a candidate labeling $\sigma^\star$ for which the certification step succeeds. The analysis is based on a result from~\cite{pirinen2019exact} for exact recovery for SBMs via the SDP~\eqref{EqnCandSDP}. This leads to an overall guarantee that, with high probability over the inputs, Algorithm~\ref{AlgOverview} has polynomial expected runtime. We also show that the output of Algorithm~\ref{AlgOverview} retains the minimax optimal private misclassification rate achieved by the exponential-time mechanism in~\cite{klopp2026node}.

%We first state the algorithm (Algorithm~\ref{alg:full}), followed by the analysis of its runtime and utility.

%\begin{algorithm}[h]
%\caption{Node-private sampler}
%\label{alg:full}
%\begin{algorithmic}[1]
%\STATE Input graph $A$, parameters $K,D,\theta,\varepsilon$, and $\eta=\frac{\varepsilon}{2D}$.
%\STATE Solve the candidate SDP
%\[
%\max_X\langle A,X\rangle
%\quad\text{s.t.}\quad
%X\succeq0,
%\quad X\one\le\one,
%\quad \Tr(X)=K,
%\quad X\ge0.
%\]
%\STATE If $s\widehat X$ is an exact-size cluster matrix, read off some $\sigma^\star\in\Sigma$ such that $Y^{\sigma^\star}=s\widehat X$.  Otherwise, choose an arbitrary $\sigma^\star\in\Sigma$.
%\STATE Compute $V_\theta(Y^{\sigma^\star})$ using the SDP from Lemma \ref{lem:eroded-sdp}.
%\STATE If $V_\theta(Y^{\sigma^\star})=\Psiin_{A,D}(Y^{\sigma^\star})$,
%and $\kappa=\frac{2\eta\theta}{K}$ satisfies $\kappa\ge \log(4n(K-1))$,
%run Algorithm \ref{alg:certified-sampler} with $\kappa=\frac{2\eta\theta}{K}$.
%\STATE Otherwise, sample exactly from $\pi_A$ by brute-force enumeration of $\Sigma$.
%\end{algorithmic}
%\end{algorithm}

%%%%%

%\subsection{Preliminary results}
\subsection{High-probability expected runtime}

The following result, proved in Appendix~\ref{AppLemTrue}, lower-bounds the probability of $\cE_D$ (cf.\ equation~\eqref{eq:true-within-degree-event}):

\begin{lemma}
\label{lem:true-within-conc}
There are constants $C_{deg},c_{deg}>0$ such that if $D=C_{deg}\frac aK$ and Assumption \ref{ass:high-signal} holds with $A_0$ sufficiently large, we have $\Pp_{\sigma_0}(\cE_D^c)\le(nK)^{-c_{deg} A_0}$.
\end{lemma}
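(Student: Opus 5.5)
The plan is a Chernoff bound for each vertex's within-community degree, followed by a union bound over the $n$ vertices. Fix $i\in[n]$ and condition on $\sigma_0$. The within-community degree
\[
d_i^{\mathrm{in}}:=\sum_{j\ne i}A_{ij}\ind\{\sigma_0(j)=\sigma_0(i)\}
\]
is a sum of $s-1$ independent $\Ber(a/n)$ variables. Its mean is therefore $\mu:=(s-1)a/n\le a/K$, because $s=n/K$.

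We use the standard multiplicative Chernoff bound: for a sum $X$ of independent Bernoulli variables with $\Ee X\le\mu_+$ and any $t\ge 2e\mu_+$, we have $\Pp(X\ge t)\le 2^{-t}$. More generally, for $c\ge 1$,
\[
\Pp\bigl(X\ge c\mu_+\bigr)\le \exp\{-\mu_+ h(c)\},\qquad h(c)=c\log c-c+1.
\]
We apply this with $\mu_+=a/K$ and $t=D=C_{deg}\,a/K$. If $C_{deg}\ge 2e$, this gives
\[
\Pp(d_i^{\mathrm{in}}>D)\le \exp\bigl\{-c_1 C_{deg}\, a/K\bigr\},\qquad c_1=\log 2,
\]
and any fixed $C_{deg}>1$ works if we use $h(C_{deg})$ in place of $c_1C_{deg}$.

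Next, Assumption~\ref{ass:high-signal} gives $a/K\ge A_0\log(nK)$. Substituting this into the bound above yields
\[
\Pp(d_i^{\mathrm{in}}>D)\le (nK)^{-c_1C_{deg}A_0}.
\]
A union bound over the $n$ vertices then gives
\[
\Pp_{\sigma_0}(\cE_D^c)\le n\,(nK)^{-c_1C_{deg}A_0}\le (nK)^{1-c_1C_{deg}A_0}.
\]

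The only point needing care is absorbing the factor $n$ coming from the union bound. For this we take $A_0$ large enough, say $c_1C_{deg}A_0\ge 2$. Then $1-c_1C_{deg}A_0\le -\tfrac12 c_1C_{deg}A_0$, so the claim holds with $c_{deg}:=c_1C_{deg}/2$. This is exactly why the statement requires $A_0$ to be sufficiently large.

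The bound is uniform in $\sigma_0\in\Sigma$, because only the community size $s$ enters the argument. No step here is a real obstacle; the only choices to make are the constants $C_{deg}$ (for instance $2e$) and $c_{deg}$.
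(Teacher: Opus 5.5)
Your proof is correct and follows essentially the same route as the paper. Both apply a Chernoff bound to each vertex's within-community degree, which is a sum of $s-1$ independent $\Ber(a/n)$ variables with mean at most $a/K$, at the threshold $D=C_{deg}a/K$, then take a union bound over the $n$ vertices and absorb the factor $n$ into $(nK)^{-c_{deg}A_0}$; your explicit choices $C_{deg}=2e$, $c_{deg}=\tfrac12 C_{deg}\log 2$ and $c_1C_{deg}A_0\ge 2$ simply spell out the paper's ``absorbing constants'' step.
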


The following result, proved in Appendix~\ref{AppErodedCert}, shows that the certification condition~\eqref{EqnCertify} is satisfied with high probability:

\begin{lemma}
\label{lem:eroded-cert-highprob}
Let $D=C_{deg}\frac aK$ and $\theta=\chi a$, where $C_{deg}$ is sufficiently large and $\chi>0$ is chosen small enough that $a-b-4\theta\ge c_\rho a$, for some constant $c_\rho>0$ depending only on $\rho_+$. Under Assumptions \ref{ass:mild-K} and \ref{ass:high-signal}, with $A_0$ sufficiently large, the true cluster matrix $Y^{\sigma_0}$ satisfies the certification condition~\eqref{EqnCertify}, with probability at least $1-n^{-c} - (nK)^{-cA_0}$.
%\begin{equation}
%        V_\theta(Y^0)=\Psiin_{A,D}(Y^0).
%\label{eq:true-cert}
%\end{equation}
\end{lemma}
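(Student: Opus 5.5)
Since $\|Y-Y^0\|_1=\sum_{Y^0_{ij}=1}(1-Y_{ij})+\sum_{Y^0_{ij}=0}Y_{ij}$ is affine on $\cC$, the certification condition \eqref{EqnCertify} with $Z=Y^0:=Y^{\sigma_0}$ says exactly that $Y^0$ maximizes the concave function
\[
\Psiin_{A,D}(Y)+\frac{\theta}{n}\|Y-Y^0\|_1
\]
over $\cC$. My plan is to reduce this to the certification of $Y^0$ for a \emph{linear} objective with a tilted matrix, and then reuse dual-certificate arguments of the type in \cite{pirinen2019exact}.

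\textbf{Step 1 (replace the LP proxy by a linear upper bound that is tight at $Y^0$).} On $\cE_D$ (probability at least $1-(nK)^{-c_{deg}A_0}$ by Lemma~\ref{lem:true-within-conc}), take the feasible LP point $q\equiv 0$ and $z\equiv 1$ in \eqref{eq:H-tilde}. This gives $\Hin_{A,D}(Y)\le\sum_{ij\in E(A)}Y_{ij}$ for every $Y$. Lemma~\ref{lem:truth-agreement} gives $\Hin_{A,D}(Y^0)=\sum_{ij\in E(A)}Y^0_{ij}$, since the $\lambda$ terms agree. Hence it suffices to show that $Y^0$ maximizes the linear functional
\[
L(Y):=\sum_{i<j}(A_{ij}-\lambda)Y_{ij}+\frac{\theta}{n}\Bigl[\sum_{Y^0_{ij}=1}(1-Y_{ij})+\sum_{Y^0_{ij}=0}Y_{ij}\Bigr]
\]
over $\cC$. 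Indeed, $V_\theta(Y^0)\le\max_{\cC}L=L(Y^0)=\Psiin_{A,D}(Y^0)$, and the reverse inequality is trivial. The term $\lambda\sum Y_{ij}$ is constant on $\cC$ because $Y\one=s\one$, so it can be dropped. What remains is the linear problem $\max_{Y\in\cC}\langle \tilde A,Y\rangle$, where
\[
\tilde A:=A-\frac{\theta}{n}\bigl(2Y^0-J\bigr)
\]
up to diagonal and constant terms. This matrix is $A$ with in-block entries reduced by $\theta/n$ and cross-block entries increased by $\theta/n$. Its expectation is an SBM-like matrix with in/out probabilities $(a-\theta)/n$ and $(b+\theta)/n$, which keep a gap $a-b-2\theta\ge c_\rho a$.

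\textbf{Step 2 (dual certificate for $\cC$).} I would construct a dual certificate in the standard way. It consists of diagonal multipliers $\alpha_i$ for $Y_{ii}=1$, row-sum multipliers $\beta$ (through $\beta\one^\top+\one\beta^\top$), a nonnegative $B$ supported off the blocks, and the slack
\[
S=\mathrm{diag}(\alpha)+\beta\one^\top+\one\beta^\top-\tilde A-B\succeq 0,
\qquad SY^0=0.
\]
I would choose $\alpha$ from in-block degrees and $B$ to zero out the block-wise row sums, as in Hajek--Wu--Xu or \cite{pirinen2019exact}. Given $S\succeq0$ with kernel containing the block indicators, complementary slackness then shows that $Y^0$ is optimal.

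\textbf{Step 3 (probabilistic verification).} The diagonal part requires every in-block degree of $\tilde A$ minus the cross-block contributions to exceed roughly $\|\tilde A-\Ebb\tilde A\|$. The row sums concentrate around $(a-b-2\theta)/K\ge c_\rho a/K\ge c_\rho A_0\log(nK)$, so Bernstein's inequality with a union bound over $n$ vertices and $K$ blocks gives failure probability $(nK)^{-cA_0}$. The spectral norm satisfies $\|A-\Ebb A\|\lesssim\sqrt a$ with probability at least $1-n^{-c}$, and this is $o(a/K)$ under Assumptions~\ref{ass:mild-K}--\ref{ass:high-signal} when $A_0$ is large. The deterministic shift by $\theta/n$ is constant on blocks, so it lies in the span handled by $\beta$ and $B$ and does not worsen the spectral bound; I would budget the extra factor (up to $4\theta$) to absorb the nonnegativity requirement on $B$.

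I expect Step 3, specifically the positive semidefiniteness of $S$ on the orthogonal complement of the block indicators, to be the main obstacle. It needs $\min_i(\text{in-degree margin})\gtrsim\|A-\Ebb A\|+(\text{size of }B\text{-correction})$ to hold uniformly. If a clean construction fails, I would fall back on invoking the certificate of \cite{pirinen2019exact} directly for the tilted model, whose gap condition $a-b-4\theta\ge c_\rho a$ is exactly what is assumed.
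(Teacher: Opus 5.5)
Your Step 1 is exactly the paper's reduction. On $\cE_D$, the feasible point $q\equiv0$, $z\equiv1$ together with Lemma~\ref{lem:truth-agreement} reduces \eqref{EqnCertify} to showing that $Y^0$ maximizes a block-tilted linear objective over $\cC$, with the $\lambda$-term constant there. For the rest, the paper does not build a dual certificate by hand. It takes your fallback: it applies Lemma~\ref{lemma:weighted-sdp} (Theorem 3 of \cite{pirinen2019exact}) to the tilted weights. Your primary route, an explicit Hajek--Wu--Xu-type certificate for $\cC$, is a legitimate and more self-contained alternative. As written, however, its crux is unverified: positive semidefiniteness of $S$ on the complement of the block indicators after the correction $B$. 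So it is the black-box route that actually closes the proof.

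If you use the fallback, three details need care.

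\emph{The factor of two.} $\|Y-Y^0\|_1$ counts each unordered pair twice, so the per-pair shift is $\delta_n=2\theta/n$, not $\theta/n$. The means become $(a-2\theta)/n$ and $(b+2\theta)/n$, and the gap is $a-b-4\theta$. That is exactly why the hypothesis reads $a-b-4\theta\ge c_\rho a$; it is not a budget for the $B$-correction.

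\emph{Weights in $[0,1]$.} Lemma~\ref{lemma:weighted-sdp} requires weights in $[0,1]$, but $A_{ij}-\delta_n<0$ on in-block non-edges. The paper therefore uses $W_{ij}=(B_{ij}+\delta_n)/(1+2\delta_n)$. This affine change preserves the maximizer over $\cC$, because $\sum_{i<j}Y_{ij}$ is constant on $\cC$. It does \emph{not} preserve the maximizer over the feasible set of \eqref{eq:weighted-sdp}, where only $X\one\le\one$ is imposed. So you must apply the theorem to $W$ itself. You then transfer back using that $Y/s$ is feasible for \eqref{eq:weighted-sdp} whenever $Y\in\cC$.

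\emph{Condition \eqref{eq:sdp-condition}.} Here $\gamma_{\mathrm{dist}}s\gtrsim a/K$ and all variances are at most $a/n$. The condition then follows from $a/K\ge A_0\log(nK)$ together with $K\lesssim\log(n)/\log\log(n)$ (Assumption~\ref{ass:mild-K}). Intersecting the $1-n^{-c}$ event of Lemma~\ref{lemma:weighted-sdp} with $\cE_D$ gives the stated probability.
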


The main result of this section,
proved in Appendix~\ref{AppThmRuntime}, summarizes the privacy and high-probability runtime guarantees of Algorithm~\ref{AlgOverview} for SBM inputs:

\begin{theorem}[Privacy and runtime guarantees]
\label{thm:runtime}
Consider Assumptions \ref{ass:mild-K} and \ref{ass:high-signal}. Choose $D=C_{deg}\frac aK$ and $\theta=\chi a$,
where $C_{deg}$ is sufficiently large and $\chi>0$ is sufficiently small depending only on $\rho_+$.  There exist constants $L,c>0$ such that if $\varepsilon\ge L\log(nK)$, Algorithm~\ref{AlgOverview} has the following properties:
\begin{enumerate}[label=(\roman*)]
\item For every input graph, it outputs an exact sample from $\pi_A$ in \eqref{eq:gibbs-law}.
\item For every input graph, it is pure $\varepsilon$-node-DP.
\item With probability at least $1-n^{-c}-(nK)^{-cA_0}$ over the SBM input, the certified branch is used and the conditional expected runtime, over the internal randomness of the sampler, is $\poly(n)$.
\end{enumerate}
\end{theorem}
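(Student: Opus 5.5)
The plan is to assemble the three claims from the earlier lemmas, handling the branches of Algorithm~\ref{AlgOverview} separately.

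\emph{Claim (i).} The algorithm takes one of two branches, decided by the certification test~\eqref{EqnCertify}.
\begin{itemize}
\item If~\eqref{EqnCertify} fails, brute-force enumeration samples exactly from $\pi_A$ by construction.
\item If~\eqref{EqnCertify} holds, Lemma~\ref{lem:certificate-margin} applies to the candidate $\sigma^\star$. With $\kappa=2\eta\theta/K$, it gives $b_A\le 1$ on $\Sigma\cap R_{\sigma^\star}$. Lemma~\ref{lem:sampler-exact} then shows the output of Algorithm~\ref{alg:certified-sampler} is an exact sample from $\pi_A$.
\end{itemize}
The key point is that in both branches the output law is exactly $\pi_A$. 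The branch taken, and the candidate $\sigma^\star$, are data-dependent. But they only affect the computational route, never the output distribution.

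\emph{Claim (ii).} By (i), the algorithm is, as a map from $A$ to an output distribution, identical to the mechanism $A\mapsto\pi_A$. Lemma~\ref{lem:global-sensitivity} gives pure $\varepsilon$-node-DP for that mechanism. No composition over the SDP steps is needed, since their outcomes are never released. The same constant $\kappa$ is used in every branch, so there is no extra leakage.

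\emph{Claim (iii).} Work on the intersection of two events:
\begin{itemize}
\item the event of Lemma~\ref{lem:eroded-cert-highprob}, that $Y^{\sigma_0}$ satisfies~\eqref{EqnCertify};
\item the event that the candidate SDP~\eqref{EqnCandSDP} returns $s\widehat X=Y^{\sigma_0}$, so that $\sigma^\star$ equals $\sigma_0$ up to relabeling.
\end{itemize}
For the second event I would invoke the exact-recovery result of \cite{pirinen2019exact}. Under Assumptions~\ref{ass:mild-K}–\ref{ass:high-signal} with $A_0$ large, its failure probability is $n^{-c}$. This step is the main obstacle: one must check that its signal condition is implied by $a/K\ge A_0\log(nK)$ and $b/a\le\rho_+$, with $K$ growing logarithmically. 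I expect it follows since $(a-b)^2/(K a)\gtrsim a/K$. Because $Y^\sigma$ is label-invariant, $Y^\star=Y^{\sigma_0}$, so the certified branch is taken. A union bound gives total failure probability at most $n^{-c}+(nK)^{-cA_0}$ after renaming constants.

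Next, verify the proposal condition $\kappa\ge\log(4n(K-1))$. We have
\[
\kappa=\frac{2\eta\theta}{K}=\frac{\varepsilon\chi a}{C_{deg}a}=\frac{\varepsilon\chi}{C_{deg}},
\]
using $\eta=\varepsilon/(2D)$, $\theta=\chi a$, and $D=C_{deg}a/K$. Taking $L\ge 2C_{deg}/\chi$ and $\varepsilon\ge L\log(nK)$ gives $\kappa\ge 2\log(nK)\ge\log(4n(K-1))$ for $n\ge 4$.

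With this condition, Lemma~\ref{lem:sampler-exact} bounds the expected number of trials by $e^{1/4}$. The cost of each step is polynomial:
\begin{itemize}
\item a proposal costs $O(nK)$;
\item checking membership in $\Sigma$ is immediate, and membership in $R_{\sigma^\star}$ is polynomial by Lemma~\ref{lem:hungarian-canon};
\item computing $b_A$ requires evaluating $\Tin_{A,D}$, which is polynomial by Remark~\ref{lem:score-eval};
\item drawing the final permutation costs $O(K^2)$ by Remark~\ref{remark:sampling_poly_unif_perm}.
\end{itemize}
The preprocessing is the candidate SDP, the certification SDP of Lemma~\ref{lem:eroded-sdp}, and one LP for $\Psiin_{A,D}(Y^\star)$, all of polynomial size. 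By Wald's identity, the conditional expected runtime is at most the preprocessing cost plus $e^{1/4}$ times the per-trial cost, which is $\poly(n)$ (using $K\le n$).
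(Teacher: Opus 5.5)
Your proposal is correct and follows essentially the same route as the paper: exactness in both branches, privacy inherited from Lemma~\ref{lem:global-sensitivity} because the output law is always $\pi_A$, and for (iii) a union bound over the candidate-recovery event and the certification event of Lemma~\ref{lem:eroded-cert-highprob}, followed by the computation $\kappa=\varepsilon\chi/C_{deg}$ and the $e^{1/4}$ bound on expected trials. The step you flag as the main obstacle is handled in the paper as you anticipate: it reruns the verification of condition~\eqref{eq:sdp-condition} from Lemma~\ref{lem:eroded-cert-highprob} with the shift removed (mean gap $(a-b)/n\asymp a/n$, variance at most $a/n$), and the only $K$-sensitive requirement, $a/K\gtrsim\sqrt{a}$ (i.e.\ $a/K\gtrsim K$), is supplied by Assumption~\ref{ass:mild-K}.
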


\subsection{Utility analysis}
\label{SecSBM}

We now turn to analyzing the utility of the output of Algorithm~\ref{AlgOverview}. The proof of the following result is provided in Appendix~\ref{AppUtilityProof}. It requires carefully adapting the proofs of Lemmas A.1 and A.5 in \cite{klopp2026node} (cf.\ Lemmas \ref{lemma:raw-kz} and \ref{lem:peeling}), because our
exponential mechanism is run with the modified score $\widetilde T_{A,D}$, not with the raw score $T_A$. On the event $\mathcal E_D$, by
Lemma~\ref{lem:truth-agreement}, if $\sigma$ is within a slack $u$ of maximizing $\widetilde T_{A,D}$, i.e., $\Tin_{A,D}(\sigma)\ge\max_{\tau\in\Sigma}\Tin_{A,D}(\tau)-u$, then $T_A(\sigma)\ge T_A(\sigma_0)-u$. However, this does not imply that $T_A(\sigma) \ge \max_{\tau\in\Sigma}T_A(\tau) - u$, which is required by \cite{klopp2026node}. Hence, the utility analysis requires estimates for the set $\{\sigma\in\Sigma:T_A(\sigma)\ge T_A(\sigma_0)-u\}$, rather than only for the near-maximum set $\{\sigma\in\Sigma:T_A(\sigma)\ge \max_{\tau\in\Sigma}T_A(\tau)-u\}$.

\begin{theorem}[Risk guarantees]
\label{thm:risk}
Suppose Assumptions \ref{ass:mild-K} and \ref{ass:high-signal} hold and $K\ge2$. Let $D=C_{deg}\frac aK$ and $\theta=\chi a$, where $C_{deg}$ is sufficiently large and $\chi>0$ is sufficiently small, depending on $\rho_+$. There exist constants $L,c_1,c_2,c_3>0$ such that if $\varepsilon\ge L\log(nK)$, the output $\widehat\sigma$ of Algorithm~\ref{AlgOverview} satisfies
\begin{equation}
        \sup_{\sigma_0\in\Sigma}\Ee r(\sigma_0,\widehat\sigma)
        \le
        \exp\left\{-c_1\frac{nI}{K}\right\}
        +
        \frac1{nK}e^{-c_2\varepsilon}
        +
        (nK)^{-c_3A_0}.
\label{eq:risk-final}
\end{equation}
\end{theorem}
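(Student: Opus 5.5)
By Theorem~\ref{thm:runtime}(i), the output $\widehat\sigma$ of Algorithm~\ref{AlgOverview} is always an exact sample from $\pi_A$. So it suffices to bound $\Ee_A\,\Ee_{\widehat\sigma\sim\pi_A} r(\sigma_0,\widehat\sigma)$ uniformly over $\sigma_0\in\Sigma$.

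I first split according to the good event $\mathcal G:=\cE_D\cap\mathcal G_{\rm KZ}$. Here $\mathcal G_{\rm KZ}$ is the high-probability event from the (adapted) Lemmas A.1 and A.5 of \cite{klopp2026node}. On that event, for every $m\ge1$ the number of labelings at orbit distance $m$ from $\sigma_0$ with $T_A(\sigma)\ge T_A(\sigma_0)-u$ is controlled. Since $r\le1$, the contribution of $\mathcal G^c$ is at most $\Pp(\cE_D^c)+\Pp(\mathcal G_{\rm KZ}^c)$. Lemma~\ref{lem:true-within-conc} bounds the first term by $(nK)^{-c_{deg}A_0}$. The second term should produce both the $(nK)^{-c_3A_0}$ term and the non-private term $\exp\{-c_1 nI/K\}$. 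The latter arises in \cite{klopp2026node} as the probability or expected fraction of nodes that are "bad," i.e.\ whose likelihood swap is unfavorable.

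On $\cE_D$, I use Lemma~\ref{lem:truth-agreement}. It gives $\Tin_{A,D}(\sigma_0)=T_A(\sigma_0)$ and $\Tin_{A,D}\le T_A$, so
\[
\pi_A(\sigma)\le \exp\{\eta(\Tin_{A,D}(\sigma)-\Tin_{A,D}(\sigma_0))\}\le \exp\{\eta(T_A(\sigma)-T_A(\sigma_0))\}.
\]
This holds because the partition function is at least $e^{\eta \Tin_{A,D}(\sigma_0)}$. Hence
\[
\Pp_{\pi_A}\bigl(\dorb([\widehat\sigma],[\sigma_0])=m\bigr)\le \sum_{\sigma:\dorb=m} e^{-\eta(T_A(\sigma_0)-T_A(\sigma))}.
\]
I then peel over the slack $u$. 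The set $\{T_A(\sigma)\ge T_A(\sigma_0)-u,\ \dorb=m\}$ is exactly the object the paper highlights. I need a bound on its size like $(nK)^{m}$ times an indicator that $u\gtrsim m\cdot\frac{a-b}{K}$, up to the bad-node correction. Summing against $e^{-\eta u}$ with $\eta=\varepsilon/(2D)$ and $D\asymp a/K$ gives, for $m$ beyond the bad-node count, terms of order $(nK)^m e^{-c\varepsilon m}$. Choosing $\varepsilon\ge L\log(nK)$ makes this geometric sum dominated by $m=1$. Dividing by $n$ (since $r=m/n$) yields $\frac{1}{nK}e^{-c_2\varepsilon}$ after absorbing $(nK)^2$ factors into $e^{-c\varepsilon/2}$.

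The main obstacle is the adapted peeling lemma, i.e.\ controlling $T_A(\sigma_0)-T_A(\sigma)$ from below for all $\sigma$ at orbit distance $m$, rather than relative to $\max T_A$. I would follow \cite{zhang2016minimax,klopp2026node}. Decompose $T_A(\sigma_0)-T_A(\sigma)$ into a sum over misclassified nodes of edge-count differences between their true and assigned communities, plus a cross term quadratic in the misclassified set. Use Bernstein plus a union bound over sets of size $m$ (costing $\binom nm K^m\le (nK)^m$) to show that for $m\ge m_0$ the difference is at least $c\,m(a-b)/K$, where $m_0$ is a small multiple of $n e^{-c_1 nI/K}$. This holds with failure probability $(nK)^{-cA_0}$, using $a/K\ge A_0\log(nK)$ and Assumption~\ref{ass:mild-K} for the union bound over label permutations. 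For $m<m_0$, I would trivially bound the risk by $m_0/n\le e^{-c_1 nI/K}$. Combining the three pieces gives \eqref{eq:risk-final}.
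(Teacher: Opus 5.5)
Your argument is correct, but it takes a genuinely different route from the paper. The paper never proves a uniform margin for $T_A$. Instead it:
\begin{itemize}
\item imports an entropy bound on $\{\sigma:T_A(\sigma)\ge T_A(\sigma_0)-u\}$ from \cite{klopp2026node} (Lemma~\ref{lemma:raw-kz});
\item uses peeling (Lemma~\ref{lem:peeling}) to show that $\widehat\sigma\sim\pi_A$ is a $u_\star$-near-maximizer of $\Tin_{A,D}$, except with probability $\alpha=(nK)^{-1}e^{-c\varepsilon}$, which is where the privacy term comes from;
\item transfers this to $T_A(\widehat\sigma)\ge T_A(\sigma_0)-u_\star$ via Lemma~\ref{lem:truth-agreement};
\item bounds the risk of a hybrid estimator by the Chernoff layer sum of Lemma~\ref{lem:true-relative-raw-slack}.
\end{itemize}
You instead combine the pointwise bound $\pi_A(\sigma)\le e^{-\eta(T_A(\sigma_0)-T_A(\sigma))}$ on $\cE_D$ with a high-probability event on which $T_A(\sigma_0)-T_A(\sigma)\ge c\,m(a-b)/K$ for every $\sigma$ at orbit distance $m$. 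You then sum the Gibbs mass layer by layer. The privacy term comes from the smallest layer, $\frac mn K!\,(enK/m)^m e^{-c'\varepsilon m}$ with $c'\asymp c(1-\rho_+)/C_{deg}$, and the non-private term is just the failure probability of the margin event.

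Your route is shorter and avoids both the imported entropy lemma and the hybrid estimator. It relies entirely on being above the exact-recovery threshold ($a/K\ge A_0\log(nK)$ with $A_0$ large). Because you bound $r\le 1$ on the bad event, your $c_1$ is only a Bernstein constant. That suffices for the theorem as stated. The paper's layer sum, by contrast, keeps the $m/n$ weighting and the exponent $(1+o(1))nI/K$ in its main term.

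Two remarks on carrying it out. First, in this regime the $m_0$/bad-node correction and $\mathcal G_{\rm KZ}$ are unnecessary, because the union bound closes for every $m\ge 1$. Second, do not prove the margin through a node-wise decomposition: its quadratic cross term is no longer negligible once $m\gtrsim n/K$. Use the exact split/merge representation instead,
\[
T_A(\sigma_0)-T_A(\sigma)=\sum_{S(\sigma;\sigma_0)}A_{ij}-\sum_{M(\sigma;\sigma_0)}A_{ij},
\]
where $|S|=|M|=\alpha(\sigma;\sigma_0)\gtrsim nm/K$ by Lemma B.5 of \cite{klopp2026node}, as in the proof of Lemma~\ref{lem:true-relative-raw-slack}. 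Bernstein then gives failure probability $e^{-c\,ma/K}$ per orbit, which beats the $(enK/m)^m$ orbit count. The $K!\le n^{C_{\mathrm{mg}}}$ relabelings enter only the Gibbs-mass sum, and they are absorbed by taking $L$ large once $C_{deg}$ is fixed.
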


Combined with the lower bound in \cite[Theorem 4.1]{klopp2026node}, Theorem \ref{thm:risk} shows that, for $a \asymp K\log(n)$, for any fixed $c>0$ and $A_0$ sufficiently large, a privacy level of $\varepsilon=\Theta(\log(n))$ is necessary and sufficient to drive the expected misclassification minimax risk $\mathop{\inf}\limits_{\widehat\sigma:\varepsilon\text{-node-DP}}
\mathop{\sup}\limits_{\sigma_0\in\Sigma}
\mathbb E r(\sigma_0,\widehat\sigma)$ below $n^{-(1+c)}$. Theorem~\ref{thm:risk} significantly sharpens the results of \cite{klopp2026node}, in which the proposed private mechanism is exponential-time and requires $\varepsilon \succsim K \log(n)$.

\begin{remark}
\label{RemLipExt}
The score $\widetilde T_{A,D}$ should not be viewed as a Lipschitz extension of $T_A$, previously introduced in the literature, in the usual sense of agreeing with $T_A$ on all candidate labelings on a high-probability domain. The reason is that, in the growing-$K$ regime, the relevant sensitivity scale is $\frac{a}{K}$, while
the total degree scale is $a$. A uniform agreement statement $\widetilde T_{A,D}(\sigma)=T_A(\sigma)$, for all $\sigma \in \Sigma$,
would require controlling the
number of incident edges from a vertex to vertices placed in the same
$\sigma$-community. Uniformly over all $\sigma$, this is controlled by
the total degree and is therefore of order $a$, not $\frac{a}{K}$. Using this larger scale would introduce an additional factor of $K$ in the lower bound on $\varepsilon$, resulting in $\varepsilon = \Omega(K\log(nK))$, since we would need to take $D\asymp a$, so $\eta=\frac{\varepsilon}{2D}\asymp \frac{\varepsilon}{a}$. Since the proof of Theorem \ref{thm:risk} requires $\eta\gtrsim K\log(nK)/a$, this would in turn require $\varepsilon\gtrsim K\log(nK)$.

Instead, on the event
$\mathcal \cE_D$, Lemma~\ref{lem:truth-agreement} gives $\widetilde T_{A,D}(\sigma)\le T_A(\sigma)$, for all $\sigma\in\Sigma$, with $\widetilde T_{A,D}(\sigma_0)=T_A(\sigma_0)$.
This is sufficient for the utility analysis: If $\widetilde T_{A,D}(\sigma)
        \ge
        \max_{\tau\in\Sigma}\widetilde T_{A,D}(\tau)-u$,
we have
$$\widetilde T_{A,D}(\sigma)
        \ge
        \widetilde T_{A,D}(\sigma_0)-u
        =
        T_A(\sigma_0)-u.$$
Since $T_A(\sigma)\ge \widetilde T_{A,D}(\sigma)$, it follows that $T_A(\sigma)\ge T_A(\sigma_0)-u$. Thus, every near-maximizer of the modified score is a near-maximizer of the raw score. The peeling argument in Lemma \ref{lem:peeling} shows that the Gibbs sampler outputs a near-maximizer of $\widetilde T_{A,D}$ with high probability,
and Lemma \ref{lem:true-relative-raw-slack} converts this into the desired risk bound.
\end{remark}

\begin{comment}
\subsection{Why the privacy condition is $\varepsilon\gtrsim\log(nK)$}

There are two analytic conditions involving $\varepsilon$. First, the peeling condition is $\eta>2B$.  With $D\asymp a/K$,
\[
        \eta=\frac{\varepsilon}{2D}\asymp \frac{\varepsilon K}{a},
        \qquad
        B\asymp\frac{K\log(nK)}{a},
\]
so $\eta>2B$ is equivalent to $\varepsilon\gtrsim \log(nK)$. Second, the sampler low-temperature condition is
\[
        \kappa=\eta\frac{2\theta}{K}\ge\log(4n(K-1)).
\]
With $\theta\asymp a$ and $D\asymp a/K$, we have $\kappa \asymp \frac{\varepsilon K}{a}\cdot \frac{a}{K} \asymp \varepsilon$. 
\end{comment}

%%%%%

\section{Truncated rejection sampler}
\label{sec:truncated-sampler}

As noted in Theorem~\ref{thm:runtime}, the expected runtime of Algorithm~\ref{AlgOverview}, assuming the certification condition is met (which occurs w.h.p.\ over SBM inputs) is polynomial in $n$. In fact, we can derive a stronger algorithmic guarantee, turning the runtime under the certification condition into \emph{worst-case} polynomial time uniformly over the internal randomness of the algorithm. The analysis of the truncated sampler (Algorithm~\ref{alg:truncated-certified-sampler}) is inspired by~\cite[Lemma 1]{cao2024channel}, which derives a total variation (TV) distance approximation guarantee on the output of the truncated sampler. In order to preserve \emph{pure} differential privacy, we require a more careful construction of the fallback sampling distribution (in case no proposal is accepted) and the behavior of the output distribution. For completeness, in Appendix~\ref{AppTruncTV}, we include the analysis of a truncated sampler based on TV distance, resulting in an approximate-DP guarantee.

\begin{remark}
\label{RemTruncated}
\cite{wang2015privacy, minami2016differential} show that a mechanism whose output law is close, uniformly over input datasets, in TV distance to that
of a DP mechanism, satisfies approximate DP. Our analysis in Appendix~\ref{AppTruncTV} produces similar $(\varepsilon, \delta)$ terms, where $\delta = (1 + e^{\varepsilon})\zeta_0$, and $\zeta_0$ is a bound on the TV distance. Truncated rejection samplers for approximate DP have also been analyzed recently in \cite{awan2023privacy}.
%while \cite{canonne2020discrete} analyze a method to efficiently sample exactly from a discrete Gaussian. They also point out that truncating the sampler results in inflating the $\delta$ parameter for an $(\varepsilon, \delta)$-DP guarantee. \cite{seeman2021exact} truncate a Markov Chain, which results in total variation distance approximations.
\end{remark}

\begin{algorithm}[h]
\caption{Certified truncated rejection sampler}
\label{alg:truncated-certified-sampler}
\begin{algorithmic}[1]
\STATE Input graph $A$, candidate $\sigma^\star\in\Sigma$, truncation level
$M\ge 1$.
\FOR{$t = 1, \dots, M$}
\STATE For each vertex $i$, independently propose a label $\sigma_i$ by
\[
\Pp(\sigma_i=\sigma_i^\star)=\frac{1}{1+(K-1)e^{-\kappa}},
\qquad
\Pp(\sigma_i=\ell)=\frac{e^{-\kappa}}{1+(K-1)e^{-\kappa}}
\quad(\ell\ne\sigma_i^\star).
\]
\STATE If $\sigma\notin\Sigma$ or $\sigma\notin R_{\sigma^\star}$, reject this proposal and continue to the next iteration.
\STATE Accept $\sigma$ with probability  $b_A(\sigma)
        :=
        \exp\left\{
        \eta[\Tin_{A,D}(\sigma)-\Tin_{A,D}(\sigma^\star)]
        +\kappa h(\sigma,\sigma^\star)
        \right\}$.
If rejected, continue to the next iteration.
\STATE If $\sigma$ is accepted, draw $\Pi\sim \operatorname{Unif}(\mathfrak S_K)$ and return $\Pi\circ\sigma$.
\ENDFOR
\STATE If no proposal is accepted, draw $\Pi\sim{\rm Unif}(\Sn_K)$ and output
$\Pi \circ \sigma^\star$.
\end{algorithmic}
\end{algorithm}

%%%%%

%\subsection{Privacy}

Let $Q_A^{(M)}$ denote the output law of the truncated sampler (Algorithm~\ref{AlgOverview}, with Algorithm~\ref{alg:truncated-certified-sampler}
used in place of Algorithm \ref{alg:certified-sampler}).
Define $\overline r_M:=\bigl(1-e^{-1/4}\bigr)^M$ and $\xi_M :=
    \max\left\{
        -\log(1-\overline r_M),
        \log\bigl(1+(e^{1/4}-1)\overline r_M\bigr)
    \right\}$.
The first result, proved in Appendix~\ref{AppTrunComp}, shows that the truncated rejection sampler leads to an additive increase in the privacy parameter by at most $2\xi_M$:

\begin{lemma}
\label{lem:truncated-uniform-comparison}
Assume that $\kappa=2\eta\theta/K$ and
$\kappa\ge \log(4n(K-1))$. For every input graph $A$ and every $\sigma\in\Sigma$, we have $e^{-\xi_M}\pi_A(\sigma)
    \le
    Q_A^{(M)}(\sigma)
    \le
    e^{\xi_M}\pi_A(\sigma)$.
In particular, if $\eta=\frac{\varepsilon}{2D}$, the mechanism which samples from $Q_A^{(M)}$ is pure $(\varepsilon+2\xi_M)$-node-DP.
\end{lemma}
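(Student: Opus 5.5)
The plan is to write down $Q_A^{(M)}$ in closed form as a mixture of $\pi_A$ and the fallback law, and then compare each mixture component to $\pi_A$ pointwise.

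\textbf{Uncertified branch.} If the certification condition \eqref{EqnCertify} fails, Algorithm~\ref{AlgOverview} samples exactly from $\pi_A$ by brute force. Hence $Q_A^{(M)}=\pi_A$ and the claim is trivial.

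\textbf{Mixture representation.} Assume the certificate holds, so Lemma~\ref{lem:certificate-margin} gives $b_A\le 1$ on $\Sigma\cap R_{\sigma^\star}$. Let $q$ be the product proposal law and $Z_q:=(1+(K-1)e^{-\kappa})^n$. For $\sigma\in\Sigma\cap R_{\sigma^\star}$ we have $q(\sigma)=e^{-\kappa h(\sigma,\sigma^\star)}/Z_q$, and therefore
\[
q(\sigma)\,b_A(\sigma)=\frac{e^{\eta[\Tin_{A,D}(\sigma)-\Tin_{A,D}(\sigma^\star)]}}{Z_q}.
\]
Write $Z_\pi:=\sum_{\tau\in\Sigma}e^{\eta\Tin_{A,D}(\tau)}$. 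The per-trial acceptance probability is then
\[
p_A=\sum_{\sigma\in\Sigma\cap R_{\sigma^\star}}\frac{e^{\eta[\Tin_{A,D}(\sigma)-\Tin_{A,D}(\sigma^\star)]}}{Z_q}
=\frac{Z_\pi\,e^{-\eta\Tin_{A,D}(\sigma^\star)}}{K!\,Z_q}.
\]
The second equality uses two facts. First, $R_{\sigma^\star}$ contains exactly one representative of each orbit. Second, every orbit has exactly $K!$ elements, since each $\sigma\in\Sigma$ uses all $K$ labels, and $\Tin_{A,D}$ is constant on orbits. By the argument of Lemma~\ref{lem:sampler-exact}, conditional on acceptance the output is distributed as $\pi_A$.

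Let $r:=(1-p_A)^M$ and let $U$ be the uniform law on the orbit $[\sigma^\star]$, which is the law of $\Pi\circ\sigma^\star$. Then
\[
Q_A^{(M)}=(1-r)\,\pi_A+r\,U .
\]
Lemma~\ref{lem:sampler-exact} gives $1/p_A\le e^{1/4}$, so $p_A\ge e^{-1/4}$ and $r\le\overline r_M$.

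\textbf{Lower bound.} Since $U\ge 0$, we get $Q_A^{(M)}\ge (1-r)\pi_A\ge(1-\overline r_M)\pi_A\ge e^{-\xi_M}\pi_A$.

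\textbf{Upper bound (the main step).} We must show $U\le e^{1/4}\pi_A$ pointwise. Off the orbit $[\sigma^\star]$ we have $U=0$, so only orbit elements matter. For $\sigma\in[\sigma^\star]$, the formula for $p_A$ gives
\[
\pi_A(\sigma)=\pi_A(\sigma^\star)=\frac{e^{\eta\Tin_{A,D}(\sigma^\star)}}{Z_\pi}=\frac{1}{K!\,p_A\,Z_q}.
\]
Therefore $U(\sigma)=1/K!=p_A Z_q\,\pi_A(\sigma)$. Now $p_A\le 1$, and $\kappa\ge\log(4n(K-1))$ gives $Z_q\le(1+\tfrac1{4n})^n\le e^{1/4}$. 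Hence $U\le e^{1/4}\pi_A$.

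It follows that
\[
Q_A^{(M)}\le\bigl(1+(e^{1/4}-1)r\bigr)\pi_A\le\bigl(1+(e^{1/4}-1)\overline r_M\bigr)\pi_A\le e^{\xi_M}\pi_A .
\]

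\textbf{Privacy.} Take $A\sim_v A'$. By Lemma~\ref{lem:global-sensitivity}, $\pi_A(\sigma)\le e^{\varepsilon}\pi_{A'}(\sigma)$. Chaining this with the two-sided bounds above (each of which holds for both inputs, whichever branch each uses) gives
\[
Q_A^{(M)}(\sigma)\le e^{\xi_M}\pi_A(\sigma)\le e^{\xi_M+\varepsilon}\pi_{A'}(\sigma)\le e^{\varepsilon+2\xi_M}Q_{A'}^{(M)}(\sigma).
\]
Summing over $\sigma\in S$ yields pure $(\varepsilon+2\xi_M)$-node-DP.
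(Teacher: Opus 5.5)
Your proposal is correct and follows essentially the same route as the paper. Both arguments write $Q_A^{(M)}=(1-r)\pi_A+rU$ and use $p_A\ge e^{-1/4}$ to get $r\le\overline r_M$. Both then bound $U/\pi_A = p_A Z_q\le Z_q\le e^{1/4}$ on the orbit $[\sigma^\star]$; this is the paper's identity $R_A/\pi_A=Z_A=p^A_{\rm acc}G\le G$ in different notation. The privacy claim then follows from the same three-step chaining. The only cosmetic difference is that you express $p_A$ through the full partition function divided by $K!$, whereas the paper sums over canonical representatives.
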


Importantly, a utility guarantee for the truncated sampler follows immediately from the utility guarantee of Theorem~\ref{thm:risk}, with an extra factor of $e^{\xi_M}$. The following result is proved in Appendix~\ref{AppTruncUtil}:

\begin{theorem}
\label{thm:truncated-utility}
Suppose Assumptions \ref{ass:mild-K} and  \ref{ass:high-signal} hold, and $K \ge 2$. Let $D=C_{deg}\frac aK$ and $\theta=\chi a$, where $C_{deg}$ is sufficiently large and $\chi>0$ is sufficiently small depending only on $\rho_+$. Let $\widehat\sigma^{(M)}\sim Q_A^{(M)}$ be the output of the truncated node-private mechanism with $\eta=\frac{\varepsilon}{2D}$. There exist constants $L,c_1,c_2,c_3>0$ such that if $\varepsilon\ge L\log(nK)$, the output $\widehat\sigma = \widehat\sigma^{(M)}$ of Algorithm~\ref{AlgOverview}, with Algorithm~\ref{alg:truncated-certified-sampler}
used in place of Algorithm \ref{alg:certified-sampler}, satisfies
\[
    \sup_{\sigma_0\in\Sigma}
    \mathbb E_{\sigma_0} r(\sigma_0,\widehat\sigma^{(M)})
    \le
    e^{\xi_M}
    \left[
        \exp\left\{-c_1\frac{nI}{K}\right\}
        +
        \frac{1}{nK}e^{-c_2\varepsilon}
        +
        (nK)^{-c_3A_0}
    \right].
\]
\end{theorem}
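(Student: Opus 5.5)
The plan is to transfer the risk bound of Theorem~\ref{thm:risk} to $Q_A^{(M)}$ pointwise, using the multiplicative comparison of Lemma~\ref{lem:truncated-uniform-comparison}. The loss $r(\sigma_0,\cdot)$ is nonnegative. So for each fixed input graph $A$, the upper bound $Q_A^{(M)}(\sigma)\le e^{\xi_M}\pi_A(\sigma)$ for all $\sigma\in\Sigma$ gives
\[
\sum_{\sigma\in\Sigma} r(\sigma_0,\sigma)\,Q_A^{(M)}(\sigma)\le e^{\xi_M}\sum_{\sigma\in\Sigma} r(\sigma_0,\sigma)\,\pi_A(\sigma).
\]
Here I must check that both laws are supported on $\Sigma$. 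For $Q_A^{(M)}$ this holds because every accepted proposal lies in $\Sigma$, the fallback $\Pi\circ\sigma^\star$ lies in $\Sigma$, and relabeling preserves $\Sigma$.

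The main subtlety is that Algorithm~\ref{AlgOverview} with the truncated sampler only calls Algorithm~\ref{alg:truncated-certified-sampler} on the certified branch. On the uncertified branch it samples exactly from $\pi_A$ by enumeration. So the output law is $Q_A^{(M)}$ on the certified branch and $\pi_A$ otherwise, and in both cases it is dominated by $e^{\xi_M}\pi_A$ because $\xi_M\ge 0$.

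I also need the hypotheses of Lemma~\ref{lem:truncated-uniform-comparison}. These are $\kappa=2\eta\theta/K$ and $\kappa\ge\log(4n(K-1))$. With $\theta=\chi a$ and $D=C_{deg}a/K$ we get $\kappa=\varepsilon\chi/C_{deg}$. Hence $\kappa\ge\log(4n(K-1))$ once $\varepsilon\ge L\log(nK)$ with $L$ large, enlarging the $L$ of Theorem~\ref{thm:risk} if necessary.

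Next I take expectation over the SBM input $A\sim\Pp_{\sigma_0}$. This gives
\[
\Ee_{\sigma_0} r(\sigma_0,\widehat\sigma^{(M)})\le e^{\xi_M}\,\Ee_{\sigma_0} r(\sigma_0,\widehat\sigma),
\]
where $\widehat\sigma$ is the exact-sampler output of Algorithm~\ref{AlgOverview}. Theorem~\ref{thm:risk} bounds the right-hand side by the bracketed quantity, and taking $\sup_{\sigma_0}$ finishes the proof. The constants $c_1,c_2,c_3$ are those of Theorem~\ref{thm:risk}.

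There is no real obstacle. The only point needing care is the branch bookkeeping and the fact that Theorem~\ref{thm:risk} is a statement about $\pi_A$ itself, with the exact algorithm's output law equal to $\pi_A$ for every $A$ by Theorem~\ref{thm:runtime}(i). That makes the pointwise domination legitimate.
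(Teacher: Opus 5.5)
Your proposal is correct and matches the paper's proof. Both combine the pointwise domination $Q_A^{(M)}\le e^{\xi_M}\pi_A$ from Lemma~\ref{lem:truncated-uniform-comparison} with nonnegativity of $r$, average over $A\sim\Pp_{\sigma_0}$, and apply Theorem~\ref{thm:risk} to the Gibbs law. Your extra checks (support in $\Sigma$, the uncertified branch, and $\kappa=\varepsilon\chi/C_{deg}\ge\log(4n(K-1))$ for $L$ large) are handled correctly; the paper absorbs them into its definition of $Q_A^{(M)}$ and the proof of Lemma~\ref{lem:truncated-uniform-comparison}.
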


The following corollary, proved in Appendix~\ref{AppCorChoice}, shows that taking $M = O(\mathrm{poly}(n))$ leads to similar utility guarantees as the non-truncated mechanism, with only an $e^{-\Theta(M)}$ increase in privacy.

\begin{corollary}
\label{cor:choice_M_pure}
Let $M = O(\mathrm{poly}(n))$.
\begin{enumerate}
    \item For $\eta=\frac{\varepsilon}{2D}$, the mechanism which samples from $Q_A^{(M)}$ is pure $O(\varepsilon + e^{-\Theta(M)})$-node-private.
%In particular, for $\varepsilon = \Omega(1)$, we obtain pure $O(\varepsilon)$-node-DP.
    \item For the parameter choices of Theorem \ref{thm:runtime}, there exist constants $c,c'>0$ such that, under the SBM input, with
probability at least $1-n^{-c}-(nK)^{-c'A_0}$, the runtime of the truncated mechanism is polynomial in $n$, uniformly over its internal randomness.
    \item If, in addition, the hypotheses of Theorem \ref{thm:risk} hold, then for $K \ge 2$, we have
\[
    \sup_{\sigma_0\in\Sigma}
    \mathbb E_{\sigma_0} r(\sigma_0,\widehat\sigma^{(M)})
    \le
        \exp\left\{-c_4\frac{nI}{K}\right\}
        +
        \frac{1}{nK}e^{-c_5\varepsilon}
        +
        (nK)^{-c_6A_0}.
\]
%for absolute constants $c_4, c_5, c_6 > 0$.
\end{enumerate}
\end{corollary}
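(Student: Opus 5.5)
We treat the three parts of Corollary~\ref{cor:choice_M_pure} separately. Each one specializes an earlier result to $M=O(\poly(n))$ and uses the elementary fact that $\xi_M$ decays exponentially in $M$.

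\textbf{Step 1: bound on $\xi_M$.} Write $\overline r_M=(1-e^{-1/4})^M=e^{-\gamma M}$ with $\gamma=-\log(1-e^{-1/4})>0$. For $M\ge M_0$ (an absolute constant) we have $\overline r_M\le 1/2$, so $-\log(1-\overline r_M)\le 2\overline r_M$. We also have $\log(1+(e^{1/4}-1)\overline r_M)\le (e^{1/4}-1)\overline r_M$. Hence $\xi_M\le 2e^{-\gamma M}=e^{-\Theta(M)}$, and in particular $\xi_M=O(1)$ uniformly in $M\ge1$ (for small $M$, bound $\overline r_M\le 1-e^{-1/4}<1$ directly).

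\textbf{Part 1 (privacy).} Lemma~\ref{lem:truncated-uniform-comparison} gives pure $(\varepsilon+2\xi_M)$-node-DP. By Step 1 this is $\varepsilon+O(e^{-\Theta(M)})$. One caveat: the lemma assumes $\kappa\ge\log(4n(K-1))$, and this must also hold off the certified branch. I would handle that by noting that the uncertified branch samples $\pi_A$ exactly, so the comparison holds there trivially with $\xi=0$. The global output law is therefore pointwise within $e^{\pm\xi_M}$ of $\pi_A$ for every $A$. Combining this with the $\varepsilon$-DP of $\pi_A$ from Lemma~\ref{lem:global-sensitivity} gives the claim.

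\textbf{Part 2 (runtime).} By Lemma~\ref{lem:eroded-cert-highprob}, which is also the event used in Theorem~\ref{thm:runtime}(iii), with probability at least $1-n^{-c}-(nK)^{-c'A_0}$ the certified branch is taken. This requires that the SDP candidate equals $\sigma_0$ and that certification succeeds. On that event the algorithm consists of:
\begin{itemize}
\item the two SDPs, which are polynomial by Lemma~\ref{lem:eroded-sdp};
\item at most $M$ iterations, each costing $O(nK)$ for the proposal, polynomial time for the $\Sigma$ and $R_{\sigma^\star}$ checks (Lemma~\ref{lem:hungarian-canon}), one LP evaluation of $\Tin_{A,D}$ (Remark~\ref{lem:score-eval}), and $O(K^2)$ for the permutation (Remark~\ref{remark:sampling_poly_unif_perm});
\item the fallback step, which is $O(n+K^2)$.
\end{itemize}
Since $M=\poly(n)$, the total is deterministic $\poly(n)$ regardless of the internal coin flips.

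\textbf{Part 3 (utility).} Apply Theorem~\ref{thm:truncated-utility} and use $e^{\xi_M}\le e^{O(1)}$ from Step 1. This constant can be absorbed, for example by using $C e^{-x}\le e^{-x/2}$ once $x$ is large. That holds because $nI/K\asymp a/K\ge A_0\log(nK)$ is large, $\varepsilon\ge L\log(nK)$, and $A_0$ is large. We then set $c_4=c_1/2$, $c_5=c_2$ (the prefactor $1/(nK)$ can absorb a constant by splitting the exponent, for instance using $e^{-c_2\varepsilon/2}\le (nK)^{-c_2L/2}$ small), and $c_6=c_3/2$.

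The main obstacle is mild: making sure the comparison in Lemma~\ref{lem:truncated-uniform-comparison} applies to the full mechanism, including the brute-force branch, so that the privacy claim holds for every input. The constant absorption in Part~3 is routine.
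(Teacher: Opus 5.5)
Your proposal is correct and follows the paper's proof essentially step for step. You bound $\xi_M\le 2\overline r_M=e^{-\Theta(M)}$, invoke Lemma~\ref{lem:truncated-uniform-comparison} for privacy (its proof already covers the brute-force branch via ${\mathsf C}(A)=0$), reuse the event $\Omega_{\mathrm{poly}}$ from the proof of Theorem~\ref{thm:runtime} together with the $M$-iteration cap for worst-case runtime, and absorb $e^{\xi_M}$ into the exponents for utility. Note that $\Omega_{\mathrm{poly}}$ needs candidate recovery as well as certification, so Lemma~\ref{lem:eroded-cert-highprob} alone does not give it.

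One small bookkeeping slip: $e^{\xi_M}>1$ cannot be absorbed into the fixed $1/(nK)$ prefactor, so you need $c_5<c_2$, not $c_5=c_2$. Taking $c_5=c_2/2$ works, as your own exponent-splitting remark suggests.
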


Thus, taking $M\asymp \log(n)$, the truncated sampler has worst-case polynomial runtime, privacy parameter $\varepsilon+o(1)$, and the same minimax risk
as in Theorem~\ref{thm:risk}. Consequently, combined with the lower bound \cite[Theorem 4.1]{klopp2026node}, Corollary \ref{cor:choice_M_pure} shows that for $a \asymp K\log(n)$, any fixed $c>0$, and $A_0$ sufficiently large depending on $c$, a privacy level of $\varepsilon=\Theta(\log(n))$ is necessary and sufficient to drive the expected misclassification minimax risk $\mathop{\inf}\limits_{\widehat\sigma:\varepsilon\text{-node-DP}}
\mathop{\sup}\limits_{\sigma_0\in\Sigma}
\mathbb E r(\sigma_0,\widehat\sigma)$ below $n^{-(1+c)}$.

\section{Discussion}

We have presented polynomial-time algorithms for node-private community estimation in SBMs that achieve the minimax rate of \cite{klopp2026node}. Our results are applicable when $K$ grows as $\widetilde{O}(\log(n))$, in which case we show that $\varepsilon \asymp \log(n)$ is necessary and sufficient for a misclassification risk of $\frac{1}{\text{poly}(n)}$.

Interesting open directions include developing and analyzing node-private algorithms (either polynomial- or exponential-time) for exact recovery when $K$ grows more rapidly as a function of $n$, or developing polynomial-time algorithms that are more widely applicable to stochastic block models with \emph{approximately} equal-sized communities. Another important question is to exactly characterize upper and lower bounds for $\varepsilon$ in terms of $n$ and $K$ order for the risk to scale as a more general function $f(n)$. Similar questions could be posed concerning necessary and sufficient scaling of the joint parameters $(\varepsilon, \delta)$ in the setting of approximate differential privacy.

%%%%%

\section{AI declaration}

This paper was written with the help of ChatGPT 5.5 Plus. Although ChatGPT played a critical role in brainstorming the initial idea for a polynomial-time algorithm and providing proof outlines for the main results, the authors were heavily involved in interpreting, correcting, and rigorizing all the arguments in the paper.

%\textcolor{red}{SODA rules: ``AI tools may be used for limited language editing without acknowledgment. Any other use of AI must be clearly disclosed, including the tool used, how it was used, and what content was generated or modified."}

\begin{comment}
\begin{center}
\begin{tabular}{l}
In this note, we answer the call \\
posed by Klopp and Zadik, once open to all: \\
a polynomial-time scheme for private recovery, \\
near their exponential-time discovery. \\
Our method extends, with explicit precision, \\
the penalized likelihood score by Lipschitz decision; \\
then samples labels by accept and reject, \\
with polynomial runtime and privacy kept checked. \\
These tools may travel beyond this case, \\
to other models in the private space. \\
And though ChatGPT-5.5 helped light the flame, \\
the authors revised, redirected, and framed \\
the arguments here with mathematical care, \\
until the final proof took shape from there.
\end{tabular}
\end{center}
\end{comment}

%%%%%

\appendix

\section{Notation}
\label{AppNotation}

For $K \in \mathbb{N}$, we write $[K]$ to denote the set $\{1, \dots, K\}$. Let $\Sn_K$ denote the permutation group of $[K]$, and define the orbit $[\sigma]:=\{\pi\circ\sigma:\pi\in\Sn_K\}$.

For a matrix $A$, we write $\|A\|_1$ to denote the entrywise $\ell_1$-norm. For square matrices $A$ and $B$, we denote their inner product by $\langle A,B\rangle = \Tr(AB^T)$. We write $E(A)$ to denote the set of unordered edges of an adjacency matrix $A$, and we use both $ij$ and $\{i,j\}$ to denote elements of $E(A)$.

For functions $f(n)$ and $g(n)$, we write $f(n) \precsim g(n)$ and $f(n) = O(g(n))$ to mean that $f(n) \le c_1g(n)$, if $n \geq c_2$, for universal constants $c_1, c_2 \in (0, \infty)$, and define $f(n) \succsim g(n)$ and $f(n) = \Omega(g(n))$ analogously. We write $f(n) \asymp g(n)$ and $f(n) = \Theta(g(n))$ when both $f(n) \precsim g(n)$ and $f(n) \succsim g(n)$ hold simultaneously. We write $f(n) = \mathrm{poly}(n)$ when $f(n) = \Theta(n^\alpha)$ for some constant $\alpha > 0$. We use the notation $c, C, c_i$, etc., to denote absolute positive constants, whose values may change between results in our paper.

%%%%%

\section{Approximate differential privacy}
\label{AppApprox}

In this appendix, we derive privacy, runtime, and utility results for approximate Gibbs sampling based on a slightly simpler truncated rejection sampler. The results should be viewed in comparison to Section~\ref{sec:truncated-sampler}. The algorithm and analysis are easier, but the result is approximate rather than pure differential privacy. For completeness, we then derive lower bounds for exact recovery rates under approximate DP in Appendix~\ref{AppLBApprox}.

%%%%%

\subsection{Total variation-based truncated rejection sampling}
\label{AppTruncTV}

We present a variant of the analysis in Section \ref{sec:truncated-sampler}, which results in an approximate DP guarantee. At a high level, this analysis shows that approximate sampling in terms of TV distance translates directly into an approximate differential privacy guarantee. This may be interesting in its own right: Note that in the analysis of Lemma~\ref{lem:truncated-uniform-comparison_approx_DP}, the distribution of the fallback sampling distribution $R_A$ is irrelevant, in contrast to the proof of Lemma~\ref{lem:truncated-certified-decomposition}.

%\subsection{Privacy and runtime}

We begin with a privacy guarantee:

\begin{lemma}
\label{lem:truncated-uniform-comparison_approx_DP}
For every input graph $A$, we have $\|Q_A^{(M)}-\pi_A\|_{\rm TV} \le \overline r_M$. In particular, for $\eta=\frac{\varepsilon}{2D}$, the
truncated mechanism which samples from $Q_A^{(M)}$ is $\left(\varepsilon,(1+e^\varepsilon)\overline r_M\right)$-node-DP.
\end{lemma}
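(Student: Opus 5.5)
We prove the TV bound by coupling the truncated mechanism with the untruncated one; the privacy claim then follows from Lemma~\ref{lem:global-sensitivity} and the triangle inequality.

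\emph{Certified versus brute-force branch.} If the certification condition \eqref{EqnCertify} fails, both mechanisms fall back to brute-force enumeration. Then $Q_A^{(M)}=\pi_A$ and the bound is trivial. So assume certification holds.

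\emph{Coupling.} Run Algorithm~\ref{alg:certified-sampler} and Algorithm~\ref{alg:truncated-certified-sampler} on the same i.i.d.\ sequence of proposals, uniform acceptance variables, and final permutation draws. Each trial of Algorithm~\ref{alg:certified-sampler} is an independent attempt. By Lemma~\ref{lem:sampler-exact}, it succeeds with probability $p_A\ge e^{-1/4}$, since the number of trials is geometric with mean $1/p_A\le e^{1/4}$. Its output is an exact draw from $\pi_A$. On the event $G$ that some trial among the first $M$ accepts, the two algorithms return the same label, namely $\Pi\circ\sigma$ at the first acceptance. They can differ only on $G^c$, whose probability is $(1-p_A)^M\le(1-e^{-1/4})^M=\overline r_M$. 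The coupling inequality then gives
\[
\|Q_A^{(M)}-\pi_A\|_{\rm TV}\le \Pp(G^c)\le \overline r_M .
\]
This argument never uses the fallback law $R_A$ (here $\Pi\circ\sigma^\star$), as remarked before the lemma. The only things to check are that the hypothesis $\kappa\ge\log(4n(K-1))$ of Lemma~\ref{lem:sampler-exact} is in force and that the per-trial success probability does not depend on the trial index.

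\emph{Privacy.} Fix $A\sim_v A'$ and a set $S$. By Lemma~\ref{lem:global-sensitivity}, $\pi_A$ is $\varepsilon$-node-DP. Therefore
\[
Q_A^{(M)}(S)\le \pi_A(S)+\overline r_M\le e^{\varepsilon}\pi_{A'}(S)+\overline r_M\le e^{\varepsilon}\bigl(Q_{A'}^{(M)}(S)+\overline r_M\bigr)+\overline r_M .
\]
This yields $\delta=(1+e^\varepsilon)\overline r_M$.

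The main subtlety is the coupling step. One has to confirm that the certified-branch decision is a deterministic function of $A$ shared by both mechanisms, and that conditioned on acceptance at trial $t$ the output law is exactly $\pi_A$ for every $t$. The latter holds because trials are i.i.d.; everything else is routine.
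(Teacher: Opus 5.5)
Your proof is correct, but it reaches the TV bound by a slightly different route than the paper. The paper reuses the computation in Lemma~\ref{lem:truncated-certified-decomposition} to write the output law as an explicit mixture $Q_A^{(M)}=(1-r^A_{\rm acc})\pi_A+r^A_{\rm acc}R_A$, with $r^A_{\rm acc}=(1-p^A_{\rm acc})^M$ and $R_A$ the fallback law. It then reads off $\|Q_A^{(M)}-\pi_A\|_{\rm TV}=r^A_{\rm acc}\|R_A-\pi_A\|_{\rm TV}\le\overline r_M$. That route requires showing that, conditional on some acceptance within the first $M$ trials, the output is exactly $\pi_A$-distributed.

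Your coupling with the untruncated sampler avoids that conditional computation. You only need two facts: the marginal exactness of Algorithm~\ref{alg:certified-sampler} from Lemma~\ref{lem:sampler-exact}, and the fact that the two runs agree on $\{T\le M\}$. So the ``main subtlety'' you flag at the end (the output law conditional on acceptance at trial $t$) is not actually used by your argument.

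The trade-off is that a coupling yields only an additive, TV-type comparison. The paper's mixture identity also gives pointwise multiplicative bounds: $(1-\overline r_M)\pi_A\le Q_A^{(M)}$, and, combined with $R_A\le e^{1/4}\pi_A$, also $Q_A^{(M)}\le\bigl(1+(e^{1/4}-1)\overline r_M\bigr)\pi_A$. These are what the pure-DP Lemma~\ref{lem:truncated-uniform-comparison} needs.

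For the present approximate-DP statement your route is sufficient and slightly more economical. Your privacy step is identical to the paper's.
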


\begin{proof}
If ${\mathsf C}(A)=1$, by the same argument as in
Lemma~\ref{lem:truncated-certified-decomposition} (with $R_A$ arbitrary), we have
\begin{equation}
\label{EqnQAM}
Q_A^{(M)} = (1-r^A_{\rm acc})\pi_A+r^A_{\rm acc} R_A,
\end{equation}
with $r^A_{\rm acc}\le \overline r_M$. If ${\mathsf C}(A)=0$, then $Q_A^{(M)}=\pi_A$, so equation~\eqref{EqnQAM} still holds.

Hence, we have
\[
\|Q_A^{(M)}-\pi_A\|_{\rm TV} =r^A_{\rm acc}\|R_A-\pi_A\|_{\rm TV} \leq r^A_{\rm acc} \leq \overline r_M.
\]
This proves the TV bound.

Now let $A\sim_v A'$, and let $S\subseteq\Sigma$. Since $\eta=\varepsilon/(2D)$, by Lemma \ref{lem:global-sensitivity}, we have $\pi_A(S)\le e^\varepsilon \pi_{A'}(S)$. By the TV bound, we have 
\begin{align*}
Q_A^{(M)}(S) \leq \pi_A(S)+\overline r_M \leq e^\varepsilon \pi_{A'}(S)+\overline r_M \leq e^\varepsilon Q_{A'}^{(M)}(S)+(1+e^\varepsilon)\overline r_M .
\end{align*}
This proves the privacy.
\end{proof}

%Note that the high-probability runtime is precisely the statement of Theorem \ref{thm:truncated-runtime}.

%\subsection{Utility}

A utility guarantee for the truncated sampler follows immediately from the utility guarantee of Theorem~\ref{thm:risk}, with an extra additive $\overline r_M $ term:

\begin{theorem}
\label{thm:truncated-utility_approx_DP}
Suppose Assumptions \ref{ass:mild-K} and  \ref{ass:high-signal} hold, and $K \ge 2$. Let $D=C_{deg}\frac aK$ and $\theta=\chi a$, where $C_{deg}$ is sufficiently large and $\chi>0$ is sufficiently small depending only on $\rho_+$. Let $\widehat\sigma^{(M)}\sim Q_A^{(M)}$ be the output of the truncated node-private mechanism with $\eta=\varepsilon/(2D)$. There exist constants $L,c_1,c_2,c_3>0$ such that, if $\varepsilon\ge L\log(nK)$, the output $\widehat\sigma = \widehat\sigma^{(M)}$ of Algorithm~\ref{AlgOverview}, with Algorithm~\ref{alg:truncated-certified-sampler}
used in place of Algorithm \ref{alg:certified-sampler}, satisfies
\[
    \sup_{\sigma_0\in\Sigma}
    \mathbb E_{\sigma_0} r(\sigma_0,\widehat\sigma^{(M)})
    \le
        \exp\left\{-c_1\frac{nI}{K}\right\}
        +
        \frac{1}{nK}e^{-c_2\varepsilon}
        +
        (nK)^{-c_3A_0} + \overline r_M.
\]
\end{theorem}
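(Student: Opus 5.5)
The plan is to couple the truncated output law with the exact Gibbs law through the total variation bound of Lemma~\ref{lem:truncated-uniform-comparison_approx_DP}, and then invoke Theorem~\ref{thm:risk} for the exact sampler. The loss satisfies $0\le r(\sigma_0,\cdot)\le 1$. For any fixed input graph $A$ and any $[0,1]$-valued function $f$ on $\Sigma$,
\[
\Big|\sum_{\sigma}f(\sigma)Q_A^{(M)}(\sigma)-\sum_\sigma f(\sigma)\pi_A(\sigma)\Big|\le \|Q_A^{(M)}-\pi_A\|_{\rm TV}\le \overline r_M .
\]
Here the first inequality uses that $f$ takes values in an interval of length one, so the difference of expectations is bounded by the TV distance.

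First I fix $\sigma_0\in\Sigma$ and condition on the SBM input $A$. Applying the display with $f=r(\sigma_0,\cdot)$ gives
\[
\mathbb E[r(\sigma_0,\widehat\sigma^{(M)})\mid A]\le \mathbb E_{\widehat\sigma\sim\pi_A}[r(\sigma_0,\widehat\sigma)]+\overline r_M.
\]
This holds for every graph $A$, with no certification assumption needed. The reason is that the decomposition \eqref{EqnQAM} covers both branches: when certification fails, $Q_A^{(M)}=\pi_A$ exactly.

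Next I take expectation over $A\sim\Pp_{\sigma_0}$. By Theorem~\ref{thm:risk}(i) and Theorem~\ref{thm:runtime}(i), the untruncated Algorithm~\ref{AlgOverview} outputs an exact sample from $\pi_A$ for every input. So $\mathbb E_{\sigma_0}\mathbb E_{\pi_A} r(\sigma_0,\widehat\sigma)$ is exactly the risk bounded in \eqref{eq:risk-final} under the same parameter choices ($D=C_{deg}a/K$, $\theta=\chi a$, $\varepsilon\ge L\log(nK)$). Taking the supremum over $\sigma_0$ then yields the claim with the same constants $L,c_1,c_2,c_3$.

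The one point needing care is confirming that the truncated mechanism uses the same $\eta=\varepsilon/(2D)$ and $\kappa$, so the target $\pi_A$ is identical to the one in Theorem~\ref{thm:risk}. I also need the TV bound $r^A_{\rm acc}\le\overline r_M$ to be uniform in $A$. This uniformity follows from the per-trial acceptance probability being at least $e^{-1/4}$ whenever $\kappa\ge\log(4n(K-1))$, as in Lemma~\ref{lem:sampler-exact}. Beyond that, the argument is a direct transfer.
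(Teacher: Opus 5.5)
Your proposal is correct and follows essentially the same route as the paper. The paper bounds $\mathbb E_{Q_A^{(M)}} r(\sigma_0,\cdot)$ directly from the mixture \eqref{EqnQAM}, using $r\in[0,1]$ to get $\mathbb E_{Q_A^{(M)}} r(\sigma_0,\cdot)\le \mathbb E_{\pi_A} r(\sigma_0,\cdot)+r^A_{\rm acc}\le \mathbb E_{\pi_A} r(\sigma_0,\cdot)+\overline r_M$; this is the same inequality you get from the TV bound, and the paper then averages over $A$ and applies Theorem~\ref{thm:risk}, as you do. One small slip: the exactness of the untruncated output is Theorem~\ref{thm:runtime}(i), since Theorem~\ref{thm:risk} has no numbered parts.
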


\begin{proof}
Fix an input graph $A$ and a true labeling $\sigma_0\in\Sigma$.
%If ${\mathsf C}(A)=1$, by the same argument as in
%Lemma~\ref{lem:truncated-certified-decomposition} (with $R_A$ arbitrary), we have 
%
%and $r^A_{\rm acc}\le \overline r_M$. If ${\mathsf C}(A)=0$, then $Q_A^{(M)}=\pi_A$. In this case, set $r_{\mathrm{acc}}^A:=0$; then equation~\eqref{EqnQAM} still holds.
%and let $R_A$ be any probability
%measure on $\Sigma$. Hence, for ${\mathsf C}(A)=0$, we have $Q_A^{(M)} = (1-r^A_{\rm acc})\pi_A+r^A_{\rm acc} R_A$, with $r^A_{\rm acc}\le \overline r_M$. 
Combining the fact that $r(\sigma_0,\sigma) \in [0, 1]$ with equation~\eqref{EqnQAM}, we have
\begin{align*}
\mathbb E_{Q_A^{(M)}} r(\sigma_0,\sigma) = (1 - r^A_{\rm acc})\mathbb E_{\pi_A}r(\sigma_0,\sigma) + r^A_{\rm acc}\mathbb E_{R_A}r(\sigma_0,\sigma) \le \mathbb E_{\pi_A}r(\sigma_0,\sigma) + r^A_{\rm acc} \le \mathbb E_{\pi_A}r(\sigma_0,\sigma) + \overline r_M.
\end{align*}
Taking an expectation over the SBM graph $A$, conditional on $\sigma_0$, yields
$$\mathbb E_{\sigma_0} r(\sigma_0,\widehat\sigma^{(M)}) \le \mathbb E_{\sigma_0}\mathbb E_{\pi_A}r(\sigma_0,\sigma) + \overline r_M.$$
Taking the supremum over $\sigma_0\in\Sigma$ and applying Theorem \ref{thm:risk} to the Gibbs law $\pi_A$ gives the desired bound.
\end{proof}

Finally, the following corollary shows that taking $M = O(\poly(n))$ leads to similar utility guarantees as the non-truncated mechanism, with an approximate DP guarantee instead of pure DP:

\begin{corollary}
\label{cor:choice_M_approx_DP}
Let $M = \frac{\log\bigl((1+e^\varepsilon)/\delta\bigr)}{-\log(1-e^{-1/4})}$, with $\frac{1 + e^{\varepsilon}}{e^{\mathrm{poly}(n)}} \le \delta < 1$.
\begin{enumerate}
    \item For $\eta=\frac{\varepsilon}{2D}$, the
truncated mechanism which samples from $Q_A^{(M)}$ is $(\varepsilon, \delta)$-node-DP.
    \item For the parameter choices of Theorem \ref{thm:runtime}, there exist constants $c,c'>0$ such that, under the SBM input, with
probability at least $1-n^{-c}-(nK)^{-c'A_0}$, the runtime of the truncated mechanism is polynomial in $n$, uniformly over its internal randomness.
    \item If, in addition, the hypotheses of Theorem \ref{thm:risk} hold, then for $K \ge 2$, we have
\[
    \sup_{\sigma_0\in\Sigma}
    \mathbb E_{\sigma_0} r(\sigma_0,\widehat\sigma^{(M)})
    \le
        \exp\left\{-c_1\frac{nI}{K}\right\}
        +
        \frac{1}{nK}e^{-c_2\varepsilon}
        +
        (nK)^{-c_3A_0} + \delta e^{-\varepsilon}.
\]
\end{enumerate}
\end{corollary}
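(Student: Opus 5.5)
The proof reduces all three parts to results already established: Lemma~\ref{lem:truncated-uniform-comparison_approx_DP}, Theorem~\ref{thm:truncated-utility_approx_DP}, and the runtime analysis behind Corollary~\ref{cor:choice_M_pure}. The only new ingredient is the choice of $M$. With $\overline r_M=(1-e^{-1/4})^M$, the prescribed value
\[
M=\frac{\log\bigl((1+e^\varepsilon)/\delta\bigr)}{-\log(1-e^{-1/4})}
\]
gives $M\log(1-e^{-1/4})=-\log\bigl((1+e^\varepsilon)/\delta\bigr)$. Hence $\overline r_M=\delta/(1+e^\varepsilon)$ exactly. If $M$ must be an integer, we round up, which only decreases $\overline r_M$. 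The assumption $\delta<1$ guarantees that $M>0$.

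\textbf{Part 1.} Lemma~\ref{lem:truncated-uniform-comparison_approx_DP} gives $(\varepsilon,(1+e^\varepsilon)\overline r_M)$-node-DP. Substituting $\overline r_M\le \delta/(1+e^\varepsilon)$ yields $(\varepsilon,\delta)$-node-DP for every input graph.

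\textbf{Part 3.} Theorem~\ref{thm:truncated-utility_approx_DP} gives the risk bound with the additive term $\overline r_M$. We then bound
\[
\overline r_M=\frac{\delta}{1+e^\varepsilon}\le \delta e^{-\varepsilon},
\]
which is exactly the claimed additive term.

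\textbf{Part 2.} The lower bound $\delta\ge(1+e^\varepsilon)e^{-\poly(n)}$ gives $\log\bigl((1+e^\varepsilon)/\delta\bigr)\le \poly(n)$, so $M=O(\poly(n))$. Now apply the same argument as in Corollary~\ref{cor:choice_M_pure}(2), or reproduce it directly. By Lemma~\ref{lem:eroded-cert-highprob} and the proof of Theorem~\ref{thm:runtime}(iii), with probability at least $1-n^{-c}-(nK)^{-c'A_0}$ the candidate SDP returns $\sigma^\star=\sigma_0$ and the certification condition~\eqref{EqnCertify} holds. On this event the brute-force branch is never entered. The remaining work is deterministic and polynomial:
\begin{itemize}
\item the two SDPs (Lemma~\ref{lem:eroded-sdp});
\item at most $M$ iterations, each consisting of an $O(nK)$ proposal, a polynomial-time membership test for $\Sigma$ and $R_{\sigma^\star}$ (Lemma~\ref{lem:hungarian-canon}), an LP evaluation of $\Tin_{A,D}$ (Remark~\ref{lem:score-eval}), and a uniform permutation draw (Remark~\ref{remark:sampling_poly_unif_perm}).
\end{itemize}
The total is $M\cdot\poly(n)=\poly(n)$ uniformly over the internal randomness.

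\textbf{Main obstacle.} The one delicate point is confirming that the SBM event on which certification succeeds carries over from Theorem~\ref{thm:runtime}. This requires checking that $\kappa=2\eta\theta/K\ge\log(4n(K-1))$ under $\varepsilon\ge L\log(nK)$. Since $\kappa\asymp\varepsilon$, this holds by the parameter choices already made in Theorem~\ref{thm:runtime}.
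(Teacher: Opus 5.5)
Your proposal is correct and follows the paper's proof essentially step for step. The choice of $M$ makes $(1+e^\varepsilon)\overline r_M=\delta$, Part 1 is Lemma~\ref{lem:truncated-uniform-comparison_approx_DP}, Part 3 is Theorem~\ref{thm:truncated-utility_approx_DP} with $\overline r_M=\delta/(1+e^\varepsilon)\le\delta e^{-\varepsilon}$, and Part 2 uses $M=O(\poly(n))$ together with the event $\Omega_{\mathrm{poly}}$ from Theorem~\ref{thm:runtime}. Two small remarks: rounding $M$ up to an integer is a point the paper omits, and on that event the candidate SDP only gives $Y^{\sigma^\star}=Y^{\sigma_0}$, i.e.\ $\sigma^\star\in[\sigma_0]$, not literally $\sigma^\star=\sigma_0$, which is harmless since certification depends only on $Y^{\sigma^\star}$.
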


\begin{proof}
The choice of $M$ gives $(1 + e^{\varepsilon})\overline r_M = \delta$, so the first claim follows directly from Lemma \ref{lem:truncated-uniform-comparison_approx_DP}. Next, since $\frac{1 + e^{\varepsilon}}{e^{\mathrm{poly}(n)}} \le \delta < 1$, we have $M = O(\mathrm{poly}(n))$, so the second claim follows as in the proof of Corollary~\ref{cor:choice_M_pure}.

For the final claim, since we assume $\varepsilon \ge L\log(nK)$, we have
\[
    \sup_{\sigma_0\in\Sigma}
    \mathbb E_{\sigma_0} r(\sigma_0,\widehat\sigma^{(M)})
    \le \exp\left\{-c_1\frac{nI}{K}\right\}
        +
        \frac{1}{nK}e^{-c_2\varepsilon}
        +
        (nK)^{-c_3A_0} + \overline r_M,
\]
by Theorem \ref{thm:truncated-utility_approx_DP}.
Noting that $(1 + e^{\varepsilon})\overline r_M = \delta$ completes the proof.
\end{proof}

%%%%%

\subsection{Lower bound}
\label{AppLBApprox}

In this section, we derive an extension of the lower bound argument in \cite{klopp2026node}, adapted to approximate differential privacy. The key modification is the application of group privacy. First, we describe the aspect of group privacy of size two in the lemma below:

\begin{lemma}
\label{lem:approx-group-privacy-two}
Let $\mathcal M:\mathcal G_n\to [K]^n$ be $(\varepsilon,\delta)$-node differentially private. If
$A,A'\in\mathcal G_n$ satisfy $d_v(A,A')\le 2$, then for every measurable
$S\subseteq [K]^n$, we have
\[
\mathbb P(\mathcal M(A)\in S)
\le
 e^{2\varepsilon}\mathbb P(\mathcal M(A')\in S)
+
(1+e^\varepsilon)\delta .
\]
\end{lemma}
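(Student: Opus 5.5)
The plan is the standard chaining argument for group privacy, done through an intermediate graph. Since $d_v(A,A')\le 2$ and $d_v$ is the shortest-path metric generated by node-adjacency, I will first fix an intermediate graph $A''\in\G_n$ with $A\sim_v A''$ and $A''\sim_v A'$. If $A$ and $A'$ differ in the neighborhoods of two vertices $u,w$, I take $A''$ to be $A$ with the edges incident to $u$ rewired to agree with $A'$. Then $A''$ differs from $A'$ only on edges incident to $w$. The degenerate cases are easy. If $d_v(A,A')=0$, then $A=A'$ and the bound is trivial. If $d_v(A,A')=1$, I take $A''=A'$, or equivalently apply the definition once, which gives a bound no worse than the claimed one because $e^{\varepsilon}\le e^{2\varepsilon}$ and $\delta\le(1+e^\varepsilon)\delta$.

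Next I will apply the $(\varepsilon,\delta)$-node DP definition twice. For any measurable $S$, the first application gives
\[
\Pbb(\mathcal M(A)\in S)\le e^{\varepsilon}\Pbb(\mathcal M(A'')\in S)+\delta .
\]
The second application, to the pair $A''\sim_v A'$, gives
\[
\Pbb(\mathcal M(A'')\in S)\le e^{\varepsilon}\Pbb(\mathcal M(A')\in S)+\delta .
\]
Substituting the second inequality into the first yields
\[
\Pbb(\mathcal M(A)\in S)\le e^{2\varepsilon}\Pbb(\mathcal M(A')\in S)+e^{\varepsilon}\delta+\delta ,
\]
which is exactly the claimed inequality with additive term $(1+e^\varepsilon)\delta$.

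I do not expect any real obstacle. The only point that needs care is justifying that $d_v(A,A')\le 2$ yields an intermediate graph adjacent to both endpoints. This follows directly from the definition of $d_v$ as a path metric; alternatively, it can be read off from the explicit construction of $A''$ by rewiring one vertex neighborhood at a time.
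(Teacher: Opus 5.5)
Your proposal is correct and matches the paper's proof: both pick an intermediate graph $H$ with $A\sim_v H$ and $H\sim_v A'$, apply $(\varepsilon,\delta)$-node DP to each adjacent pair, and substitute to get $e^{2\varepsilon}$ and the additive term $(1+e^\varepsilon)\delta$. Your explicit rewiring construction of the intermediate graph and your handling of the cases $d_v(A,A')\in\{0,1\}$ are extra detail that the paper leaves implicit.
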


\begin{proof}
Since $d_v(A,A')\le2$, there exists $H\in\mathcal G_n$ such that $A\sim_v H$ and $H\sim_v A'$. Applying $(\varepsilon,\delta)$-node DP twice, we obtain
\[
\mathbb P(\mathcal M(A)\in S)
\le
 e^\varepsilon \mathbb P(\mathcal M(H)\in S)+\delta
\]
and
\[
\mathbb P(\mathcal M(H)\in S)
\le
 e^\varepsilon \mathbb P(\mathcal M(A')\in S)+\delta.
\]
Substituting the second inequality into the first gives
\[
\mathbb P(\mathcal M(A)\in S)
\le
 e^{2\varepsilon}\mathbb P(\mathcal M(A')\in S)
+
(1+e^\varepsilon)\delta,
\]
as required.
\end{proof}

\begin{theorem}
\label{thm:approx-node-private-lb}
Fix $K\ge 2$ and assume $\frac{n}{K}\ge 2$. Let $\Theta=\Theta(n,K,a,b)$ be the homogeneous balanced SBM parameter class with $K$ communities, within-community edge probability $a/n$, across-community edge probability $b/n$, and balanced labelings in $\Sigma$.

For each $\theta\in\Theta$, let $\sigma_\theta\in\Sigma$ be a representative
ground-truth labeling, and let $\mathbb P_\theta$ and $\mathbb E_\theta$ denote the probability and
expectation under the SBM law corresponding to $\theta$. Then
\[
\inf_{\mathcal M:(\varepsilon,\delta)\text{-node-DP}}
\sup_{\theta\in\Theta}
\mathbb P_\theta\bigl(r(\sigma_\theta, \mathcal M(A))>0\bigr)
\ge
\left[
\frac{1-(1+e^\varepsilon)\delta}{1+e^{2\varepsilon}}
\right]_+
\]
and
\[
\inf_{\mathcal M:(\varepsilon,\delta)\text{-node-DP}}
\sup_{\theta\in\Theta}
\mathbb E_\theta r(\sigma_\theta, \mathcal M(A))
\ge
\frac1n
\left[
\frac{1-(1+e^\varepsilon)\delta}{1+e^{2\varepsilon}}
\right]_+,
\]
where $[x]_+:=\max\{x,0\}$. 
\end{theorem}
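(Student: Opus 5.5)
The plan is a two-point (Le Cam-type) argument combined with group privacy of size two (Lemma~\ref{lem:approx-group-privacy-two}). Pick a balanced labeling $\sigma$ and two vertices $i,j$ in different communities, and let $\sigma'$ be obtained from $\sigma$ by swapping the labels of $i$ and $j$. Then $\sigma'\in\Sigma$ and $[\sigma]\ne[\sigma']$. This needs $s=n/K\ge2$: since each community of $\sigma$ still contains a vertex other than $i,j$, no label permutation can map $\sigma'$ to $\sigma$. Define the exact-recovery success sets
\[
S:=\{\tau:\ r(\sigma,\tau)=0\},\qquad S':=\{\tau:\ r(\sigma',\tau)=0\}.
\]
These are disjoint orbits, so $\mathbb P(\mathcal M(A)\in S)+\mathbb P(\mathcal M(A)\in S')\le1$ for every input $A$.

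The next step is a coupling of the two SBM laws $\mathbb P_\sigma$ and $\mathbb P_{\sigma'}$. All edge probabilities not incident to $i$ or $j$ coincide. Generate $A\sim\mathbb P_\sigma$, and build $A'$ by taking the same edges away from $\{i,j\}$ and resampling the edges incident to $i$ and $j$ according to $\sigma'$. Independence across pairs makes this a valid coupling with $A'\sim\mathbb P_{\sigma'}$. Deterministically $d_v(A,A')\le2$, since rewiring the neighborhoods of $i$ and $j$ suffices. Conditioning on the realization $(A,A')$ and applying Lemma~\ref{lem:approx-group-privacy-two}, then averaging over the coupling, gives
\[
\mathbb P_{\sigma'}(\mathcal M\in S') \le e^{2\varepsilon}\mathbb P_{\sigma}(\mathcal M\in S')+(1+e^\varepsilon)\delta.
\]
Here $\mathbb P_\theta$ includes both the data and the mechanism's internal randomness.

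To combine, let $p:=\sup_\theta\mathbb P_\theta(r>0)$. Then $\mathbb P_{\sigma'}(\mathcal M\in S')\ge1-p$, and by disjointness $\mathbb P_\sigma(\mathcal M\in S')\le 1-\mathbb P_\sigma(\mathcal M\in S)\le p$. Substituting gives $1-p\le e^{2\varepsilon}p+(1+e^\varepsilon)\delta$, which rearranges to
\[
p\ge\frac{1-(1+e^\varepsilon)\delta}{1+e^{2\varepsilon}},
\]
and the positive part is trivial. For the expectation bound, $r(\sigma_\theta,\tau)>0$ forces $\dorb\ge1$, so $r\ge 1/n$ and $\mathbb E_\theta r\ge \frac1n\mathbb P_\theta(r>0)$. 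The first bound then yields the second.

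The main obstacle is making the coupling step rigorous. The DP inequality holds pointwise in $(A,A')$ with $d_v\le2$, and it has to be integrated against the joint law of the coupling. This works because the inequality is linear in the probabilities and $\delta$ is constant, so taking expectations preserves it. A second point to check is that each $\theta\in\Theta$ is identified with its labeling up to orbit, so that $\sigma$ and $\sigma'$ are genuinely distinct parameters in $\Theta$.
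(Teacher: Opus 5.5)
Your proposal is correct and follows essentially the same route as the paper. Both arguments swap the labels of two vertices in different communities, use $n/K\ge 2$ to get disjoint orbits, and couple the two SBM laws so that they share all edges not incident to the swapped vertices (hence $d_v\le 2$). Both then apply size-two group privacy pointwise and integrate over the coupling, and conclude via disjointness and $r\ge \frac1n\mathbf{1}\{r>0\}$. The only differences are cosmetic: you apply the inequality to $S'$ in the $\sigma'\to\sigma$ direction and bound through $p=\sup_\theta \mathbb P_\theta(r>0)$, whereas the paper uses the event $[\sigma]$ and the maximum of the two failure probabilities, and these are equivalent.
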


\begin{proof}
Let $\mathcal M:\mathcal G_n\to [K]^n$ be any $(\varepsilon,\delta)$-node differentially private
mechanism. Fix an arbitrary $\theta\in\Theta$, and let $\sigma:=\sigma_\theta\in\Sigma$ be a representative ground-truth labeling. Since $K\ge 2$ and every community has size at least
$n/K\ge 2$, we may choose two vertices $u\neq v$ in different communities. Write
\[
\sigma(u)=k,
\qquad
\sigma(v)=\ell,
\qquad k\neq \ell.
\]
Define a new labeling $\sigma':[n]\to[K]$ by swapping the labels of $u$ and $v$:
\[
\sigma'(u)=\ell,
\qquad
\sigma'(v)=k,
\qquad
\sigma'(i)=\sigma(i)
\quad\text{for all }i\notin\{u,v\}.
\]
Then $\sigma'\in\Sigma$.
Let $\theta'\in\Theta$ be the SBM parameter corresponding to the labeling $\sigma'$. Define the two exact-recovery target events
\[
E_\sigma:=\{\mathcal{M}(A)\in[\sigma]\},
\qquad
E_{\sigma'}:=\{\mathcal{M}(A)\in [\sigma']\},
\]
where $[\sigma] := \{\pi \circ \sigma: \pi \in \Sn_K\}$.

We first show that the two target orbits are disjoint. Suppose, for contradiction, that there
exists $\pi\in \Sn_K$ such that $\sigma'=\pi\circ\sigma$. Since the community $k$ contains at least two vertices, there exists $w\neq u$ such that
$\sigma(w)=k$. Since $w\notin\{u,v\}$, we also have $\sigma'(w)=\sigma(w)=k$. Therefore,
\[
k=\sigma'(w)=\pi(\sigma(w))=\pi(k).
\]
Similarly, because the community $\ell$ contains at least two vertices, there exists $w'\neq v$
such that $\sigma(w')=\ell$. Since $w'\notin\{u,v\}$, we have $\sigma'(w')=\sigma(w')=\ell$,
so
\[
\ell=\sigma'(w')=\pi(\sigma(w'))=\pi(\ell).
\]
Thus, we have $\pi(k)=k$ and $\pi(\ell)=\ell$. But then $\sigma'(u)=\pi(\sigma(u))=\pi(k)=k$, whereas by construction, $\sigma'(u)=\ell\neq k$. This is a contradiction. This implies that $[\sigma]\cap [\sigma']=\varnothing$, so $E_\sigma\cap E_{\sigma'}=\varnothing$.

We now couple the two SBM graph distributions.
Let \(\mu_\sigma\) and \(\mu_{\sigma'}\) denote the respective laws of the SBM adjacency matrices with true community assignments $\sigma$ and $\sigma'$. Let $A \sim \mu_\sigma$ and $A' \sim \mu_{\sigma'}$.
Construct the coupling $(A,A')$, as follows: For every unordered pair $\{i,j\}$ such that
\[
\{i,j\}\cap\{u,v\}=\varnothing,
\]
the labels of both endpoints are the same under $\sigma$ and $\sigma'$. Hence, the corresponding
edge probabilities under the two SBM laws are equal. For these edges, sample $A_{ij}$ once from
this common Bernoulli law and set $A'_{ij}=A_{ij}$. For edges incident to $u$ or $v$, sample the entries of $A$ according to their correct SBM
probabilities under $\sigma$, and sample the entries of $A'$ according to their correct SBM
probabilities under $\sigma'$. This construction has marginals $A\sim\mu_\sigma$ and $A'\sim\mu_{\sigma'}$.
%This construction gives the correct marginal laws:
%\[
%A\sim \mathrm{SBM}(\sigma),
%\qquad
%A'\sim \mathrm{SBM}(\sigma').
%\]
Note that $A$ and $A'$ can differ only on edges incident to $u$ or $v$, so $d_v(A,A')\le 2$ under this coupling.

Let $S\subseteq [K]^n$ be any measurable event. By Lemma~\ref{lem:approx-group-privacy-two},
conditionally on any coupled pair $(A,A')$ with $d_v(A,A')\le 2$, we have
\[
\mathbb P(\mathcal M(A)\in S\mid A,A')
\le
 e^{2\varepsilon}
\mathbb P(\mathcal M(A')\in S\mid A,A')
+
(1+e^\varepsilon)\delta.
\]
Taking an expectation over the coupling gives
\[
\mathbb P_{\theta}(\mathcal M(A)\in S)
\le
 e^{2\varepsilon}
\mathbb P_{\theta'}(\mathcal M(A)\in S)
+
(1+e^\varepsilon)\delta,
\]
where on the right-hand side, $A$ denotes a freshly drawn adjacency matrix from the SBM law
under $\theta'$. Apply this inequality with $S=[\sigma]$. Then
\[
\mathbb P_{\theta}(E_\sigma)
\le
 e^{2\varepsilon}
\mathbb P_{\theta'}(E_\sigma)
+
(1+e^\varepsilon)\delta.
\]
Since $E_\sigma$ and $E_{\sigma'}$ are disjoint, we have $\mathbb P_{\theta'}(E_\sigma)
\le 1-\mathbb P_{\theta'}(E_{\sigma'})$. Now define
\[
\Delta_\sigma
:=
\mathbb P_{\theta}\bigl(r(\sigma, \mathcal M(A))>0\bigr)
=
1-\mathbb P_{\theta}(E_\sigma), \qquad \Delta_{\sigma'}
:=
\mathbb P_{\theta'}\bigl(r(\sigma', \mathcal M(A))>0\bigr)
=
1-\mathbb P_{\theta'}(E_{\sigma'}).
\]
The preceding inequalities imply that $1-\Delta_\sigma
\le
 e^{2\varepsilon}\Delta_{\sigma'}
+
(1+e^\varepsilon)\delta$. Let $\Delta:=\max\{\Delta_\sigma,\Delta_{\sigma'}\}$. Since $\Delta_\sigma\le \Delta$ and $\Delta_{\sigma'}\le \Delta$, we have $1-(1+e^\varepsilon)\delta
\le
(1+e^{2\varepsilon})\Delta$. Hence,
\[
\Delta
\ge
\frac{1-(1+e^\varepsilon)\delta}{1+e^{2\varepsilon}}.
\]
Since $\Delta\ge 0$, this yields
\[
\Delta
\ge
\left[
\frac{1-(1+e^\varepsilon)\delta}{1+e^{2\varepsilon}}
\right]_+.
\]
Since $\theta,\theta'\in\Theta$, we have $\sup_{\theta\in\Theta}
\mathbb P_\theta\bigl(r(\sigma_\theta, \mathcal M(A))>0\bigr)
\ge
\Delta$. Thus, we have
\[
\sup_{\theta\in\Theta}
\mathbb P_\theta\bigl(r(\sigma_\theta, \mathcal M(A))>0\bigr)
\ge
\left[
\frac{1-(1+e^\varepsilon)\delta}{1+e^{2\varepsilon}}
\right]_+.
\]

It remains to prove the expected-mismatch lower bound. For any two labelings $\sigma,\tau:[n]\to[K]$,
if $r(\sigma,\tau)>0$, at least one vertex is misclassified after the best permutation of
labels, implying that
\[
r(\sigma,\tau)
\ge
\frac1n\mathbf 1\{r(\sigma,\tau)>0\}.
\]
Applying this bound under $\theta$ and $\theta'$ yields $\mathbb E_{\theta}r(\sigma, \mathcal M(A))
\ge
\frac1n\Delta_\sigma$ and $\mathbb E_{\theta'}r(\sigma', \mathcal M(A))
\ge
\frac1n\Delta_{\sigma'}$.
Taking the maximum of the two inequalities, we obtain
\[
\max\left\{
\mathbb E_{\theta}r(\sigma, \mathcal M(A)),
\mathbb E_{\theta'}r(\sigma', \mathcal M(A))
\right\}
\ge
\frac1n\Delta.
\]
Since both $\theta$ and $\theta'$ belong to $\Theta$, we have 
\[
\sup_{\theta\in\Theta}
\mathbb E_\theta r(\sigma_\theta, \mathcal M(A))
\ge
\frac1n
\left[
\frac{1-(1+e^\varepsilon)\delta}{1+e^{2\varepsilon}}
\right]_+.
\]
Taking the infimum over all $(\varepsilon,\delta)$-node-DP mechanisms proves the corresponding
minimax lower bounds.
\end{proof}

The following corollary shows that if both $\delta$ and the target expected misclassification risk are polynomially small, the privacy parameter must satisfy $\varepsilon= \Omega(\log(n))$.

\begin{corollary}
\label{cor:approx-dp-risk-necessary}
Let $\gamma_n$ be such that $1-n\gamma_n-\delta>0$. Suppose an $(\varepsilon,\delta)$-node-DP mechanism $\mathcal M:\mathcal{G}_n\to[K]^n$ satisfies
\[
\sup_{\theta\in\Theta}
\mathbb E_\theta r(\sigma_\theta, \mathcal M(A))\le \gamma_n.
\]
Then
\[
\varepsilon
\ge
\frac12
\log\left(
\frac{1-n\gamma_n-\delta}{n\gamma_n+\delta}
\right).
\]
In particular, if $\gamma_n\precsim \frac{1}{n f(n)}$ and $\delta\precsim n^{-c}$,
where $c>0$ and $f(n)\to\infty$, then for all sufficiently large $n$, we have
\[
\varepsilon
\succsim
\min\{\log(f(n)),\, c\log(n)\}.
\]
\end{corollary}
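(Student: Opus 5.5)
The plan is to derive the corollary directly from the second display of Theorem~\ref{thm:approx-node-private-lb}. Since $\mathcal M$ is $(\varepsilon,\delta)$-node-DP, we have
\[
\gamma_n \ge \sup_{\theta}\mathbb E_\theta r(\sigma_\theta,\mathcal M(A)) \ge \frac1n\left[\frac{1-(1+e^\varepsilon)\delta}{1+e^{2\varepsilon}}\right]_+ .
\]
Multiplying by $n$ gives $n\gamma_n(1+e^{2\varepsilon}) \ge 1-(1+e^{\varepsilon})\delta$; if the bracket is negative, the inequality is trivially true, so this step needs no case distinction. Next I will weaken $e^{\varepsilon}\le e^{2\varepsilon}$ (valid since $\varepsilon\ge0$) to obtain $n\gamma_n + n\gamma_n e^{2\varepsilon} \ge 1-\delta-\delta e^{2\varepsilon}$. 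Rearranging gives $e^{2\varepsilon}(n\gamma_n+\delta)\ge 1-n\gamma_n-\delta$. The right-hand side is positive by hypothesis, so the left-hand side is positive and we may take logarithms. This yields $\varepsilon\ge\frac12\log\frac{1-n\gamma_n-\delta}{n\gamma_n+\delta}$.

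For the asymptotic statement, I will use $n\gamma_n\le C/f(n)\to0$ and $\delta\le C'n^{-c}\to0$. Then for large $n$ the numerator is at least $1/2$, and the denominator is at most $2\max\{C/f(n),C'n^{-c}\}$. This gives
\[
\varepsilon\ge \tfrac12\log\frac{1}{4\max\{C/f(n),C'n^{-c}\}} = \tfrac12\min\{\log f(n)-\log(4C),\ c\log n-\log(4C')\}.
\]
Since both $\log f(n)\to\infty$ and $c\log n\to\infty$, the additive constants are absorbed for large $n$, which gives $\varepsilon\succsim\min\{\log f(n),c\log n\}$.

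There is no real obstacle here. The only points needing care are the monotonicity step $e^\varepsilon\le e^{2\varepsilon}$, which uses $\varepsilon\ge0$, and making sure the positivity hypothesis $1-n\gamma_n-\delta>0$ is invoked before taking logarithms.
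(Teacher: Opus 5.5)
Your proposal is correct and follows essentially the same route as the paper: you invoke the expected-risk bound of Theorem~\ref{thm:approx-node-private-lb}, use $e^{\varepsilon}\le e^{2\varepsilon}$ to get $(n\gamma_n+\delta)e^{2\varepsilon}\ge 1-n\gamma_n-\delta$, take logarithms under the positivity hypothesis, and then absorb constants in the asymptotic regime. Your treatment of the $[\cdot]_+$ bracket (via $[x]_+\ge x$) and the explicit constant bookkeeping in the last step are, if anything, slightly more careful than the paper's write-up.
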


\begin{proof}
By Theorem~\ref{thm:approx-node-private-lb}, we have
\[
\gamma_n
\ge
\frac1n
\left[
\frac{1-(1+e^\varepsilon)\delta}{1+e^{2\varepsilon}}
\right]_+,
\]
implying that $n\gamma_n(1+e^{2\varepsilon})+(1+e^\varepsilon)\delta\ge 1$. Since $e^\varepsilon\le e^{2\varepsilon}$, we obtain $n\gamma_n e^{2\varepsilon}+\delta e^\varepsilon
\le
(n\gamma_n+\delta)e^{2\varepsilon}$, so
\[
(n\gamma_n+\delta)e^{2\varepsilon}
\ge
1-n\gamma_n-\delta.
\]
If $1-n\gamma_n-\delta>0$, then
\[
\varepsilon
\ge
\frac12
\log\left(
\frac{1-n\gamma_n-\delta}{n\gamma_n+\delta}
\right).
\]

If $\gamma_n \precsim 1/(nf(n))$ and $\delta\precsim n^{-c}$, then $n\gamma_n+\delta
\le
C f(n)^{-1}+C n^{-c}$. Since $f(n)\to\infty$ and $n^{-c}\to 0$, we also have $1-n\gamma_n-\delta\ge 1/2$, for all
sufficiently large $n$. This implies that
\[
\varepsilon
\succsim
\log\left(\frac{1}{f(n)^{-1}+n^{-c}}\right)
\asymp 
\min\{\log(f(n)),\, c\log(n)\}.
\]
\end{proof}

Combining Corollaries \ref{cor:choice_M_approx_DP} and \ref{cor:approx-dp-risk-necessary} allows us to draw some useful conclusions about the minimax risk. Applying $c > 0$ and setting $\delta \asymp n^{-c}$, Corollary \ref{cor:approx-dp-risk-necessary} with $f(n) = n^{c}$, implies that any estimator whose expected misclassification minimax risk $\mathop{\inf}\limits_{\widehat\sigma:(\varepsilon, \delta)\text{-node-DP}}
\mathop{\sup}\limits_{\sigma_0\in\Sigma}
\mathbb E r(\sigma_0,\widehat\sigma)$ is less than $n^{-(1 + c)}$ must satisfy $\varepsilon \succsim \log(n)$. Conversely, Corollary \ref{cor:choice_M_approx_DP} shows that for $a \asymp K\log(n)$, for $A_0$ sufficiently large depending on $c$, and for $\varepsilon = L\log(n)$ (so that $\frac{1 + e^{\varepsilon}}{e^{\poly(n)}} \le \delta$), for sufficiently large $L$, the truncated sampler (1) is $(\varepsilon, \delta)$-node-DP, (2) runs in worst-case polynomial time, uniformly over the internal randomness of the sampler (with high-probability over SBM inputs), and (3) has expected misclassification minimax risk at most $n^{-(1 + c)}$. Since Assumption \ref{ass:mild-K} gives $\log(nK) \asymp \log(n)$, a privacy level $\varepsilon \asymp \log(n)$ is therefore both necessary and sufficient for the value of the expected misclassification minimax risk to fall below $n^{-(1 + c)}$.   

%\begin{remark}
%Consider the setting of Corollary \ref{cor:approx-dp-risk-necessary}. For expected mismatch, the two-point argument gives a logarithmic lower bound when
%$\gamma$ is polynomially smaller than $1/n$, for example $\gamma\le n^{-(1+p)}$ with $p>0$.
%If instead $\gamma\asymp 1/(n\log(n))$, this argument gives only
%$\varepsilon=\Omega(\log(\log(n)))$.
%\end{remark}

%%%%%

\section{Zero-concentrated differential privacy}
\label{AppZero}

In this appendix, we prove a zCDP analog of the
private lower bound (Theorem~\ref{thm:approx-node-private-lb}) for stochastic block model community recovery.
The conclusion is that using the notion of zCDP also leads to a lower bound of
\(\rho=\Omega(\log(n))\) for polynomially small exact-recovery failure, meaning the privacy parameter $\rho$ still needs to grow with $n$.

\subsection{Preliminaries}

For probability distributions $P$ and $Q$ defined on a finite output space, recall that the $\alpha$-Renyi divergence is defined by
\[
D_\alpha(P\|Q)
=
\frac{1}{\alpha-1}
\log\left(\sum_y P(y)^\alpha Q(y)^{1-\alpha}\right),
\]
where $\alpha > 1$. Also recall the KL divergence:
\[
D_{\mathrm{KL}}\bigl(P\,\|\,Q\bigr) = \sum_{y} P(y)\log\left(\frac{P(y)}{Q(y)}\right).
\]
%with the usual convention \(D_\alpha(P\|Q)=+\infty\) if \(P\not\ll Q\). 
We begin by defining the notion of a node-zCDP mechanisms on graph inputs, adapted from the standard definitions of zCDP in~\cite{bun2016concentrated}.

\begin{definition}
A randomized mechanism \(\mathcal M:\G_n\to [K]^n\) is \(\rho\)-\emph{node-zCDP} if, for
all \(A\sim_v A'\) and all \(\alpha>1\), we have
\[
D_\alpha\bigl(\Law(\mathcal M(A))\,\|\,\Law(\mathcal M(A'))\bigr)
\le
\alpha\rho,
\]    
where $\Law(\cdot)$ denotes the probability law.
\end{definition}

%\subsection{Auxiliary facts}

Since zCDP satisfies group privacy properties~\cite{bun2016concentrated}, so does node-zCDP:

\begin{lemma}
\label{lem:two-step-group-privacy}
Let \(\mathcal M:\G_n\to[K]^n\) be \(\rho\)-node-zCDP. If 
\(d_v(A,A')\le2\), then for every \(\alpha>1\), we have
\[
D_\alpha\bigl(\Law(\mathcal M(A))\,\|\,\Law(\mathcal M(A'))\bigr)
\le
4\alpha\rho.
\]
Moreover, $D_{\mathrm{KL}}\bigl(\Law(\mathcal M(A))\,\|\,\Law(\mathcal M(A'))\bigr)\le 4\rho$.
\end{lemma}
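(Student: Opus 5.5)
The plan is to follow the standard group-privacy argument for zCDP from \cite{bun2016concentrated}, specialized to a path of length two, and then obtain the KL bound as a limit. Since $d_v(A,A')\le 2$, there is an intermediate graph $H\in\G_n$ with $A\sim_v H$ and $H\sim_v A'$ (if $d_v\le1$ we may take $H=A$ or $H=A'$, and the bound follows directly since $\alpha\rho\le 4\alpha\rho$). Write $P=\Law(\mathcal M(A))$, $R=\Law(\mathcal M(H))$, and $Q=\Law(\mathcal M(A'))$. These are distributions on the finite set $[K]^n$.

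The key tool is the weak triangle inequality for R\'enyi divergences. For $\alpha>1$ and conjugate exponents $p,q>1$ with $1/p+1/q=1$,
\[
D_\alpha(P\|Q)\le \frac{\alpha-1/p}{\alpha-1}D_{p\alpha}(P\|R)+D_{\frac{q(p\alpha-1)}{p-1}}(R\|Q).
\]
This follows by writing $P^\alpha Q^{1-\alpha}=(P^\alpha R^{1-\alpha})\cdot(R^{\alpha-1}Q^{1-\alpha})$ and applying H\"older's inequality to the sum. I will either prove this directly or cite \cite[Lemma 2.3]{bun2016concentrated}.

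Next I plug in the node-zCDP bounds $D_\beta(P\|R)\le\beta\rho$ and $D_\gamma(R\|Q)\le\gamma\rho$, valid for all orders $\beta,\gamma>1$. This gives the upper bound
\[
\rho\Bigl[\frac{p\alpha(\alpha-1/p)}{\alpha-1}+\frac{q(p\alpha-1)}{p-1}\Bigr].
\]
Choosing $p=q=2$ yields
\[
\rho\Bigl[\frac{\alpha(2\alpha-1)}{\alpha-1}+2(2\alpha-1)\Bigr],
\]
which is not exactly $4\alpha\rho$. So the main obstacle is optimizing over $p$ to reach the clean constant $4\alpha\rho=(\sqrt{\rho}+\sqrt{\rho})^2\alpha$. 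Following Bun--Steinke, I will set $p$ so that the two contributions balance, which asymptotically gives $(\sqrt{\rho_1}+\sqrt{\rho_2})^2\alpha$.

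If that exact optimization is delicate for some $\alpha$, I will instead use the cleaner route in \cite[Proposition 1.9/27]{bun2016concentrated}: $k$-fold group privacy for $\rho$-zCDP yields $k^2\rho$-zCDP. With $k=2$ this gives exactly $4\rho\alpha$, and I will invoke that result directly, checking that its proof only uses neighbor chains, which node-adjacency provides.

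For the KL statement, I will use the fact that $D_{\mathrm{KL}}(P\|Q)=\lim_{\alpha\downarrow1}D_\alpha(P\|Q)$ on a finite space. This holds by L'H\^opital's rule applied to $\log\sum_y P(y)^\alpha Q(y)^{1-\alpha}$, and it also covers the case of infinite divergence when $P\not\ll Q$, though the finite bound rules that case out. Letting $\alpha\downarrow1$ in $D_\alpha(P\|Q)\le4\alpha\rho$ then gives $D_{\mathrm{KL}}(P\|Q)\le4\rho$.
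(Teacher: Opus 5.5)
Your fallback is exactly the paper's proof. The paper invokes Bun--Steinke's group-privacy result for a chain $A\sim_v H\sim_v A'$ to get $D_\alpha\le 4\alpha\rho$; that result uses only the neighbor relation, so it transfers to node adjacency. It then lets $\alpha\downarrow 1$ on the finite space $[K]^n$ to get the KL bound, just as you do. With that route your proposal is complete.

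Your primary H\"older route, however, would not go through as written, for two reasons.

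\textbf{The order of the second divergence is wrong.} Writing $\sum_y P^\alpha Q^{1-\alpha}=\sum_y R\,(P/R)^\alpha(R/Q)^{\alpha-1}$ and applying H\"older under $R$ gives
\[
D_\alpha(P\|Q)\le \frac{\alpha-1/p}{\alpha-1}\,D_{p\alpha}(P\|R)+D_{\frac{p\alpha-1}{p-1}}(R\|Q),
\]
since $q(\alpha-1)+1=\frac{p\alpha-1}{p-1}$. Your order carries an extra factor $q$. Your inequality is still true, because R\'enyi divergence is nondecreasing in its order, but it is too lossy. After inserting the zCDP bounds, its minimum over $p$ at $\alpha=2$ is about $12\rho$, not the required $8\rho$.

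\textbf{No fixed $p$ works.} The first contribution $\alpha(p\alpha-1)\rho/(\alpha-1)$ blows up as $\alpha\downarrow 1$, which would also ruin your KL limit.

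The fix is an $\alpha$-dependent exponent. Use the corrected inequality with $p=\frac{2\alpha-1}{\alpha}$ and $q=\frac{2\alpha-1}{\alpha-1}$. The orders become $2\alpha-1$ and $2\alpha$, and the two terms are exactly $\frac{2\alpha}{2\alpha-1}(2\alpha-1)\rho=2\alpha\rho$ and $2\alpha\rho$. This is the $k=2$ case of Bun--Steinke's triangle-like inequality. The constant $(\sqrt{\rho_1}+\sqrt{\rho_2})^2$ is attained exactly for every $\alpha$, not only asymptotically as you suggest.
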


\begin{proof}
The group privacy of size two claim follows from  \cite[Proposition 5.3]{bun2016concentrated}. Since the output space \([K]^n\) is finite, we have $D_{\mathrm{KL}}(P_0\|P_2) = \lim_{\alpha\rightarrow1}D_\alpha(P_0\|P_2) \le 4\rho$, as required.

\begin{comment}
It is enough to consider a path \(G_0\sim_v G_1\sim_v G_2\). Let $P_i:=\Law(M(G_i))$, for $i=0,1,2$. The zCDP assumption implies the relevant absolute-continuity relations. Let
\[
X:=\frac{dP_0}{dP_1},
\qquad
Y:=\frac{dP_1}{dP_2}.
\]
Fix \(\alpha>1\). Since \(dP_0/dP_2=XY\), we have
\[
\exp\{(\alpha-1)D_\alpha(P_0\|P_2)\}
=
\Ebb_{P_2}\bigl[(XY)^\alpha\bigr]
=
\Ebb_{P_1}\bigl[X^\alpha Y^{\alpha-1}\bigr].
\]
Apply Holder's inequality with conjugate exponents
\[
p=\frac{2\alpha-1}{\alpha},
\qquad
q=\frac{2\alpha-1}{\alpha-1}.
\]
Then
\[
\Ebb_{P_1}\bigl[X^\alpha Y^{\alpha-1}\bigr]
\le
\left(\Ebb_{P_1}X^{2\alpha-1}\right)^{\alpha/(2\alpha-1)}
\left(\Ebb_{P_1}Y^{2\alpha-1}\right)^{(\alpha-1)/(2\alpha-1)}.
\]
Now
\[
\Ebb_{P_1}X^{2\alpha-1}
=
\exp\{(2\alpha-2)D_{2\alpha-1}(P_0\|P_1)\}
\le
\exp\{(2\alpha-2)(2\alpha-1)\rho\},
\]
because \(G_0\sim_v G_1\) and \(M\) is \(\rho\)-node-zCDP. Also,
\[
\Ebb_{P_1}Y^{2\alpha-1}
=
\Ebb_{P_2}Y^{2\alpha}
=
\exp\{(2\alpha-1)D_{2\alpha}(P_1\|P_2)\}
\le
\exp\{(2\alpha-1)(2\alpha)\rho\},
\]
because \(G_1\sim_v G_2\). Combining the last three displays gives
\[
(\alpha-1)D_\alpha(P_0\|P_2)
\le
\frac{\alpha}{2\alpha-1}(2\alpha-2)(2\alpha-1)\rho
+
\frac{\alpha-1}{2\alpha-1}(2\alpha-1)(2\alpha)\rho.
\]
The right-hand side equals
\[
2\alpha(\alpha-1)\rho+2\alpha(\alpha-1)\rho
=
4\alpha(\alpha-1)\rho.
\]
Dividing by \(\alpha-1\) yields
\[
D_\alpha(P_0\|P_2)\le4\alpha\rho.
\]
\end{comment}
\end{proof}

We will use the fact that the KL divergence is convex:

%Now we state the convexity of KL. This is Theorem 2.7.2 in \cite{cover1999elements}, and is proved for mixtures of $2$ distributions, in the discrete case. We give a short proof in the measure-theoretic sense.

\begin{lemma}[Theorem 2.7.2 of \cite{cover1999elements}]
\label{lem:mixture-kl}
Let \(\nu\) be a probability measure on an index space \(\mathcal Z\), and
let \(\{P_z\}_{z\in\mathcal Z}\), \(\{Q_z\}_{z\in\mathcal Z}\) be probability
kernels on a common finite space. Define
\[
P:=\int P_z\,\nu(dz),
\qquad
Q:=\int Q_z\,\nu(dz).
\]
Then
\[
D_{\mathrm{KL}}(P\|Q)
\le
\int D_{\mathrm{KL}}(P_z\|Q_z)\,\nu(dz).
\]
\end{lemma}

%\begin{proof}
%Let \(\widetilde P\) and \(\widetilde Q\) be the joint laws on
%\(\mathcal Z\times\mathcal Y\) defined by
%\[
%\widetilde P(dz,dy)=\nu(dz)P_z(dy),
%\qquad
%\widetilde Q(dz,dy)=\nu(dz)Q_z(dy).
%\]
%Then
%\[
%D_{\mathrm{KL}}(\widetilde P\|\widetilde Q)
%=
%\int D_{\mathrm{KL}}(P_z\|Q_z)\,\nu(dz).
%\]
%The marginals of \(\widetilde P,\widetilde Q\) on \(\mathcal Y\) are \(P,Q\),
%respectively. Since marginalization is a measurable map and KL divergence cannot increase under measurable maps, $D_{\mathrm{KL}}(P\|Q) \le D_{\mathrm{KL}}(\widetilde P\|\widetilde Q)$, which proves the claim.
%\end{proof}

We will also use the Bretagnolle--Huber testing inequality:

\begin{lemma}[Theorem 14.2 of \cite{lattimore2020bandit}]
\label{lem:bh}
For any two probability measures \(P\) and \(Q\) on a common finite space and any event
\(B\), we have
\[
P(B^c)+Q(B)
\ge
\frac12\exp\{-D_{\mathrm{KL}}(P\|Q)\}.
\]
\end{lemma}

\begin{comment}
\begin{proof}
Let \(\mu=P+Q\), and write \(p=dP/d\mu\), \(q=dQ/d\mu\). Then
\[
P(B^c)+Q(B)
=
\int_{B^c}p\,d\mu+\int_B q\,d\mu
\ge
\int \min\{p,q\}\,d\mu.
\]
Set $m:=\int\min\{p,q\}\,d\mu$, and $H:=\int\sqrt{pq}\,d\mu$. By Cauchy--Schwarz,
\[
H^2 \le
\left(\int\min\{p,q\}\,d\mu\right)
\left(\int\max\{p,q\}\,d\mu\right).
\]
Since
\[
\int\max\{p,q\}\,d\mu
=
\int(p+q-\min\{p,q\})\,d\mu
=
2-m
\le2,
\]
we get \(H^2\le2m\), hence \(m\ge H^2/2\). Next, the order-\(1/2\) Renyi divergence satisfies $D_{1/2}(P\|Q)=-2\log H$, and Renyi divergence is monotone in its order, so
\[
-2\log H
=
D_{1/2}(P\|Q)
\le
D_{\mathrm{KL}}(P\|Q).
\]
Therefore \(H^2\ge \exp\{-D_{\mathrm{KL}}(P\|Q)\}\). Combining this with
\(m\ge H^2/2\) proves the desired result.
\end{proof}
\end{comment}

\subsection{Lower bound}

We begin with the statement of the lower bound on the failure probability of $\rho$-node-zCDP algorithms for exact recovery, followed by a corollary which translates the result into a lower bound on $\rho$.

%For \(\sigma\in\Sigma\), let \(A\sim %\mathrm{SBM}(\sigma,a,b)\) mean that the
%upper-triangular entries are independent and
%\[
%\Pbb_\sigma(A_{ij}=1)
%=
%\begin{cases}
%a/n, & \sigma(i)=\sigma(j),\\
%b/n, & \sigma(i)\neq \sigma(j),
%\end{cases}
%\qquad 1\le i<j\le n,
%\]
%where \(0\le b<a\le n\). The exact numerical values of \(a,b\) will not matter for the lower bound. 

\begin{theorem}
\label{thm:zcdp-lower-bound}
Fix $K\ge 2$ and assume $\frac{n}{K}\ge 2.$ Let $\Theta=\Theta(n,K,a,b)$ be the homogeneous balanced SBM parameter class with
$K$ communities, within-community edge probability $a/n$, across-community edge probability
$b/n$, and balanced labelings in $\Sigma$. For each $\theta\in\Theta$, let $\sigma_\theta\in\Sigma$ be a representative ground-truth labeling, and let $\mathbb P_\theta$ and $\mathbb E_\theta$ denote the corresponding SBM probability and expectation.
%Define
%\[
%\delta_\rho(M)
%:=
%\sup_{\sigma\in\Sigma}
%\Pbb_\sigma\bigl(r(\sigma,M(A))>0\bigr),
%\]
%and
%\[
%R_\rho(M)
%:=
%\sup_{\sigma\in\Sigma}
%\Ebb_\sigma\bigl[r(\sigma,M(A))\bigr].
%\]
Then
\[
\inf_{\mathcal M:\,\rho\text{-node-zCDP}}
\sup_{\theta\in\Theta}
\Pbb_\theta\bigl(r(\sigma_\theta, \mathcal M(A))>0\bigr)
\ge
\frac14e^{-4\rho},
\]
and
\[
\inf_{\mathcal M:\,\rho\text{-node-zCDP}}
\sup_{\theta\in\Theta}
\Ebb_\theta\bigl[r(\sigma_\theta, \mathcal M(A))\bigr]
\ge
\frac{1}{4n}e^{-4\rho}.
\]
\end{theorem}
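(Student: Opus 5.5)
We mirror the two-point argument from the proof of Theorem~\ref{thm:approx-node-private-lb}, replacing the group-privacy inequality of Lemma~\ref{lem:approx-group-privacy-two} by a KL bound. Fix any $\rho$-node-zCDP mechanism $\mathcal M$ and any $\theta\in\Theta$ with representative $\sigma=\sigma_\theta$.

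\textbf{Step 1: the two hypotheses.} As in that proof, we pick vertices $u,v$ in distinct communities and let $\sigma'$ swap their labels. Then $\sigma'\in\Sigma$, and $[\sigma]\cap[\sigma']=\varnothing$; this uses $n/K\ge2$ and is shown verbatim there. Let $\theta'$ be the parameter corresponding to $\sigma'$.

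\textbf{Step 2: KL bound between output laws.} We use the same coupling $(A,A')$ with $A\sim\mu_\sigma$, $A'\sim\mu_{\sigma'}$, in which edges not touching $\{u,v\}$ are shared, so that $d_v(A,A')\le2$ almost surely. Let $\nu$ be the coupling law, and set $P_z:=\Law(\mathcal M(A)\mid z)$ and $Q_z:=\Law(\mathcal M(A')\mid z)$ for $z=(A,A')$. The internal randomness of $\mathcal M$ is independent of the input, so these are probability kernels on the finite space $[K]^n$. Their mixtures are $P=\Law_\theta(\mathcal M(A))$ and $Q=\Law_{\theta'}(\mathcal M(A))$. Lemma~\ref{lem:two-step-group-privacy} gives $D_{\mathrm{KL}}(P_z\|Q_z)\le4\rho$ for every $z$ in the support of $\nu$. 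Convexity (Lemma~\ref{lem:mixture-kl}) then yields $D_{\mathrm{KL}}(P\|Q)\le4\rho$.

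\textbf{Step 3: testing.} Apply Lemma~\ref{lem:bh} with the event $B=\{\mathcal M(A)\in[\sigma]\}$. This gives
\[
\Pbb_\theta(\mathcal M(A)\notin[\sigma])+\Pbb_{\theta'}(\mathcal M(A)\in[\sigma])\ge\tfrac12e^{-4\rho}.
\]
The first term equals $\Delta_\sigma=\Pbb_\theta(r(\sigma,\mathcal M(A))>0)$. Since the orbits are disjoint, the second term is at most $\Pbb_{\theta'}(\mathcal M(A)\notin[\sigma'])=\Delta_{\sigma'}$. Hence $2\max\{\Delta_\sigma,\Delta_{\sigma'}\}\ge\frac12e^{-4\rho}$, so
\[
\sup_{\theta}\Pbb_\theta(r(\sigma_\theta,\mathcal M(A))>0)\ge\tfrac14e^{-4\rho}.
\]

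\textbf{Step 4: expected risk.} We use $r(\sigma,\tau)\ge\frac1n\ind\{r(\sigma,\tau)>0\}$ under both $\theta$ and $\theta'$, exactly as before. This gives the bound $\frac{1}{4n}e^{-4\rho}$. Finally we take the infimum over mechanisms.

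The only delicate point is Step 2: justifying that conditioning on the coupled pair produces kernels whose mixtures are exactly the two output laws. This holds because $\mathcal M$'s randomness is independent of the input. The rest transfers directly from the approximate-DP proof.
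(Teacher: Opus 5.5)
Your proposal is correct and matches the paper's proof essentially step for step. Both use the same label swap with disjoint orbits, the same coupling with $d_v(A,A')\le 2$, and the bound $D_{\mathrm{KL}}\le 4\rho$ obtained from two-step group privacy plus convexity of KL over the coupling. Both then apply Bretagnolle--Huber to the event $\{\mathcal M(A)\in[\sigma]\}$ and finish with $r\ge \frac{1}{n}\mathbf{1}\{r>0\}$.
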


\begin{proof}
Let \(\mathcal M:\G_n\to[K]^n\) be any \(\rho\)-node-zCDP
mechanism. Taking the infimum over $\mathcal M$ at the end will give the desired minimax lower bounds.

First, we construct two disjoint exact-recovery targets. Fix any $\theta \in \Theta$, and let \(\sigma := \sigma_\theta\in\Sigma\) be a representative ground-truth labeling. Since \(K\ge2\) and each community has at least
two vertices, choose vertices \(u,v\in[n]\) in different communities. Write
\[
\sigma(u)=k,\qquad \sigma(v)=\ell,\qquad k\neq \ell.
\]
Define \(\sigma'\in[K]^n\) by swapping the labels of \(u\) and \(v\):
\[
\sigma'(u)=\ell,\qquad
\sigma'(v)=k,\qquad
\sigma'(i)=\sigma(i)\quad\text{for }i\notin\{u,v\}.
\]
The community sizes of \(\sigma'\) are the same as those of \(\sigma\), so
\(\sigma'\in\Sigma\). Let $\theta' \in \Theta$ be the SBM parameter corresponding to the labeling $\sigma'$. We claim that the two label orbits are disjoint: $[\sigma]\cap[\sigma']=\varnothing$. Indeed, suppose not. Then \(\sigma'\in[\sigma]\), so
\(\sigma'=\pi\circ\sigma\) for some \(\pi\in\mathfrak S_K\). Since community \(k\) has at
least two vertices, choose \(w\neq u\) with \(\sigma(w)=k\). Then \(w\neq v\),
so \(\sigma'(w)=\sigma(w)=k\). Therefore, we have
\[
k=\sigma'(w)=\pi(\sigma(w))=\pi(k),
\]
so \(\pi(k)=k\). Similarly, because community \(\ell\) has at least two
vertices, choose \(w'\neq v\) with \(\sigma(w')=\ell\). Then
\(\sigma'(w')=\sigma(w')=\ell\), so \(\pi(\ell)=\ell\). But then
\[
\sigma'(u)=\pi(\sigma(u))=\pi(k)=k,
\]
contradicting the fact that \(\sigma'(u)=\ell\). Thus, the orbits are disjoint. Let $E:=[\sigma]$ and $E':=[\sigma']$.
These are the exact-recovery events in output space for \(\sigma\) and
\(\sigma'\), respectively, and \(E\cap E'=\varnothing\).

Next, let \(\mu_\sigma\) and \(\mu_{\sigma'}\) be the respective laws of the SBM adjacency matrices with true community assignments $\sigma$ and $\sigma'$. We construct a
coupling \(\nu\) of \(A\sim\mu_\sigma\) and \(A'\sim\mu_{\sigma'}\): For every unordered pair \(\{i,j\}\) with
\(\{i,j\}\cap\{u,v\}=\varnothing\), the labels of both endpoints are unchanged
by the swap. Hence, the edge probability for \(\{i,j\}\) is the same under
\(\sigma\) and \(\sigma'\). Sample one Bernoulli variable with this common
probability and set $A_{ij}=A'_{ij}$. For all edges incident to \(u\) or \(v\), sample the entries of \(A\) and
\(A'\) independently over edges, with their correct SBM marginal laws under
\(\sigma\) and \(\sigma'\), respectively. By construction, we have \(A\sim\mu_\sigma\) and \(A'\sim\mu_{\sigma'}\), and \(A\) and \(A'\)
can differ only on edges incident to \(u\) or \(v\). Therefore, we have $d_v(A,A')\le2$.

Using the node-zCDP property of $\mathcal M$, we now bound the KL divergence between the output laws $\Law_{\sigma}(\mathcal M(A))$ and $\Law_{\sigma'}(\mathcal M(A))$. For a fixed graph \(G\), let \(K_G:=\Law(\mathcal M(G))\), where the law is over the internal randomness of \(\mathcal M\). Let
\[
P:=\Law_{\sigma}(\mathcal M(A)),
\qquad
Q:=\Law_{\sigma'}(\mathcal M(A)).
\]
Equivalently, under the coupling \(\nu\), we have
\[
P=\int K_A\,\nu(dA,dA'),
\qquad
Q=\int K_{A'}\,\nu(dA,dA').
\]
For every coupled pair \((A,A')\), we have \(d_v(A,A')\le2\). Hence, by
Lemma~\ref{lem:two-step-group-privacy}, we have $D_{\mathrm{KL}}(K_A\|K_{A'}) \le 4\rho$. Using Lemma~\ref{lem:mixture-kl}, we have
\[
D_{\mathrm{KL}}(P\|Q)
\le
\int D_{\mathrm{KL}}(K_A\|K_{A'})\,\nu(dA,dA')
\le
4\rho.
\]

We now test between the two exact-recovery targets $\Pbb_\theta\bigl(r(\sigma,\mathcal{M}(A))>0\bigr)$ and $\Pbb_{\theta'}\bigl(r(\sigma',\mathcal{M}(A))>0\bigr)$. Define
\[
\delta_\sigma
:=
P(E^c)
=
\Pbb_\theta\bigl(r(\sigma, \mathcal M(A))>0\bigr),
\]
and
\[
\delta_{\sigma'}
:=
Q((E')^c)
=
\Pbb_{\theta'}\bigl(r(\sigma', \mathcal M(A))>0\bigr).
\]
Since \(E\cap E'=\varnothing\), we have $Q(E)\le Q((E')^c)=\delta_{\sigma'}$. Apply Lemma~\ref{lem:bh} with the event \(B=E\). Then
\[
\delta_\sigma+Q(E)
=
P(E^c)+Q(E)
\ge
\frac12\exp\{-D_{\mathrm{KL}}(P\|Q)\}
\ge
\frac12e^{-4\rho}.
\]
Since \(Q(E)\le\delta_{\sigma'}\), this implies $\delta_\sigma+\delta_{\sigma'} \ge \frac12e^{-4\rho}$, so $\max\{\delta_\sigma,\delta_{\sigma'}\} \ge \frac14e^{-4\rho}$. Both \(\sigma\) and \(\sigma'\) belong to \(\Sigma\), so
\begin{equation}
\label{EqnDeltaRho}
\sup_{\theta\in\Theta}
\Pbb_\theta\bigl(r(\sigma_\theta, \mathcal M(A))>0\bigr)
\ge
\frac14e^{-4\rho}.
\end{equation}

It remains to prove the expected-mismatch lower bound. For every \(\tau\in\Sigma\), we have
%\(n \cdot r(\tau,M(A)) \in \{0,1,\ldots,n\}\). Hence, we have
\[
r(\tau, \mathcal M(A))
\ge
\frac1n\,\1\{r(\tau, \mathcal M(A))>0\}.
\]
Taking an expectation and then a supremum over \(\theta\in\Theta\), and using inequality~\eqref{EqnDeltaRho}, we obtain
\[
\sup_{\theta\in\Theta}
\Ebb_\theta r(\sigma_\theta, \mathcal M(A))
\ge
\frac1n
\sup_{\theta\in\Theta}
\Pbb_\theta(r(\sigma_\theta, \mathcal M(A))>0)
\ge
\frac{1}{4n}e^{-4\rho}.
\]
Taking the infimum over all \(\rho\)-node-zCDP mechanisms then proves the corresponding minimax lower bounds.
\end{proof}

The following corollary shows that polynomially small exact-recovery failure or expected misclassification risk requires $\rho=\Omega(\log(n))$:

\begin{corollary}
\label{cor:rho-log-n}
Fix \(c>0\). Under the assumptions of Theorem~\ref{thm:zcdp-lower-bound}:

\begin{enumerate}
\item If a $\rho$-node-zCDP mechanism $\mathcal M:\mathcal{G}_n\to[K]^n$ satisfies $\sup_{\theta\in\Theta}
\Pbb_\theta(r(\sigma_\theta, \mathcal M(A))>0) \le n^{-c}$, then $$\rho \ge \frac{c}{4}\log(n)-\frac14\log(4).$$

\item If a $\rho$-node-zCDP mechanism $\mathcal M:\mathcal{G}_n\to[K]^n$ satisfies $\sup_{\theta\in\Theta}
\Ebb_\theta r(\sigma_\theta, \mathcal M(A)) \le n^{-(1+c)}$, then $$\rho \ge \frac{c}{4}\log(n)-\frac14\log(4).$$
\end{enumerate}
\end{corollary}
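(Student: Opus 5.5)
The corollary follows by inverting the two lower bounds of Theorem~\ref{thm:zcdp-lower-bound}. No new probabilistic argument is needed; the plan is to combine each hypothesized upper bound with the matching minimax lower bound and solve for $\rho$.

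For part 1, fix a $\rho$-node-zCDP mechanism $\mathcal M$ with $\sup_{\theta}\Pbb_\theta(r(\sigma_\theta,\mathcal M(A))>0)\le n^{-c}$. The first display of Theorem~\ref{thm:zcdp-lower-bound} holds for the infimum over all such mechanisms, so in particular it holds for this $\mathcal M$. This gives
\[
\tfrac14 e^{-4\rho}\le \sup_{\theta}\Pbb_\theta(r(\sigma_\theta,\mathcal M(A))>0)\le n^{-c}.
\]
Taking logarithms yields $-\log 4-4\rho\le -c\log n$, that is, $\rho\ge \frac c4\log n-\frac14\log 4$.

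For part 2, apply the second display of the theorem in the same way:
\[
\tfrac{1}{4n}e^{-4\rho}\le \sup_\theta \Ebb_\theta r(\sigma_\theta,\mathcal M(A))\le n^{-(1+c)}.
\]
Multiplying through by $n$ gives $\frac14 e^{-4\rho}\le n^{-c}$, and the same logarithmic rearrangement produces the identical bound.

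The main point to check is that Theorem~\ref{thm:zcdp-lower-bound} applies to the specific mechanism, and not only to the infimum. This is immediate because the theorem's proof fixes an arbitrary $\rho$-node-zCDP $\mathcal M$ before taking the infimum. I would also note that the standing assumptions $K\ge2$ and $n/K\ge2$ are inherited, as stated in the corollary. Beyond this, the argument is just monotonicity of $\log$.
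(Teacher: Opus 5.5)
Your proposal is correct and matches the paper's proof: both invert the two lower bounds of Theorem~\ref{thm:zcdp-lower-bound}, take logarithms in the first case, and multiply by $n$ before doing the same in the second. Your worry about applying the theorem to a specific $\mathcal M$ is unnecessary, since a lower bound on an infimum already bounds each individual mechanism.
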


\begin{proof}
For exact recovery, Theorem~\ref{thm:zcdp-lower-bound} gives $e^{-4\rho}\le4n^{-c}$. Taking logarithms, we obtain $-4\rho\le \log4-c\log(n)$, from which the first claim follows.

For expected misclassification, Theorem~\ref{thm:zcdp-lower-bound} gives $n^{-(1+c)} \ge \frac{1}{4n}e^{-4\rho}$. Multiplying by \(n\), we obtain $n^{-c} \ge \frac14e^{-4\rho}$, and the same calculation yields the same lower bound on \(\rho\).
\end{proof}

%%%%%

\section{Proofs for Section~\ref{SecScore}}

\subsection{Proof of Lemma~\ref{lem:truth-agreement}}
\label{AppLemTruth}

First, choose $q_i=0$ for all $i$ and $z_{ij}=1$ for all $ij\in E(A)$.  This is in the feasible set~\eqref{eq:H-constraints}; hence, for every $Y$, we have
\begin{equation}
\label{EqnHAD}
\Hin_{A,D}(Y)\le \sum_{ij\in E(A)}Y_{ij}.
\end{equation}
Taking $Y=Y^\sigma$ and subtracting the same penalty gives inequality~\eqref{eq:one-sided}.

Now suppose $\cE_D$ holds and take any feasible $(q,z)$ in the set~\eqref{eq:H-constraints}.  Since $z_{ij}\ge 1-q_i-q_j$ and $Y^{\sigma_0}_{ij}\ge 0$, we have
\begin{align*}
\sum_{ij\in E(A)}z_{ij}Y^{\sigma_0}_{ij}+D\sum_i q_i
&\ge
\sum_{ij\in E(A)}(1-q_i-q_j)Y^{\sigma_0}_{ij}+D\sum_iq_i\\
&=
\sum_{ij\in E(A)}Y^{\sigma_0}_{ij}
+
\sum_i q_i\left(D-
\sum_{j\ne i}A_{ij}\ind\{\sigma_0(j)=\sigma_0(i)\}
\right) \ge
\sum_{ij\in E(A)}Y^{\sigma_0}_{ij}.
\end{align*}
Taking the minimum over feasible $(q,z)$ gives $\Hin_{A,D}(Y^{\sigma_0})\ge \sum_{ij\in E(A)}Y^{\sigma_0}_{ij}$. Combining with inequality~\eqref{EqnHAD}, this is actually an equality. Subtracting the penalty gives $\Tin_{A,D}(\sigma_0)=T_A(\sigma_0)$. Finally, note that if $\Tin_{A,D}(\sigma)\ge\max_{\tau\in\Sigma}\Tin_{A,D}(\tau)-u$, we have
\[
T_A(\sigma)
\ge
\Tin_{A,D}(\sigma)
\ge
\max_\tau \Tin_{A,D}(\tau)-u
\ge
\Tin_{A,D}(\sigma_0)-u
=
T_A(\sigma_0)-u.
\]

%%%%%

\subsection{Proof of Lemma~\ref{lem:global-sensitivity}}
\label{AppLemGlobal}

It suffices to prove the claim for node-adjacent graphs. Suppose $A$ and $A'$ differ only in edges incident to a vertex $v$.  Fix $Y\in[0,1]^{n\times n}$ and let $(q,z)$ be feasible for $\Hin_{A,D}(Y)$.  Construct $(q',z')$ for $A'$, as follows:  Set $q'_v=1$ and $q'_i=q_i$ for $i\ne v$.  For every edge $ij\in E(A')$ not incident to $v$, set $z'_{ij}=z_{ij}$; such an edge also belongs to $E(A)$.  For every edge $vj\in E(A')$ incident to $v$, set $z'_{vj}=0$.

The constraints are feasible.  Nonincident constraints are inherited.  If $vj\in E(A')$, then
\[
        1-q'_v-q'_j=1-1-q'_j=-q'_j\le 0=z'_{vj}.
\]
Thus, $(q',z')$ is feasible for $\Hin_{A',D}(Y)$.  The edge terms corresponding to new incident edges have $z'=0$, nonincident terms are unchanged, and the only possible increase in the deletion penalty is
\[
        D(q'_v-q_v)=D(1-q_v)\le D.
\]
Hence, we have $\Hin_{A',D}(Y)\le \Hin_{A,D}(Y)+D$, implying that $|\Hin_{A,D}(Y)-\Hin_{A',D}(Y)|\le D$, by symmetry. The penalty $-\lambda\sum_{i<j}Y_{ij}$ is independent of $A$. Plugging in $Y=Y^\sigma$ proves the node-adjacent case, and telescoping over a shortest node path proves the claim.

The privacy claim follows immediately from privacy of the exponential mechanism.

%%%%%

\section{Proofs for Section~\ref{SecSampling}}

\subsection{Proof of Lemma~\ref{lem:eroded-sdp}}
\label{AppLemEroded}

Note that for $Y\in\cC$, we have
\[
0 \le Y_{ij} \le \sqrt{Y_{ii} Y_{jj}} = 1, \qquad \forall i,j.
\]
Since $Z_{ij}\in\{0,1\}$, we can write
\begin{equation}
\|Y-Z\|_1
=2\sum_{i<j:Z_{ij}=1}(1-Y_{ij})
+2\sum_{i<j:Z_{ij}=0}Y_{ij},
\label{eq:l1-affine}
\end{equation}
which is affine in $Y$ over $\cC$.

It remains to represent the hypograph of $\Hin_{A,D}$.  For fixed $Y$, the LP \eqref{eq:H-tilde}--\eqref{eq:H-constraints} is feasible and bounded.  Introduce nonnegative dual variables $\alpha_i\ge0$, $\beta_{ij}\ge0$, and $\zeta_{ij}\ge0$ for the constraints
\[
        q_i\le 1,
        \qquad
        z_{ij}\le 1,
        \qquad
        1-q_i-q_j-z_{ij}\le 0.
\]
The Lagrangian is
\begin{align*}
L(q,z;\alpha,\beta,\zeta)
&=
\sum_{ij\in E(A)}z_{ij}Y_{ij}+D\sum_iq_i
+\sum_i\alpha_i(q_i-1)
+\sum_{ij\in E(A)}\beta_{ij}(z_{ij}-1) +\sum_{ij\in E(A)}\zeta_{ij}(1-q_i-q_j-z_{ij})\\
&=
\sum_i\left(D+\alpha_i-\sum_{j:ij\in E(A)}\zeta_{ij}\right)q_i
+\sum_{ij\in E(A)}(Y_{ij}+\beta_{ij}-\zeta_{ij})z_{ij}\\
&\qquad
+\sum_{ij\in E(A)}\zeta_{ij}-\sum_i\alpha_i-\sum_{ij\in E(A)}\beta_{ij}.
\end{align*}
Taking the infimum over $q,z\ge0$ is finite if and only if
\begin{equation}
        D+\alpha_i-\sum_{j:ij\in E(A)}\zeta_{ij}\ge0,
        \qquad
        Y_{ij}+\beta_{ij}-\zeta_{ij}\ge0.
\label{eq:dual-constraints}
\end{equation}
Thus, strong duality gives
\begin{equation*}
\Hin_{A,D}(Y)
=
\max_{\alpha,\beta,\zeta\ge0}
\left\{
\sum_{ij\in E(A)}\zeta_{ij}-\sum_i\alpha_i-\sum_{ij\in E(A)}\beta_{ij}
\right\},
\end{equation*}
subject to the constraints~\eqref{eq:dual-constraints}. Consequently, $t\le\Hin_{A,D}(Y)$ if and only if there exist nonnegative $\alpha,\beta,\zeta$ satisfying the constraints~\eqref{eq:dual-constraints} and
\begin{equation}
        t\le
        \sum_{ij\in E(A)}\zeta_{ij}-\sum_i\alpha_i-\sum_{ij\in E(A)}\beta_{ij}.
\label{eq:hypograph-linear}
\end{equation}
Therefore, using equation~\eqref{eq:l1-affine}, we see that $V_\theta(Z)$ is the value of the SDP that maximizes
\[
        t-\lambda\sum_{i<j}Y_{ij}+\frac\theta n\|Y-Z\|_1
\]
over variables $(Y,t,\alpha,\beta,\zeta)$, subject to $Y\in\cC$, inequalities~\eqref{eq:dual-constraints} and~\eqref{eq:hypograph-linear}, and nonnegativity of the dual variables.  This formulation has polynomially many variables and constraints, and the only nonlinear constraint is $Y\succeq0$.

%%%%%

\subsection{Proof of Lemma~\ref{lem:hungarian-canon}}
\label{AppLemRomanian}

We first recall a standard formulation of the linear sum assignment problem: Given a weight matrix \(W=(W_{ab})\in \mathbb{R}^{K\times K}\), the maximum-weight
assignment problem is $\max_{\pi\in \Sn_K} \sum_{a=1}^K W_{a,\pi(a)}$. Equivalently, writing \(x_{ab}\in\{0,1\}\) for whether row \(a\) is assigned to
column \(b\), this is the integer program
\begin{align}
\label{eq:Max_Weight_Assign}
    \max_{x\in\{0,1\}^{K\times K}}
        \quad & \sum_{a=1}^K\sum_{b=1}^K W_{ab}x_{ab}\notag \\
    \text{subject to}
        \quad & \sum_{b=1}^K x_{ab}=1, \qquad a=1,\dots,K,\\
        \quad & \sum_{a=1}^K x_{ab}=1, \qquad b=1,\dots,K\notag.
\end{align}
The feasible binary matrices are exactly the permutation matrices, so the two
formulations are identical. The Hungarian algorithm, or Kuhn--Munkres algorithm,
solves the linear sum assignment problem in \(O(K^3)\) arithmetic operations \cite[Chapter 11.2]{papadimitriou1998combinatorial}.

%Note that the set
%\[
%        [\sigma]=\{\pi\circ\sigma:\pi\in\mathfrak S_K\}
%\]
%is finite and nonempty.  Therefore the closest elements of \([\sigma]\) to
%\(\sigma^\star\) exist, and lexicographic tie-breaking selects a unique one.  Thus
%\(\operatorname{Can}_{\sigma^\star}(\sigma)\) is well-defined and belongs to
%\([\sigma]\).
%First note that $R_{\sigma^*}$ contains exactly one representative from every orbit $[\sigma] \subseteq \Sigma$; uniqueness follows from lexicographic tie-breaking.

Now note that if \(\rho\in\mathfrak S_K\), we have
\[
        \{\pi\circ(\rho\circ\sigma):\pi\in\mathfrak S_K\}
        =
        \{\pi\circ\sigma:\pi\in\mathfrak S_K\}.
\]
Hence, \(\operatorname{Can}_{\sigma^\star}(\rho\circ\sigma)
=\operatorname{Can}_{\sigma^\star}(\sigma)\).  The canonical representative depends only
on the orbit, not on the particular labeling used to describe it. Therefore, every orbit
contains exactly one element of \(R_{\sigma^\star}\), namely its canonical
representative. If \(\sigma\in R_{\sigma^\star}\), then \(\sigma\) is the selected closest
element of its orbit to \(\sigma^\star\).  Hence, we have
\[
        h(\sigma,\sigma^\star)
        =
        \min_{\pi\in\mathfrak S_K}h(\pi\circ\sigma,\sigma^\star)
        =
        \dorb([\sigma],[\sigma^\star]).
\]

It remains to prove polynomial-time computability. Define the confusion matrix
\[
        N_{ab}(\sigma,\sigma^\star)
        :=
        |\{i:\sigma_i=a,\ \sigma_i^\star=b\}|,
        \qquad a,b\in[K].
\]
For a permutation \(\pi\), the number of agreements between \(\pi\circ\sigma\) and
\(\sigma^\star\) is
\[
        A(\pi):=\sum_{a=1}^K N_{a,\pi(a)}(\sigma,\sigma^\star).
\]
Indeed, vertices with \(\sigma_i=a\) receive label \(\pi(a)\) after relabeling, and
exactly \(N_{a,\pi(a)}(\sigma,\sigma^\star)\) of them have
\(\sigma_i^\star=\pi(a)\). Therefore, we have
\[
        h(\pi\circ\sigma,\sigma^\star)=n-A(\pi),
\]
so closest relabelings are exactly the maximizers of \(A(\pi)\). We now encode the lexicographic tie-breaking into the same assignment problem. Set
\[
        B:=2K+1,
        \qquad
        L_a(\sigma):=\sum_{i:\sigma_i=a}B^{n-i},
\]
and define
\[
        W_{ab}:=B^{n+1}N_{ab}(\sigma,\sigma^\star)-bL_a(\sigma).
\]
For a permutation \(\pi\), let
\[
        F(\pi):=\sum_{a=1}^K W_{a,\pi(a)}
        =
        B^{n+1}A(\pi)-R(\pi),
        \qquad
        R(\pi):=\sum_{a=1}^K\pi(a)L_a(\sigma).
\]

We claim that a maximizer of \(F\) gives exactly
\(\operatorname{Can}_{\sigma^\star}(\sigma)\). We first prove that every maximizer of \(F\) maximizes \(A\).  For every
\(\pi\in\mathfrak S_K\), since $B=2K+1$, we have 
\[
        0\le R(\pi)
        =
        \sum_{i=1}^n(\pi\circ\sigma)_i B^{n-i}
        \le
        K\sum_{r=0}^{n-1}B^r
        <
        B^n,
\]
where the identity follows from
\[
        \sum_{a=1}^K \pi(a)L_a(\sigma)
        =
        \sum_{a=1}^K \pi(a)\sum_{i:\sigma_i=a}B^{n-i}
        =
        \sum_{i=1}^n(\pi\circ\sigma)_iB^{n-i}.
\]
If \(A(\pi)\ge A(\rho)+1\), then
\[
\begin{aligned}
        F(\pi)-F(\rho) =
        B^{n+1}\{A(\pi)-A(\rho)\}
        -\{R(\pi)-R(\rho)\} \ge
        B^{n+1}-R(\pi)+R(\rho)
        >
        B^{n+1}-B^n
        >0.
\end{aligned}
\]
Hence, \(A(\pi)>A(\rho)\) implies \(F(\pi)>F(\rho)\). Therefore, any maximizer of
\(F\) must belong to \(\arg\max_{\pi}A(\pi)\). Now restrict to $\mathcal M:=\arg\max_{\pi\in\mathfrak S_K}A(\pi)$. For \(\pi\in\mathcal M\), note that \(A(\pi)\) is constant. Hence, maximizing \(F(\pi)\) over
\(\mathcal M\) is equivalent to minimizing \(R(\pi)\) over \(\mathcal M\): $$\arg\max_{\pi\in\mathcal M}F(\pi)=\arg\min_{\pi\in\mathcal M}R(\pi).$$ Finally, we have
\[
        R(\pi)
        =
        \sum_{i=1}^n(\pi\circ\sigma)_iB^{n-i}.
\]
Since \(B>K\), minimizing this base-\(B\) encoding selects the lexicographically
smallest relabeled vector \(\pi\circ\sigma\). Hence, a maximizer of \(F\) gives exactly
\(\operatorname{Can}_{\sigma^\star}(\sigma)\). 

Regarding exact computation, maximizing $F(\pi)=\sum_{a=1}^K W_{a,\pi(a)}$ is precisely the standard maximum-weight assignment problem~\eqref{eq:Max_Weight_Assign} on the \(K\times K\) matrix
\(W=(W_{ab})\). This can be solved by the Hungarian algorithm in \(O(K^3)\) arithmetic operations. Since \(B=2K+1\), \(0\le N_{ab}\le n\), and
\(0\le L_a(\sigma)<B^n\), the integer weights \(W_{ab}\) have polynomial encoding length in \(n\). After computing the maximizing permutation \(\pi_{\mathrm H}\), we have
\[
        \operatorname{Can}_{\sigma^\star}(\sigma)
        =
        \pi_{\mathrm H}\circ\sigma.
\]
Thus, membership in \(R_{\sigma^\star}\) is tested by computing
\(\pi_{\mathrm H}\circ\sigma\), and checking whether it equals \(\sigma\), which takes
\(O(n)\) additional time. We conclude that membership in \(R_{\sigma^\star}\) is testable
in polynomial time using one \(K\times K\) assignment problem.

%%%%%

\subsection{Proof of Lemma~\ref{lem:geometry}}
\label{AppLemGeom}

Relabel $\sigma$ so that $h(\sigma,\tau)=\dorb([\sigma],[\tau])$.  Let
\[
        C_x:=\{i:\tau_i=x\},
        \qquad
        D_y:=\{i:\sigma_i=y\},
\]
and define the normalized confusion matrix $M_{xy}:=\frac{|C_x\cap D_y|}{n}$. Every row and column of $M$ has sum $1/K$.  Optimal alignment gives
\begin{align}
        \sum_x M_{xx}\ge \sum_x M_{x,\pi(x)},
        \qquad\forall \pi\in\Sn_K,
\label{eq:optim_align}
\end{align}
because we have relabeled \(\sigma\) to minimize the Hamming distance to \(\tau\). Now, $KM$ is doubly stochastic. By the Birkhoff--von Neumann theorem~\cite[Theorem 1.1]{chen2019birkhoff}, we have
\[
        KM=\sum_{\pi\in\Sn_K} w_\pi P_\pi,
        \qquad
        w_\pi\ge0,
        \quad
        \sum_\pi w_\pi=1,
\]
where $P_\pi$ is the permutation matrix. Using inequality~\eqref{eq:optim_align} and multiplying by $K$, we have
\[
        \langle KM,P_\pi\rangle\le \langle KM,I\rangle=\Tr(KM),
        \qquad\forall \pi,
\]
implying that
\[
        \|KM\|_F^2
        =\langle KM,KM\rangle
        =\sum_\pi w_\pi\langle KM,P_\pi\rangle
        \le \Tr(KM).
\]
Equivalently, we have $\sum_{x,y}M_{xy}^2\le\frac1K\sum_xM_{xx}$. The number of ordered pairs that are in the same cluster under $\tau$ is $Ks^2=n^2/K$, and similarly for $\sigma$. The number of ordered pairs in the same cluster under both labelings is
\[
        \sum_{x,y}|C_x\cap D_y|^2
        =n^2\sum_{x,y}M_{xy}^2.
\]
Thus, we have
\[
        \frac1{n^2}\|Y^\sigma-Y^\tau\|_1
        =
        \frac2K-2\sum_{x,y}M_{xy}^2
        \ge
        \frac2K\left(1-\sum_xM_{xx}\right).
\]
Finally, we have $1-\sum_xM_{xx} = \frac{h(\sigma,\tau)}{n} = \frac{\dorb([\sigma],[\tau])}{n}$, establishing the claim.

%%%%%

\subsection{Proof of Lemma~\ref{lem:certificate-margin}}
\label{AppCertMar}

The certificate says that, for every $Y\in\cC$, we have
\[
        \Psiin_{A,D}(Y)+\frac\theta n\|Y-Y^{\sigma^\star}\|_1
        \le
        \Psiin_{A,D}(Y^{\sigma^\star}).
\]
Taking $Y=Y^\sigma$ gives
\[
        \Tin_{A,D}(\sigma^\star)-\Tin_{A,D}(\sigma)
        \ge
        \frac\theta n\|Y^\sigma-Y^{\sigma^\star}\|_1 \ge
        \frac{2\theta}{K}\dorb([\sigma],[\sigma^\star]),
\]
as required, by Lemma \ref{lem:geometry}.

For $\sigma\in R_{\sigma^\star}$, Lemma \ref{lem:hungarian-canon} gives $h(\sigma,\sigma^\star)=\dorb([\sigma],[\sigma^\star])$. Thus, inequality~\eqref{eq:certified-margin-theta} gives
\[
        \Tin_{A,D}(\sigma)-\Tin_{A,D}(\sigma^\star)
        \le
        -\frac{2\theta}{K} h(\sigma,\sigma^\star).
\]
Multiplying by $\eta$ and using $\kappa=2\eta\theta/K$, yields
\[
        \eta[\Tin_{A,D}(\sigma)-\Tin_{A,D}(\sigma^\star)]
        +\kappa h(\sigma,\sigma^\star)
        \le 0.
\]
Hence, $b_A(\sigma)\le1$.

%%%%%

\subsection{Proof of Lemma~\ref{lem:sampler-exact}}
\label{AppLemExact}

By Lemma \ref{lem:certificate-margin}, we have $b_A(\sigma)\le 1$ for every $\sigma\in\Sigma\cap R_{\sigma^\star}$.
We now compute the law of the accepted canonical representative. Let $G:=\bigl(1+(K-1)e^{-\kappa}\bigr)^n $. For every $\sigma\in[K]^n$, the product proposal probability is 
\begin{equation}
\nu_A(\sigma)=G^{-1}e^{-\kappa h(\sigma,\sigma^\star)}. \label{eq:proposal-density} 
\end{equation}
Define the one-trial acceptance function 
\[ 
a_A(\sigma) := 
\begin{cases} \exp\left\{ \eta\bigl(\widetilde T_{A,D}(\sigma) -\widetilde T_{A,D}(\sigma^\star)\bigr) + \kappa h(\sigma,\sigma^\star) \right\}, & \sigma\in\Sigma\cap R_{\sigma^\star},\\ 0, & \text{otherwise}. 
\end{cases} 
\]
For $\sigma\in\Sigma\cap R_{\sigma^\star}$, we have $b_A(\sigma) \le 1$, so $0\le a_A(\sigma)\le 1$. The probability that a single proposal trial is accepted is 
\[ 
p^A_{\rm acc} := \sum_{\sigma\in[K]^n} \nu_A(\sigma)a_A(\sigma).
\]
Since $a_A(\sigma)=0$ outside $\Sigma\cap R_{\sigma^\star}$, using equation~\eqref{eq:proposal-density} gives 
\begin{align} p^A_{\rm acc} &= \sum_{\sigma\in\Sigma\cap R_{\sigma^\star}} G^{-1}e^{-\kappa h(\sigma,\sigma^\star)} \exp\left\{ \eta\bigl(\widetilde T_{A,D}(\sigma) -\widetilde T_{A,D}(\sigma^\star)\bigr) + \kappa h(\sigma,\sigma^\star) \right\} \notag\\ 
&= G^{-1}e^{-\eta\widetilde T_{A,D}(\sigma^\star)} \sum_{\sigma\in\Sigma\cap R_{\sigma^\star}} e^{\eta\widetilde T_{A,D}(\sigma)}. 
\label{eq:pacc-formula} 
\end{align}  
In particular, we have $p^A_{\rm acc}>0$, since $\sigma^\star\in\Sigma\cap R_{\sigma^\star}$. Let $T$ be the first accepted trial, and let $\widehat\sigma_R$ denote the accepted canonical representative before the final random relabeling. Fix $\sigma\in\Sigma\cap R_{\sigma^\star}$. Then

\begin{align} \mathbb P(\widehat\sigma_R=\sigma) = \sum_{t=1}^{\infty} \mathbb P(T=t,\widehat\sigma_R=\sigma) = \sum_{t=1}^{\infty} (1-p^A_{\rm acc})^{t-1}\nu_A(\sigma)a_A(\sigma) = \frac{\nu_A(\sigma)a_A(\sigma)}{p^A_{\rm acc}}.
\label{eq:accepted-canonical-law-raw}
\end{align} 

Substituting equation~\eqref{eq:proposal-density}, the definition of $a_A(\sigma)$, and equation~\eqref{eq:pacc-formula} into equation~\eqref{eq:accepted-canonical-law-raw}, the factors involving $G$, $\kappa h(\sigma,\sigma^\star)$, and $\widetilde T_{A,D}(\sigma^\star)$ cancel. We obtain 
\begin{align}
\mathbb P(\widehat\sigma_R=\sigma) = \frac{e^{\eta\widetilde T_{A,D}(\sigma)}} {\sum_{\tau\in\Sigma\cap R_{\sigma^\star}} e^{\eta\widetilde T_{A,D}(\tau)}} , \qquad \sigma\in\Sigma\cap R_{\sigma^\star}.
\label{eq:canonical-gibbs-law} 
\end{align}

It remains to pass from canonical representatives to all labeled assignments in $\Sigma$. Since $\Sigma$ is the exact-size labeling space, each label is used exactly $s=n/K$ times. Therefore, no non-identity permutation in $\mathfrak S_K$ fixes a labeling $\sigma\in\Sigma$. Hence, every orbit
\[ 
[\sigma]=\{\pi\circ\sigma:\pi\in \mathfrak S_K\} 
\]
has exactly $K!$ distinct elements. By Lemma \ref{lem:hungarian-canon}, the set $R_{\sigma^\star}$ contains exactly one representative from each orbit. Consequently, the map $\mathfrak S_K\times R_{\sigma^\star}\to\Sigma$, such that $(\pi,r)\mapsto \pi\circ r$, is a bijection. Fix an arbitrary $\sigma\in\Sigma$. Let $r_\sigma\in R_{\sigma^\star}$ be the unique canonical representative of the orbit $[\sigma]$. By the bijection, there is a unique permutation \(\pi_\sigma\in \mathfrak S_K\) such that $\sigma=\pi_\sigma\circ r_\sigma$. The final output is $\widehat\sigma=\Pi\circ \widehat\sigma_R$, where $\Pi\sim\operatorname{Unif}(\mathfrak S_K)$ is independent of $\widehat\sigma_R$. Thus, we have
\begin{align} 
\mathbb P(\widehat\sigma=\sigma) &= \sum_{\pi\in \mathfrak S_K} \mathbb P\bigl(\Pi=\pi,\ \pi\circ\widehat\sigma_R=\sigma\bigr) = \sum_{\pi\in \mathfrak S_K} \mathbb P(\Pi=\pi)\, \mathbb P\bigl(\widehat\sigma_R=\pi^{-1}\circ\sigma\bigr). 
\label{eq:sum-over-perms}
\end{align}  
For $\pi\ne\pi_\sigma$, the labeling $\pi^{-1}\circ\sigma$ is not the canonical representative $r_\sigma$. Hence, we have $\pi^{-1}\circ\sigma\notin R_{\sigma^\star}$, and since $\widehat\sigma_R\in R_{\sigma^\star}$ almost surely, we have $\mathbb{P}\bigl(\widehat\sigma_R=\pi^{-1}\circ\sigma\bigr)=0$. Therefore, only the term \(\pi=\pi_\sigma\) contributes to equation~\eqref{eq:sum-over-perms}, and $\mathbb P(\widehat\sigma=\sigma) = \frac{1}{K!}\, \mathbb P(\widehat\sigma_R=r_\sigma)$. Using equation~\eqref{eq:canonical-gibbs-law}, we have
\begin{align}
\mathbb P(\widehat\sigma=\sigma) = \frac{1}{K!} \frac{e^{\eta\widetilde T_{A,D}(r_\sigma)}} {\sum_{r\in\Sigma\cap R_{\sigma^\star}} e^{\eta\widetilde T_{A,D}(r)}}. 
\label{eq:almost-final-law} 
\end{align}
Finally, note that $\widetilde T_{A,D}$ is invariant under global relabeling, because it depends on $\sigma$ only through the cluster matrix $Y^\sigma$. Hence, we have $\widetilde T_{A,D}(r_\sigma)=\widetilde T_{A,D}(\sigma)$. Moreover, since each orbit has exactly $K!$ elements and $\widetilde T_{A,D}$ is constant on each orbit, we have
\[ 
\sum_{\tau\in\Sigma}e^{\eta\widetilde T_{A,D}(\tau)} = K! \sum_{r\in\Sigma\cap R_{\sigma^\star}} e^{\eta\widetilde T_{A,D}(r)}. 
\] 
Combining this with equation~\eqref{eq:almost-final-law} gives
\[
\mathbb P(\widehat\sigma=\sigma) = \frac{e^{\eta\widetilde T_{A,D}(\sigma)}} {\sum_{\tau\in\Sigma}e^{\eta\widetilde T_{A,D}(\tau)}}, 
\]
which is the desired Gibbs distribution.

%\begin{lemma}[Expected number of proposal trials]
%\label{lem:runtime-trials}
%If $\kappa\ge \log(4n(K-1))$, then the expected number of proposal trials in Algorithm \ref{alg:certified-sampler} is at most $e^{1/4}$.
%\end{lemma}

Turning to the expected number of proposal trials, note that since $(K-1)e^{-\kappa}\le(4n)^{-1}$, we have
\[
        G=(1+(K-1)e^{-\kappa})^n\le \left(1+\frac1{4n}\right)^n\le e^{1/4}.
\]
Let
\[
        Z_A:=\sum_{\sigma\in\Sigma\cap R_{\sigma^\star}}
        \exp\{\eta[\Tin_{A,D}(\sigma)-\Tin_{A,D}(\sigma^\star)]\}.
\]
The labeling $\sigma^\star$ is canonical and contributes 1, so $Z_A\ge1$.  From the equation \eqref{eq:pacc-formula}, we have $p_{\mathrm{acc}}=G^{-1}Z_A\ge G^{-1}$. Thus, we have $$\Ee[\text{trials}]=1/p_{\mathrm{acc}}\le G\le e^{1/4}.$$

%%%%%

\section{Proofs for Section~\ref{SecSBMAnalysis}}

%%%%%

\subsection{Proof of Lemma~\ref{lem:true-within-conc}}
\label{AppLemTrue}

First, we have a result based on a Chernoff bound for independent Bernoulli variables. It is a special case of \cite[Theorem 1]{hoeffding1963probability}.

\begin{lemma}[Theorem 1 in \cite{hoeffding1963probability}]
\label{lemma:Chern_Bern}
Let \(X_r \overset{indep.}{\sim} \mathrm{Bern}(p_r)\), for $r \in [m]$. Set $X=\sum_{r=1}^m X_r$ and $\mu=\mathbb E X$. For every \(t>\mu\), we have
\[
\mathbb P(X\ge t)
\le
\left(\frac{e\mu}{t}\right)^t.
\]
\end{lemma}

\begin{comment}
\begin{proof}
For any \(\lambda>0\), Markov's inequality gives
\[
\mathbb P(X\ge t)
=
\mathbb P(e^{\lambda X}\ge e^{\lambda t})
\le
e^{-\lambda t}\mathbb E e^{\lambda X}
=
e^{-\lambda t} \prod_{r=1}^m \mathbb E e^{\lambda X_r}.
\]
Since \(X_r\sim \operatorname{Bern}(p_r)\), we have 
\[
\mathbb E e^{\lambda X_r}
=
1-p_r+p_re^\lambda
=
1+p_r(e^\lambda-1)
\le
\exp\{p_r(e^\lambda-1)\}.
\]
Thus, we have
%\[
%\mathbb E e^{\lambda X}
%\le
%\exp\left\{(e^\lambda-1)\sum_{r=1}^m p_r\right\}
%=
%\exp\{\mu(e^\lambda-1)\}.
%\]
%Thus
\[
\mathbb P(X\ge t)
\le
\exp\left\{-\lambda t + (e^\lambda-1)\sum_{r=1}^m p_r\right\} \le 
\exp\{-\lambda t+\mu(e^\lambda-1)\}.
\]
Choose $\lambda=\log(t/\mu)$. Since \(t>\mu\), this choice satisfies \(\lambda>0\), and
\(e^\lambda=t/\mu\). Hence, we have
\[
\mathbb P(X\ge t)
\le
\exp\{-t\log(t/\mu)+t-\mu\}
\le
\exp\{-t\log(t/\mu)+t\}.
\]
Finally, $\exp\{-t\log(t/\mu)+t\} = \left(\frac{e\mu}{t}\right)^t$, proving the claim.
\end{proof}
\end{comment}

For a fixed vertex $i$, define $ X_i:=\sum_{j\ne i}A_{ij}\ind\{\sigma_0(j)=\sigma_0(i)\}$. Then $X_i$ is a sum of $s-1$ independent Bernoulli variables with parameter $a/n$, so
\[
        \mu_i:=\Ee X_i\le \frac{s a}{n}=\frac aK.
\]
For $D=C_{deg}a/K$ with $C_{deg}$ large enough, Lemma \ref{lemma:Chern_Bern} with $t = D$ implies $\Pp(X_i\ge D)\le e^{-cD}$.  A union bound gives
\[
        \Pp(\cE_D^c)
        \le
        n e^{-cC_{deg}a/K}
        \le
        \exp\{\log(n)-cC_{deg} A_0\log(nK)\}.
\]
Increasing $C_{deg}$ and absorbing constants yields the desired result.

%%%%%

\subsection{Proof of Lemma~\ref{lem:eroded-cert-highprob}}
\label{AppErodedCert}

We will use the following result, applicable to weighted adjacency matrices. It follows directly from Theorem 3 of \cite{pirinen2019exact}, specialized to the case of balanced communities with no outlier nodes:

\begin{lemma}
\label{lemma:weighted-sdp}
There is a universal constant $C_{\mathrm{SDP}}>0$ with the following property: Consider a complete weighted graph with independent upper-triangular weights $0\le W_{ij}\le 1$, whose distribution depends only on the labeling $\sigma_0$ of nodes into $K$ equal-sized communities. Let
\begin{align*}
\mu_{\mathrm{in}} & := \Ee W_{ij}\quad (i,j\text{ in the same communities}), \\
\mu_{\mathrm{out}} &:= \Ee W_{ij}\quad (i,j\text{ in different communities}),
\end{align*}
let $\gamma_{\mathrm{dist}}:=\mu_{\mathrm{in}}-\mu_{\mathrm{out}}$, and suppose all variances are bounded by $\sigma^2$.  If
\begin{equation}
        \gamma_{\mathrm{dist}} s
        \ge
        C_{\mathrm{SDP}}\max\left\{
        \sqrt{\sigma^2 n},
        \sqrt{\sigma^2 s\log(n)},
        \log(n)
        \right\},
\label{eq:sdp-condition}
\end{equation}
then, with probability at least $1-n^{-c}$, the matrix $X^0:=\frac1sY^{\sigma_0}$ is the unique optimizer of
\begin{equation}
\max_X \langle W,X\rangle
\quad\text{subject to}\quad
X\succeq 0,
\quad X\one\le \one,
\quad \Tr(X)=K,
\quad X\ge 0.
\label{eq:weighted-sdp}
\end{equation}
\end{lemma}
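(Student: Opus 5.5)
This lemma is stated as a direct specialization of Theorem~3 of \cite{pirinen2019exact}, so the plan is to verify that our setting fits that theorem's hypotheses and then translate its conclusion.

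First, we match the model. Their theorem treats a weighted graph with independent upper-triangular weights in $[0,1]$. The nodes are partitioned into clusters plus a possibly empty set of outliers, and the means are separated: within-cluster entries have mean at least $\mu_{\mathrm{in}}$, and all other entries have mean at most $\mu_{\mathrm{out}}$. We set the outlier set to be empty and take all $K$ clusters of equal size $s=n/K$. Their general minimum cluster size then equals $s$, and their separation parameter is exactly $\gamma_{\mathrm{dist}}=\mu_{\mathrm{in}}-\mu_{\mathrm{out}}$. The variance proxy $\sigma^2$ bounds every $\operatorname{Var}(W_{ij})$, which is what their matrix-concentration step requires.

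Next, we match the SDP. With no outliers, their relaxation imposes $X\succeq0$, $X\ge0$, $\Tr(X)=K$ and $X\one\le\one$, which is exactly \eqref{eq:weighted-sdp}. We check that their ground-truth solution $\sum_k \frac{1}{|C_k|}\one_{C_k}\one_{C_k}^\top$ reduces to $X^0=\frac1s Y^{\sigma_0}$ when all $|C_k|=s$. It is feasible: it is PSD, entrywise nonnegative, has trace $K\cdot s\cdot\frac1s=K$, and satisfies $X^0\one=\one$.

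Then we translate the condition. Their sufficient condition has the form $\gamma_{\mathrm{dist}}\,s_{\min}\gtrsim \sqrt{\sigma^2 n}+\sqrt{\sigma^2 s_{\min}\log n}+\log n$, up to a universal constant. It comes from a dual certificate, with a spectral bound $\|W-\Ee W\|\lesssim\sqrt{\sigma^2 n}+\sqrt{\log n}$ and Bernstein bounds on row sums restricted to clusters. Substituting $s_{\min}=s$ and replacing the sum by a maximum, after adjusting $C_{\mathrm{SDP}}$, gives \eqref{eq:sdp-condition}. Their conclusion is that $X^0$ is the unique optimizer with probability $1-n^{-c}$, which is the claim.

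The main obstacle is bookkeeping. Their normalization of the tail probability and constants, and whether their result is phrased for $X\one\le\one$ or for the equality version, must match ours. If their statement uses the equality $X\one=\one$, we will redo the dual certificate with a nonnegative multiplier on the inequality; it is unused at $X^0$ since $X^0\one=\one$. Uniqueness survives because the primal optimum is unchanged and strict complementarity still holds.
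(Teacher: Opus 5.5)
Your proposal takes the same approach as the paper, which proves this lemma only by citing Theorem~3 of \cite{pirinen2019exact}, specialized to $K$ equal-sized clusters with no outliers. Your checks (feasibility of $X^0=\frac1sY^{\sigma_0}$, matching SDP constraints with $X\one\le\one$, and replacing the sum of the three terms by a maximum at the cost of a constant) are the bookkeeping that citation leaves implicit, so your contingency for an equality-constrained version should not be needed.
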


For $Y\in\cC$, we have $\sum_{i<j}Y_{ij}=\frac{ns-n}{2}$, so the penalty $\lambda\sum_{i<j}Y_{ij}$ is constant over $\cC$. Let $Y^0:=Y^{\sigma_0}$. By Lemma \ref{lem:truth-agreement}, on $\cE_D$, we have
\[
        \Psiin_{A,D}(Y^0)
        =
        \sum_{ij\in E(A)}Y^0_{ij}-\lambda\sum_{i<j}Y^0_{ij}.
\]
By inequality~\eqref{EqnHAD} in the proof of Lemma \ref{lem:truth-agreement}, we have
\[
        \Psiin_{A,D}(Y)
        \le
        \sum_{ij\in E(A)}Y_{ij}-\lambda\sum_{i<j}Y_{ij},
\]
for all $Y\in\cC$.
Since the penalty is constant over $\cC$, it suffices to show that
\begin{equation}
        \sum_{i<j}A_{ij}Y_{ij}+\frac\theta n\|Y-Y^0\|_1
        \le
        \sum_{i<j}A_{ij}Y^0_{ij},
        \qquad\forall Y\in\cC.
\label{eq:linear-eroded-suffices}
\end{equation}
Using equation~\eqref{eq:l1-affine} with $Z=Y^0$, define $\delta_n:=\frac{2\theta}{n}$ and the shifted weights
\begin{equation}
        B_{ij}:=
        \begin{cases}
        A_{ij}-\delta_n,& \sigma_0(i)=\sigma_0(j),\\
        A_{ij}+\delta_n,& \sigma_0(i)\ne\sigma_0(j).
        \end{cases}
\label{eq:B-shift}
\end{equation}
Then inequality~\eqref{eq:linear-eroded-suffices} is equivalent to saying that $Y^0$ maximizes $Y\mapsto \sum_{i<j}B_{ij}Y_{ij}$ over $\cC$.  More explicitly,
\[
\sum_{i<j}A_{ij}Y_{ij}+\frac\theta n\|Y-Y^0\|_1
= C_0' + \sum_{i<j}B_{ij}Y_{ij},
\]
where $C_0'$ is independent of $Y$, and equality at $Y=Y^0$ is exact.

It remains to prove that $Y^0$ is the unique maximizer of the shifted objective with high probability.  Since $B_{ij}$ may be slightly negative or slightly larger than one, define
\begin{equation}
        W_{ij}:=\frac{B_{ij}+\delta_n}{1+2\delta_n}.
\label{eq:W-transform}
\end{equation}
Adding $\delta_n$ to every off-diagonal weight and multiplying by the positive constant $(1+2\delta_n)^{-1}$ does not change the maximizer over $\cC$, because $\sum_{i<j}Y_{ij}$ is constant over $\cC$.  Moreover, $0\le W_{ij}\le1$.  Conditional on $\sigma_0$, the weights are independent and
\[
\mu_{\mathrm{in}}
=
\frac{a/n}{1+2\delta_n},
\qquad
\mu_{\mathrm{out}}
=
\frac{b/n+2\delta_n}{1+2\delta_n}.
\]
Thus,
\[
        \gamma_{\mathrm{dist}}:=\mu_{\mathrm{in}}-\mu_{\mathrm{out}}
        =
        \frac{a-b-4\theta}{n(1+2\delta_n)}
        \ge
        c \cdot \frac an,
\]
since $a-b-4\theta\ge c_\rho a$ and $a=o(n)$. Also, all variances are bounded by $Ca/n$, implying that $\gamma_{\mathrm{dist}} s\ge c\frac aK$. On the other hand, we have
\[
        \sqrt{\sigma^2 n}\le C\sqrt a,
        \qquad
        \sqrt{\sigma^2 s\log(n)}\le C\sqrt{\frac{a\log(n)}{K}}.
\]
Write $S:=a/K$.  The high-signal condition gives $S\ge A_0\log(nK)\ge A_0\log(n)$. Assumption \ref{ass:mild-K} implies $K\lesssim \log(n)/\log(\log(n))$, so for $A_0$ large and all sufficiently large $n$, we have
\[
        S\ge C\sqrt{KS}=C\sqrt a,
        \qquad
        S\ge C\sqrt{S\log(n)},
        \qquad
        S\ge C\log(n).
\]
Hence, condition \eqref{eq:sdp-condition} holds for the shifted weights $W$. Lemma \ref{lemma:weighted-sdp} implies that $X^0=Y^0/s$ is the unique optimizer of \eqref{eq:weighted-sdp}, on an event $\Omega_{PA}$, with $\mathbb{P}(\Omega_{PA}) \ge 1 - n^{-c}$. Since every $Y\in\cC$ gives $X=Y/s$ feasible for \eqref{eq:weighted-sdp}, $Y^0$ is the unique maximizer of the shifted objective over $\cC$. This proves inequality~\eqref{eq:linear-eroded-suffices}, and hence also the certification condition~\eqref{EqnCertify}, on $\cE_D \cap \Omega_{PA}$. The probability bound follows from $\Pp_{\sigma_0}(\cE_D^c)\le(nK)^{-c_{deg} A_0}$, using Lemma \ref{lem:true-within-conc}.

%%%%%

\subsection{Proof of Theorem~\ref{thm:runtime}}
\label{AppThmRuntime}

The first statement holds by construction.  If the certificate branch is used, exactness follows from Lemma \ref{lem:sampler-exact}; if not, exactness follows from brute-force enumeration.  Since the output law is $\pi_A$ for every input graph, privacy follows from Lemma \ref{lem:global-sensitivity}.

It remains to prove the runtime statement. By Lemma~\ref{lemma:weighted-sdp} and the same calculation as in Lemma~\ref{lem:eroded-cert-highprob}, but without the shift, the candidate SDP
recovers \(X^0=Y^0/s\) with probability at least \(1-n^{-c}\). Indeed, for the unshifted weights \(W_{ij}=A_{ij}\), the mean gap is
\((a-b)/n\asymp a/n\) and the variance is at most \(a/n\), so condition \eqref{eq:sdp-condition} follows from Assumptions~\ref{ass:mild-K} and~\ref{ass:high-signal}, with \(A_0\) large. Let
\[
        \Omega_{\mathrm{cand}}
        :=
        \{s\widehat X=Y^0\}
\]
denote the candidate recovery event. The preceding argument shows that $\mathbb{P}(\Omega_{\mathrm{cand}}) \ge 1 - n^{-c}$. By the proof of Lemma~\ref{lem:eroded-cert-highprob}, on
\(\cE_D\cap\Omega_{PA}\), where
\[
        \Pp_{\sigma_0}(\cE_D\cap\Omega_{PA})
        \ge
        1-n^{-c}-(nK)^{-cA_0},
\]
the eroded certificate succeeds at \(Y^0\). Define $\Omega_{\mathrm{poly}}:=\Omega_{\mathrm{cand}}\cap\cE_D\cap\Omega_{PA}$. By a union bound, after adjusting constants, we have
\[
        \Pp_{\sigma_0}(\Omega_{\mathrm{poly}})
        \ge
        1-n^{-c}-(nK)^{-cA_0}.
\]
On \(\Omega_{\mathrm{poly}}\), the candidate satisfies
\(Y^{\sigma^\star}=Y^0\), and the eroded certificate succeeds at \(Y^0\). Hence, we have
\[
        V_\theta(Y^{\sigma^\star})
        =
        V_\theta(Y^0)
        =
        \Psiin_{A,D}(Y^0)
        =
        \Psiin_{A,D}(Y^{\sigma^\star}).
\]

Moreover, $\kappa=\eta\frac{2\theta}{K}=\frac{\varepsilon}{2D}\frac{2\theta}{K} = \frac{\varepsilon\theta}{DK}$. Thus, choosing \(L\) sufficiently large, the assumption
\(\varepsilon\ge L\log(nK)\) implies $\kappa\ge \log(4n(K-1))$.
%So, parameter condition $\kappa\ge \log(4n(K-1))$ for Algorithm~\ref{AlgOverview} to run is satisfied.
On \(\Omega_{\mathrm{poly}}\), condition~\eqref{EqnCertify} holds, so the algorithm
enters the certified branch.

Lemma \ref{lem:sampler-exact} gives an expected number of proposal trials at most $e^{1/4}$, and each proposal can be processed in polynomial time. Indeed, each trial requires sampling $n$ labels, and checking membership in $\Sigma$ takes polynomial time. Next, checking canonicity can be done by solving one $K\times K$ assignment problem, and this can be executed in polynomial time by Lemma \ref{lem:hungarian-canon}. Evaluating $\Tin_{A,D}$ is polynomial, by Remark~\ref{lem:score-eval}. The candidate SDP and eroded certificate SDP are polynomial-size convex programs under the exact-oracle convention, by Lemma \ref{lem:eroded-sdp}. Moreover, sampling a uniform permutation in Algorithm \ref{alg:certified-sampler} is polynomial-time computable, by Remark \ref{remark:sampling_poly_unif_perm}.

Let \(\mathsf U\) denote the internal randomness of the mechanism. For a
deterministic input graph \(H\), let \(\mathrm{Runtime}(H,\mathsf U)\) denote
the runtime of the full mechanism on input \(H\). By the preceding argument, for every \(\omega\in\Omega_{\mathrm{poly}}\), the
full mechanism run on \(A(\omega)\) enters the certified rejection sampler
branch. Lemma \ref{lem:sampler-exact} gives an expected number of proposal trials at most
\(e^{1/4}\), and each proposal is processed in polynomial time. The candidate recovery
program is an SDP, and by Lemma \ref{lem:eroded-sdp}, the eroded certificate check is also a polynomial-size SDP. Hence, since the candidate SDP and the eroded certificate SDP are polynomial-size convex
programs, there exists a polynomial \(p(n)\) such that
\[
\mathbb E_{\mathsf U}
\left[
\mathrm{Runtime}(A(\omega),\mathsf U)
\right]
\le p(n),
\qquad
\forall \omega\in\Omega_{\mathrm{poly}}.
\]
Equivalently, we have
\[
\Omega_{\mathrm{poly}}
\subseteq
\left\{
\omega:
\mathbb E_{\mathsf U}
\left[
\mathrm{Runtime}(A(\omega),\mathsf U)
\right]
\le p(n)
\right\}.
\]
Since
\[
\mathbb P_{\sigma_0}(\Omega_{\mathrm{poly}})
\ge
1-n^{-c} - (nK)^{-cA_0},
\]
we obtain
\[
\mathbb P_{\sigma_0}
\left(
\mathbb E_{\mathsf U}
\left[
\mathrm{Runtime}(A,\mathsf U)
\right]
\le p(n)
\right)
\ge
1-n^{-c} - (nK)^{-cA_0}.
\]
This proves the high-probability polynomial expected runtime claim and
completes the proof.

%%%%%

\subsection{Proof of Theorem~\ref{thm:risk}}
\label{AppUtilityProof}

\subsubsection{Supporting lemmas}

The first lemma records the entropy estimate needed for the peeling argument. It is obtained from the proof of Lemma A.5 in \cite{klopp2026node}, specialized to our equal-sized community setting.

\begin{lemma}
\label{lemma:raw-kz}
Under Assumptions \ref{ass:mild-K} and \ref{ass:high-signal}, there are constants $C_0,C_1,c_0>0$ such that, with probability at least $1-e^{-c_0nI/K}$, we have
\begin{equation}
\sup_{u \ge 0} \left\{\log\left(\left|
\left\{\sigma\in\Sigma:
T_A(\sigma)\ge T_A(\sigma_0)-u
\right\}
\right|\right) - Bu\right\}
\le
C_1\log(nK),
\label{eq:kz-entropy}
\end{equation}
where $B := C_0\frac{K\log(nK)}{nI}$.
\end{lemma}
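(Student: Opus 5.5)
The plan is a peeling argument over Hamming shells around $\sigma_0$, in the spirit of the proof of Lemma A.5 in \cite{klopp2026node}. The sets are taken relative to $T_A(\sigma_0)$ rather than relative to $\max_\tau T_A(\tau)$.

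For $m\in\{0,1,\dots,n\}$, let $\Sigma_m:=\{\sigma\in\Sigma:\dorb([\sigma],[\sigma_0])=m\}$. Then $|\Sigma_m|\le K!\binom{n}{m}K^m\le K!\,(enK/m)^m$, which is at most $K!\,e^{C m\log(nK)}$. By Assumption~\ref{ass:mild-K}, $\log K!\le K\log K\le C_{\mathrm{mg}}\log(n)$, so the orbit multiplicity only adds $O(\log(nK))$ to the entropy.

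The core step is a lower-tail bound, uniform over $\sigma\in\Sigma_m$, on the random gap
\[
T_A(\sigma_0)-T_A(\sigma)=\sum_{i<j}A_{ij}\bigl(Y^{\sigma_0}_{ij}-Y^{\sigma}_{ij}\bigr).
\]
The penalty cancels because $\sigma,\sigma_0\in\Sigma$. By Lemma~\ref{lem:geometry}, the two cluster matrices differ on at least $\tfrac{2n}{K}m$ entries, that is, at least $\tfrac{n}{K}m$ unordered pairs. In these pairs, in-community edges of $\sigma_0$ contribute with sign $+$ and cross-community edges with sign $-$. For $m\le n/2$ or so, a counting argument in the equal-size setting shows the numbers of the two kinds of pairs are equal and of order $nm/K$. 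The Chernoff/Bhattacharyya bound for differences of Bernoulli sums then gives
\[
\Pp\bigl(T_A(\sigma_0)-T_A(\sigma)\le t\bigr)\le \exp\Bigl\{-c\,\frac{nI}{K}m+\tfrac12 t\log(a/b)\Bigr\},
\]
with $\log(a/b)\asymp 1$ by Assumption~\ref{ass:high-signal}. For larger $m$, the same bound holds with $m$ replaced by a constant times $\min(m,n-m)$-type quantities, as in \cite{zhang2016minimax}.

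Next I take a union bound over $\Sigma_m$ and a grid of thresholds $u_\ell=2^\ell$, with $u\in[0,1]$ handled as a single cell. This shows that, simultaneously for all $m$ and $\ell$, on an event of probability $1-e^{-c_0nI/K}$,
\[
|\{\sigma\in\Sigma_m:T_A(\sigma)\ge T_A(\sigma_0)-u\}|=0
\quad\text{unless}\quad
m\lesssim \frac{K(u+\log(nK))}{nI}.
\]
The remaining failure probability is summable in $m$ because $nI/K\ge cA_0\log(nK)$ dominates the entropy $Cm\log(nK)$. Summing the cardinality bound $e^{Cm\log(nK)}$ over the admissible $m$ gives
\[
\log|\{\cdots\}|\le C\log(nK)+C\log(nK)\cdot\frac{K(u+\log(nK))}{nI}.
\]
Since $K\log(nK)/(nI)\lesssim 1/A_0$, this is at most $C_1\log(nK)+Bu$ with $B=C_0K\log(nK)/(nI)$. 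Taking the supremum over $u$ yields \eqref{eq:kz-entropy}.

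The main obstacle is the uniform lower-tail bound in the large-$m$ regime, because the relabeling geometry there complicates the count of in/out pairs. To handle it, I will invoke Lemma~\ref{lem:geometry} to reduce to aligned labelings, and then import the corresponding combinatorial estimate from the proof of Lemma A.5 in \cite{klopp2026node} essentially verbatim. The change relative to \cite{klopp2026node} is only that deviations are measured from $T_A(\sigma_0)$, which is exactly what their shell bounds control.
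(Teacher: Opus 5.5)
Your proposal is correct and is essentially the paper's argument: the paper's proof simply observes, as your final paragraph does, that the shell-wise union bound in the proof of Lemma A.5 of \cite{klopp2026node} already controls the set $\{\sigma\in\Sigma:T_A(\sigma)\ge T_A(\sigma_0)-u\}$, and your sketch spells that argument out. The large-$m$ regime is in fact no obstacle and no import is needed: Lemma~\ref{lem:geometry} gives $\alpha+\gamma\ge nm/K$, where $\alpha$ and $\gamma$ count the within-pairs lost and the between-pairs gained, and exact balance gives $\alpha=\gamma$, so $\alpha\ge nm/(2K)$ for every $m$; you should also drop the $\min(m,n-m)$ fallback, since for $m$ near $n(1-1/K)$ it gives an exponent of order only $an/K^2$ against an entropy of order $n\log(nK)$, which fails when $K$ grows under Assumption~\ref{ass:high-signal}.
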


\begin{proof}
The proof of this result follows from the proof of Lemma A.5 in \cite{klopp2026node}. Although that lemma is stated for the near-maximum set
\[
S_u(A)=\{\sigma:T_A(\sigma)\ge \max_{\tau\in\Sigma}T_A(\tau)-u\},
\]
their proof introduces
\[
\widetilde S_u(A)=\{\sigma:T_A(\sigma)\ge T_A(\sigma_0)-u\}
\]
and proves the displayed entropy bound for \(\widetilde S_u(A)\), using
\(S_u(A)\subseteq \widetilde S_u(A)\).
\end{proof}

The next result is the analog of the ``peeling result," Lemma A.6, in \cite{klopp2026node}.

\begin{lemma}
\label{lem:peeling}
Suppose Assumptions \ref{ass:mild-K} and  \ref{ass:high-signal} hold. Let $\widehat\sigma\sim\pi_A$. Let $B = C_0\frac{K\log(nK)}{nI}$, and suppose $\gamma_0:=\eta-2B >0$.  For $0<\alpha<1/2$, set
\begin{equation}
        u_\star(\alpha)
        :=
        \frac{C_2\log(nK)+\log(4/\alpha)}{\gamma_0},
\label{eq:u-star}
\end{equation}
where $C_2$ is a sufficiently large absolute constant.  Then
\begin{equation*}
\Pp\left(
\cE_D\cap
\left\{
\Tin_{A,D}(\widehat\sigma)
<
\max_{\tau\in\Sigma}\Tin_{A,D}(\tau)-u_\star(\alpha)
\right\}
\right)
\le
\alpha+e^{-c_0nI/K}.
\end{equation*}
\end{lemma}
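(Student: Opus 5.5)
We run a standard peeling argument for the Gibbs law $\pi_A$, conditional on the graph, restricted to the intersection of $\cE_D$ with the good event $\Omega_{\mathrm{ent}}$ of Lemma~\ref{lemma:raw-kz}. Since $\Pp(\Omega_{\mathrm{ent}}^c)\le e^{-c_0nI/K}$, it suffices to show the following. For every fixed $A\in\cE_D\cap\Omega_{\mathrm{ent}}$,
\[
\pi_A\bigl(\Tin_{A,D}(\sigma)<M_A-u_\star\bigr)\le\alpha,
\qquad M_A:=\max_{\tau\in\Sigma}\Tin_{A,D}(\tau).
\]
Integrating over $A$ then gives the bound $\alpha+e^{-c_0nI/K}$.

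\emph{Denominator.} Fix such an $A$. The normalizing constant is at least $e^{\eta M_A}$, since the maximizer contributes this term.

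\emph{Shells.} Partition the bad set into slabs
\[
\mathcal S_k:=\{\sigma\in\Sigma:\ M_A-(k+1)u_\star\le \Tin_{A,D}(\sigma)< M_A-k u_\star\},\qquad k\ge1,
\]
and bound $\pi_A(\mathcal S_k)\le |\mathcal S_k|e^{-\eta k u_\star}$.

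\emph{Counting.} On $\cE_D$, Lemma~\ref{lem:truth-agreement} gives $\Tin_{A,D}(\sigma_0)=T_A(\sigma_0)\le M_A$ and $T_A(\sigma)\ge\Tin_{A,D}(\sigma)$. Hence every $\sigma\in\mathcal S_k$ satisfies
\[
T_A(\sigma)\ge \Tin_{A,D}(\sigma)\ge M_A-(k+1)u_\star\ge T_A(\sigma_0)-(k+1)u_\star,
\]
so $\mathcal S_k\subseteq\{\sigma:T_A(\sigma)\ge T_A(\sigma_0)-(k+1)u_\star\}$. This reduction is the essential point: the entropy bound of Lemma~\ref{lemma:raw-kz} is stated relative to $T_A(\sigma_0)$, not relative to $\max_\tau T_A(\tau)$, and Lemma~\ref{lem:truth-agreement} is exactly what transfers the comparison. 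Applying \eqref{eq:kz-entropy} with $u=(k+1)u_\star$ gives
\[
|\mathcal S_k|\le (nK)^{C_1}e^{B(k+1)u_\star}.
\]

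\emph{Summation.} Combining,
\[
\pi_A(\mathcal S_k)\le (nK)^{C_1}\exp\{-(\eta-B)ku_\star+Bu_\star\}\le (nK)^{C_1}\exp\{-(\eta-2B)ku_\star\}
\]
for $k\ge1$, using $Bu_\star\le Bku_\star$. With $\gamma_0=\eta-2B$, summing the geometric series over $k\ge1$ gives at most
\[
(nK)^{C_1}\frac{e^{-\gamma_0u_\star}}{1-e^{-\gamma_0u_\star}}.
\]
By the choice \eqref{eq:u-star}, $e^{-\gamma_0u_\star}=(nK)^{-C_2}\alpha/4$. Taking $C_2\ge C_1$ and using $\alpha<1/2$, the denominator is at least $1/2$, so the total is at most $\alpha/2\le\alpha$.

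\emph{Main obstacle.} The only delicate step is the counting step above, namely ensuring the slab sets are controlled by the $\sigma_0$-relative entropy bound despite the score being $\Tin_{A,D}$ rather than $T_A$. Lemma~\ref{lem:truth-agreement} on $\cE_D$ resolves it. The rest is bookkeeping of constants, including absorbing the extra $e^{Bu_\star}$ factor into the margin $\gamma_0=\eta-2B$.
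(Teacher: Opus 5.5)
Your proposal is correct and follows the paper's proof essentially step for step. The paper uses the same slab decomposition, the same transfer via Lemma~\ref{lem:truth-agreement} on $\cE_D$ to the $\sigma_0$-relative entropy bound of Lemma~\ref{lemma:raw-kz}, and the same geometric summation. The only cosmetic difference is bookkeeping: the paper factors out $e^{Bs}$ to get $(nK)^{C_1}e^{-(\eta-2B)s}/(1-e^{-(\eta-B)s})$, whereas you absorb $Bu_\star\le Bku_\star$ term by term.
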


\begin{proof}
Condition on a graph $A$ for which $\cE_D$ and the event \eqref{eq:kz-entropy} both hold.  For $u\ge0$, define
\[
        \cS^{\mathrm{tilde}}_u(A)
        :=
        \left\{\sigma\in\Sigma:
        \Tin_{A,D}(\sigma)
        \ge
        \max_\tau\Tin_{A,D}(\tau)-u
        \right\}.
\]
By Lemma \ref{lem:truth-agreement}, on $\cE_D$, we have
\[
        \cS^{\mathrm{tilde}}_u(A)
        \subseteq
        \left\{\sigma\in\Sigma:
        T_A(\sigma)\ge T_A(\sigma_0)-u
        \right\}.
\]
By Lemma \ref{lemma:raw-kz}, we have $\log( |\cS^{\mathrm{tilde}}_u(A)|)\le Bu+C_1\log(nK)$. Let $M:=\max_\tau\Tin_{A,D}(\tau)$. A standard exponential-mechanism peeling bound implies, for any $s>0$, that
\begin{align}
\Pp(M-\Tin_{A,D}(\widehat\sigma)>s\mid A) &\le \sum_{\ell\ge1}
\left|\cS^{\mathrm{tilde}}_{(\ell+1)s}(A)\right|
\exp\{-\eta\ell s\} \le
(nK)^{C_1}
\sum_{\ell\ge1}
\exp\{B(\ell+1)s-\eta\ell s\} \notag\\
&= (nK)^{C_1}\frac{\exp\{-(\eta-2B)s\}}{1-\exp\{-(\eta-B)s\}}.
\label{eq:M-TtildeS}
\end{align}
Indeed, for \(\ell\ge1\), set $\mathcal A_\ell:=\left\{\sigma\in\Sigma:\ell s<M-\Tin_{A,D}(\sigma)\le(\ell+1)s\right\}$. Then $$\{M-\Tin_{A,D}(\widehat\sigma)>s\}\subseteq\bigcup_{\ell\ge1}\{\widehat\sigma\in\mathcal A_\ell\}.$$ For \(\sigma\in\mathcal A_\ell\), we have
\[
        \pi_A(\sigma)
        =
        \frac{\exp\{\eta\Tin_{A,D}(\sigma)\}}
        {\sum_{\tau\in\Sigma}\exp\{\eta\Tin_{A,D}(\tau)\}}
        \le
        \frac{\exp\{\eta(M-\ell s)\}}{\exp\{\eta M\}}
        =
        e^{-\eta\ell s}.
\]
We also have $\mathcal A_\ell\subseteq\cS^{\mathrm{tilde}}_{(\ell+1)s}(A)$. This proves the first inequality in \eqref{eq:M-TtildeS}. The second inequality of \eqref{eq:M-TtildeS} follows from the entropy bound on
\(\cS^{\mathrm{tilde}}_u(A)\) with \(u=(\ell+1)s\). Taking \(s=u_\star(\alpha)\) and increasing \(C_2\) if necessary, we have
\[
(nK)^{C_1}\frac{\exp\{-(\eta-2B)s\}}{1-\exp\{-(\eta-B)s\}} \le \alpha,
\]
since \(\eta-B\ge \eta-2B=\gamma_0\), and $\alpha < 1/2$. Averaging over $A$ and adding the probability $e^{-c_0nI/K}$ that inequality~\eqref{eq:kz-entropy} fails proves the claim.
\end{proof}

The next result gives a bound on $\mathbb E r(\sigma_0,\widehat\sigma)$, provided $\widehat\sigma$ satisfies $T_A(\widehat\sigma) \ge T_A(\sigma_0)-u$. This is similar in spirit to Lemma A.1 of~\cite{klopp2026node}, where they prove their result under the condition $T_A(\widehat\sigma) \ge \max_{\tau \in \Sigma}T_A(\tau)-u$. 

\begin{lemma}
\label{lem:true-relative-raw-slack} 
Suppose Assumptions \ref{ass:mild-K} and \ref{ass:high-signal} hold, with $C_{\mathrm{mg}}$ sufficiently small, and $A_0$ sufficiently large. Let 
\[ 
t^\star := \frac12 \log\left( \frac{a\left(1-\frac{b}{n}\right)}{b\left(1-\frac{a}{n}\right)} \right). 
\] 
Let $\widehat\sigma\in\Sigma$ be a (randomized) estimator. Suppose that for some deterministic \(u\ge 0\), we have $T_A(\widehat\sigma) \ge T_A(\sigma_0)-u$, almost surely. Then
%uniformly over the exact-balanced parameter class described above, 
\[ 
\mathbb E r(\sigma_0,\widehat\sigma) \le \exp\left\{ -(1+o(1))\frac{nI}{K} + t^\star u \right\}.
\] 
%The $o(1)$ term is the same uniform $o(1)$ term appearing in the non-private Klopp--Zadik/Zhang--Zhou layer-counting analysis under \eqref{eq:constant-snr}, and Assumptions \ref{ass:mild-K}, \ref{ass:high-signal}. 
\end{lemma}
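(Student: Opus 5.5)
Since $r(\sigma_0,\widehat\sigma)\in\{0,\tfrac1n,\dots\}$, we write
\[
\mathbb E\, r(\sigma_0,\widehat\sigma)=\frac1n\sum_{m\ge1}\Pp\bigl(\dorb([\widehat\sigma],[\sigma_0])\ge m\bigr)
\]
and bound each layer probability by a union over bad labelings. The key observation is that the hypothesis $T_A(\widehat\sigma)\ge T_A(\sigma_0)-u$ almost surely implies
\[
\{\dorb([\widehat\sigma],[\sigma_0])=m\}\subseteq\bigcup_{\sigma\in\Sigma\cap R_{\sigma_0}:\,h(\sigma,\sigma_0)=m}\bigl\{T_A(\sigma)-T_A(\sigma_0)\ge -u\bigr\}.
\]
The right-hand side no longer involves the randomization of $\widehat\sigma$. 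We may restrict to canonical representatives because $T_A$ depends only on $Y^\sigma$.

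For a fixed $\sigma$ with $h(\sigma,\sigma_0)=m$, the difference $T_A(\sigma)-T_A(\sigma_0)$ is a sum over pairs where $Y^\sigma$ and $Y^{\sigma_0}$ disagree, and the $\lambda$-penalty terms cancel since both labelings lie in $\Sigma$. Let $P_-$ be the pairs with $Y^{\sigma_0}_{ij}=1,Y^\sigma_{ij}=0$ (Bernoulli$(a/n)$ edges lost), and $P_+$ the pairs with $Y^{\sigma_0}_{ij}=0,Y^\sigma_{ij}=1$ (Bernoulli$(b/n)$ edges gained). Because cluster sizes are equal, $|P_+|=|P_-|=:N(\sigma)$. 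We apply the Chernoff bound with exponent $t$:
\[
\Pp\Bigl(\sum_{P_+}A_{ij}-\sum_{P_-}A_{ij}\ge -u\Bigr)\le e^{tu}\bigl(\Ee e^{tA^{(b)}}\,\Ee e^{-tA^{(a)}}\bigr)^{N(\sigma)}.
\]
Choosing $t=t^\star$ makes $\Ee e^{t^\star A^{(b)}}\Ee e^{-t^\star A^{(a)}}=\bigl(\sqrt{ab}/n+\sqrt{(1-a/n)(1-b/n)}\bigr)^2=e^{-I}$, where $I$ is the Rényi-$1/2$ divergence. This is exactly why $t^\star$ appears, and it gives the factor $e^{t^\star u}$, which is independent of $m$. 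Next we need a combinatorial lower bound on $N(\sigma)$ for $h(\sigma,\sigma_0)=m$. Lemma~\ref{lem:geometry} gives $\|Y^\sigma-Y^{\sigma_0}\|_1\ge\frac{2n}{K}m$, i.e.\ $N(\sigma)\gtrsim \frac{nm}{K}$. For the sharp constant $(1+o(1))$ we instead follow the Zhang--Zhou layer analysis: for $m$ small, $N(\sigma)\ge m(\frac nK-m)(1-o(1))\cdot$ (appropriate factor), giving the exponent $-(1+o(1))\frac{nI}{K}m$. For larger $m$, the bound from Lemma~\ref{lem:geometry} suffices with a crude constant.

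The layer sum then takes the form
\[
\sum_{m\ge1}\#\{\sigma:h(\sigma,\sigma_0)=m\}\,e^{t^\star u}e^{-c_m\frac{nI}{K}m},\qquad \#\{\cdot\}\le (nK)^{m}.
\]
Assumption~\ref{ass:high-signal} with $A_0$ large makes $\frac{nI}{K}\gg\log(nK)$, so the entropy factor $(nK)^m$ is absorbed into the $o(1)$, and the series is dominated by $m=1$. We also use the trivial bound $\Pp\le1$ when it is better. This gives $\frac1n\cdot n\, e^{-(1+o(1))nI/K+t^\star u}$. Assumption~\ref{ass:mild-K} with small $C_{\mathrm{mg}}$ is used to keep $K$ small enough for the small-$m$ regime, $m\le \epsilon n/K$, to yield the sharp constant.

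The main obstacle is the sharp-constant lower bound on $N(\sigma)$ uniformly over canonical $\sigma$ at distance $m$, together with the matching entropy count. My first attempt is to import the corresponding counting step from the proof of Lemma A.1 in \cite{klopp2026node}. That step only used $\sigma_0$ as the comparison point, with the max replaced by $T_A(\sigma_0)$, so the argument carries over with the additional deterministic slack $u$ entering solely through the $e^{t^\star u}$ Chernoff factor.
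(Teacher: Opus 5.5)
Your proposal is correct and follows essentially the paper's route: the same layer decomposition with a union bound over orbit (canonical) representatives, the Chernoff bound at $t^\star$ giving $e^{t^\star u}e^{-I\alpha(\sigma;\sigma_0)}$ via $\alpha=\gamma$, and a layer sum driven by the Klopp--Zadik bound $\alpha\ge \frac{nm}{K}-m^2$ for small $m$ together with an entropy count. Using Lemma~\ref{lem:geometry} (which gives $\alpha\ge\frac{nm}{2K}$) for large $m$ instead of the paper's $c_\star\frac{nm}{K}$ is a harmless variant; one bookkeeping correction is that, with $A_0$ fixed, the entropy is not literally absorbed into the $o(1)$ — rather, the $\frac1n$ prefactor cancels the $n$ in the first layer, the leftover $K$ is absorbed because $\log(K)=o(nI/K)$ by Assumption~\ref{ass:mild-K}, and the layers $m\ge2$ are $o(e^{-nI/K})$ thanks to their extra exponential decay.
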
 

\begin{proof}
All probabilities and expectations are under the SBM with ground truth
$\sigma_0$, together with any auxiliary randomness used by $\widehat \sigma$. For $m=1,\dots,n$, define
\[
\mathcal E_m(u)
:=
\left\{
\exists \sigma\in\Sigma:
\dorb([\sigma], [\sigma_0])=m
\text{ and }
T_A(\sigma)\ge T_A(\sigma_0)-u
\right\}.
\]
Since $T_A(\widehat\sigma)\ge T_A(\sigma_0)-u$ almost surely and
$\widehat\sigma\in\Sigma$, we have
\[
\{\dorb([\widehat\sigma], [\sigma_0])=m\}\subseteq \mathcal E_m(u),
\qquad m=1,\dots,n.
\]
Therefore, we have
\begin{equation}
\mathbb E r(\sigma_0,\widehat\sigma)
\le
\frac1n\sum_{m=1}^n m\,\mathbb P(\mathcal E_m(u)).
\label{eq:true-slack-risk-layer-sum}
\end{equation}
Both $T_A(\sigma)$ and $\dorb([\sigma], [\sigma_0])$ are constant on $[\sigma]$.
Let
\[
\mathcal G_m
:=
\{[\sigma]:\sigma\in\Sigma,\ \dorb([\sigma], [\sigma_0])=m\}.
\]
For each $\Gamma\in\mathcal G_m$, choose one representative
$\sigma_\Gamma\in\Gamma$. Then
\[
\mathcal E_m(u)
=
\bigcup_{\Gamma\in\mathcal G_m}
\{T_A(\sigma_\Gamma)\ge T_A(\sigma_0)-u\}.
\]
Fix $\sigma\in\Sigma$, and note that $\sum_{i<j}\mathbf 1\{\sigma(i)=\sigma(j)\}=K\binom{n/K}{2}$ is independent of $\sigma$. Hence the $\lambda$-penalty cancels in
$T_A(\sigma)-T_A(\sigma_0)$. Define
\[
S(\sigma;\sigma_0)
:=
\{i<j:\sigma_0(i)=\sigma_0(j),\ \sigma(i)\neq\sigma(j)\},
\]
\[
M(\sigma;\sigma_0)
:=
\{i<j:\sigma_0(i)\neq\sigma_0(j),\ \sigma(i)=\sigma(j)\},
\]
and
\[
\alpha(\sigma;\sigma_0):=|S(\sigma;\sigma_0)|,
\qquad
\gamma(\sigma;\sigma_0):=|M(\sigma;\sigma_0)|.
\]
Exact balance gives $\alpha(\sigma;\sigma_0)=\gamma(\sigma;\sigma_0)$. Indeed, the total number of within-community pairs is the same under
$\sigma$ and $\sigma_0$, so the number of true within-pairs lost must equal
the number of true between-pairs gained. Under $\sigma_0$, we have
\[
T_A(\sigma)-T_A(\sigma_0)
\stackrel{d}{=}
\sum_{\ell=1}^{\gamma(\sigma;\sigma_0)} X_\ell
-
\sum_{\ell=1}^{\alpha(\sigma;\sigma_0)} Y_\ell,
\]
where $X_\ell\stackrel{\mathrm{iid}}{\sim}\mathrm{Ber}(b/n)$ and $Y_\ell\stackrel{\mathrm{iid}}{\sim}\mathrm{Ber}(a/n)$ are independent. By Markov's inequality, we have
\[
\mathbb P_{\sigma_0}
\left(T_A(\sigma)-T_A(\sigma_0)\ge -u\right)
\le
e^{t^\star u}
\mathbb E_{\sigma_0}
\exp\left\{
t^\star\bigl(T_A(\sigma)-T_A(\sigma_0)\bigr)
\right\}.
\]
Using $\alpha=\gamma$ and the definition of the order-$1/2$ R\'{e}nyi divergence
$I$, we have
\[
\mathbb E_{\sigma_0}
\exp\left\{
t^\star\bigl(T_A(\sigma)-T_A(\sigma_0)\bigr)
\right\}
=
\left[
\mathbb E e^{t^\star X_1}
\mathbb E e^{-t^\star Y_1}
\right]^{\alpha(\sigma;\sigma_0)}
=
e^{-I\alpha(\sigma;\sigma_0)}.
\]
Thus, with $Q(\sigma,\sigma_0):=e^{-I\alpha(\sigma;\sigma_0)}$, we have
\[
\mathbb P_{\sigma_0}
\left(T_A(\sigma)-T_A(\sigma_0)\ge -u\right)
\le
e^{t^\star u}Q(\sigma,\sigma_0).
\]
Taking a union bound over orbits gives
\begin{equation*}
\mathbb P(\mathcal E_m(u))
\le
e^{t^\star u}
\sum_{\Gamma\in\mathcal G_m}
Q(\sigma_\Gamma,\sigma_0).
\end{equation*}
Substituting this into inequality~\eqref{eq:true-slack-risk-layer-sum}, we obtain
\begin{equation}
\mathbb E r(\sigma_0,\widehat\sigma)
\le
e^{t^\star u}
\frac1n
\sum_{m=1}^n
m
\sum_{\Gamma\in\mathcal G_m}
Q(\sigma_\Gamma,\sigma_0).
\label{eq:true-slack-orbit-reduction}
\end{equation}

It remains to prove that
\begin{equation}
\frac1n
\sum_{m=1}^n
m
\sum_{\Gamma\in\mathcal G_m}
Q(\sigma_\Gamma,\sigma_0)
\le
\exp\left\{-(1+o(1))\frac{nI}{K}\right\}.
\label{eq:needed-layer-sum}
\end{equation}
Write $S_I :=  \mathrm{Signal}=\frac{nI}{K}$. Under the sparse constant-SNR regime (Assumption \ref{ass:high-signal}), we have $I=o(1)$ and $nI\asymp a$. By Assumption~\ref{ass:high-signal}, after increasing its absolute constant if necessary, we may assume that $S_I\ge A_1\log(nK)$ for an arbitrarily large fixed constant $A_1$. Also, Assumption~\ref{ass:mild-K}
implies $K=o(n)$ and $\log(K)=o(S_I)$.

First consider $K=2$. Since distance is computed modulo a global relabeling, we have
$\mathcal G_m=\varnothing$ for $m>n/2$. For each
$\Gamma\in\mathcal G_m$, choose $\sigma_\Gamma$ so that
$h(\sigma_\Gamma,\sigma_0)=m$. The two-community split/merge identity gives
\[
\alpha(\sigma_\Gamma;\sigma_0)+\gamma(\sigma_\Gamma;\sigma_0)
=
m(n-m).
\]
Since exact balance gives $\alpha=\gamma$, we have $\alpha(\sigma_\Gamma;\sigma_0)=\frac{m(n-m)}2$. Thus, we have
\[
Q(\sigma_\Gamma,\sigma_0)
=
\exp\left\{-\frac I2m(n-m)\right\}.
\]
Moreover, $|\mathcal G_m|\le \binom nm$, implying that
\[
\frac1n
\sum_{m=1}^n
m
\sum_{\Gamma\in\mathcal G_m}
Q(\sigma_\Gamma,\sigma_0)
\le
\frac1n
\sum_{m=1}^{\lfloor n/2\rfloor}
m\binom nm
\exp\left\{-\frac I2m(n-m)\right\},
\]
where $S_I=nI/2$. The $m=1$ term is at most
\[
\exp\left\{-\frac I2(n-1)\right\}
=
\exp\{-(1+o(1))S_I\}.
\]
For $2\le m\le n/4$, we have $\frac I2m(n-m)\ge \frac34 S_I$, while $\frac mn\binom nm\le \left(\frac{en}{m}\right)^m$. Taking $A_1$ large enough gives
\[
\left(\frac{en}{m}\right)^m e^{-3mS_I/4}
\le
e^{-5mS_I/8},
\]
uniformly over $2\le m\le n/4$. Therefore, the contribution of
$2\le m\le n/4$ is $o(e^{-S_I})$. For $n/4<m\le n/2$, using $m/n \le 1$, the crude bounds $\sum_{n/4 < m \le n/2}\binom nm\le 2^n$ and $m(n-m)\ge n^2/8$ give a total contribution bounded
by
\[
\exp\{n\log(2)-nS_I/8\}=o(e^{-S_I}).
\]
Thus, inequality~\eqref{eq:needed-layer-sum} holds for $K=2$.

Now assume $K\ge 3$. By Lemma B.5 in \cite{klopp2026node}, specialized to exact balance, we have for every
$\sigma\in\Sigma$ with $\dorb([\sigma], [\sigma_0])=m$, that
\[
\alpha(\sigma;\sigma_0)
\ge
\begin{cases}
\dfrac{nm}{K}-m^2, & m\le \dfrac{n}{2K}, \\[1.2ex]
c_\star \dfrac{nm}{K}, & m> \dfrac{n}{2K},
\end{cases}
\]
where $c_\star>0$ is an absolute constant. This follows from the usual bound
on $\alpha\wedge\gamma$, because exact balance gives $\alpha=\gamma$. We also use the elementary bound $|\mathcal G_m|\le \left(\frac{enK}{m}\right)^m$.
Indeed, an orbit at distance $m$ has a representative obtained by choosing
the $m$ moved vertices and assigning each of them one of at most $K$ labels. Split the sum into three ranges:
\begin{enumerate}
\item If $1\le m\le n/(4K)$, then $\alpha(\sigma_\Gamma;\sigma_0)\ge\frac{3nm}{4K}$, and hence,
\[
\frac mn|\mathcal G_m|e^{-I\alpha(\sigma_\Gamma;\sigma_0)}
\le
\frac mn
\left(\frac{enK}{m}\right)^m
e^{-3mS_I/4}.
\]
Since $S_I\ge A_1\log(nK)$ with $A_1$ sufficiently large, we have
\[
\left(\frac{enK}{m}\right)^m e^{-3mS_I/4}
\le
e^{-5mS_I/8}.
\]
The $m=1$ term is at most $K e^{-3S_I/4}$. This is not sharp enough by itself, so for $m=1$, we use the sharper bound
$\alpha\ge n/K-1$, which gives
\[
\frac1n|\mathcal G_1|e^{-I\alpha}
\le
K\exp\{-I(n/K-1)\}
=
\exp\{-(1+o(1))S_I+\log(K)\}
=
\exp\{-(1+o(1))S_I\},
\]
because $\log(K)=o(S_I)$. The remaining terms $m\ge2$ in this range are
$o(e^{-S_I})$.

\item If $n/(4K)<m\le n/(2K)$, then $\alpha(\sigma_\Gamma;\sigma_0)
\ge
\frac{nm}{2K}$, so
\[
\frac mn|\mathcal G_m|e^{-I\alpha}
\le
\left(\frac{enK}{m}\right)^m e^{-mS_I/2}.
\]
Again, for $A_1$ sufficiently large, we have
\[
\left(\frac{enK}{m}\right)^m e^{-mS_I/2}
\le e^{-mS_I/4}.
\]
Since $m>n/(4K)$ and $K=o(n)$, the total contribution of this range is
$o(e^{-S_I})$.

\item Finally, if $m>n/(2K)$, then $\alpha(\sigma_\Gamma;\sigma_0)\ge c_\star \frac{nm}{K}$ and $\frac{enK}{m}\le 2eK^2$, Therefore, we have
\[
\frac mn|\mathcal G_m|e^{-I\alpha}
\le
\exp\{m\log(2eK^2)-c_\star mS_I\}.
\]
Since $\log(K)=o(S_I)$, for all sufficiently large $n$, we have $\log(2eK^2)\le \frac{c_\star}{2}S_I$. Thus, each term in this range is at most $e^{-c_\star mS_I/2}$. Since $m>n/(2K)$ and $K=o(n)$, the whole contribution is $o(e^{-S_I})$.
\end{enumerate}

Combining the three ranges gives inequality~\eqref{eq:needed-layer-sum} for $K\ge3$, implying the desired bound.
%Together with the already proved $K=2$ case, inequality~\eqref{eq:needed-layer-sum}
%holds for every $K\ge2$.

Finally, substituting inequality~\eqref{eq:needed-layer-sum} into
inequality~\eqref{eq:true-slack-orbit-reduction} yields
\[
\mathbb E r(\sigma_0,\widehat\sigma)
\le
\exp\left\{
-(1+o(1))\frac{nI}{K}
+
t^\star u
\right\}.
\]
Taking the supremum over $\sigma_0\in\Sigma$ proves the claim.
\end{proof}

%%%%%

\subsubsection{Proof of theorem}

By Theorem~\ref{thm:runtime}, the output law of Algorithm~\ref{AlgOverview} is exactly $\pi_A$, so Lemma \ref{lem:peeling} applies.  Since $D=C_{deg}a/K$ and $nI\asymp a$, we have
\[
        B=C_0\frac{K\log(nK)}{nI}
        \asymp
        \frac{K\log(nK)}{a}.
\]
Also, $\eta=\frac{\varepsilon}{2D}\asymp\frac{\varepsilon K}{a}$. Thus, for $L$ sufficiently large, $\varepsilon\ge L\log(nK)$ implies
$\eta\ge 4B$, so $\gamma_0=\eta-2B\asymp\eta\asymp\frac{\varepsilon}{D}$. Choose $ \alpha=(nK)^{-1}e^{-c\varepsilon}$, with $c>0$ small.  Then equation~\eqref{eq:u-star} gives
\[
        u_\star(\alpha)
        \lesssim
        D\frac{\log(nK)+c\varepsilon}{\varepsilon}
        \asymp
        \frac aK\left(\frac{\log(nK)}{\varepsilon}+c\right).
\]
Since $nI/K\asymp a/K$ and $t^\star=\Theta(1)$, choosing $L$ large and then $c$ small gives
\begin{equation}
        t^\star u_\star(\alpha)
        \le
        c'\frac{nI}{K},
\label{eq:slack-small}
\end{equation}
for a sufficiently small constant $c'>0$.

Let
\[
        U
        :=
        \cE_D\cap
        \left\{
        \Tin_{A,D}(\widehat\sigma)
        \ge
        \max_{\tau}\Tin_{A,D}(\tau)-u_\star(\alpha)
        \right\}.
\]
By Lemmas \ref{lem:true-within-conc} and \ref{lem:peeling}, we have $\Pp(U^c)\le\alpha+e^{-c_0nI/K}+(nK)^{-c_{deg}A_0}$. On $U$, Lemma \ref{lem:truth-agreement} gives $T_A(\widehat\sigma)\ge T_A(\sigma_0)-u_\star(\alpha)$. Now define a hybrid estimator
\[
        \widetilde\sigma(A)
        :=
        \begin{cases}
        \widehat\sigma(A),& A\in U,\\
        \sigma^{\mathrm{raw}}(A),& A\notin U,
        \end{cases}
\]
where $\sigma^{\mathrm{raw}}(A)\in\arg\max_{\sigma\in\Sigma}T_A(\sigma)$. Then $T_A(\widetilde\sigma)\ge T_A(\sigma_0)-u_\star(\alpha)$ always. Lemma \ref{lem:true-relative-raw-slack}, and \eqref{eq:slack-small} yield
\[
        \Ee r(\sigma_0,\widetilde\sigma)
        \le
        \exp\left\{-c_1\frac{nI}{K}\right\},
\]
for some $c_1>0$. Since $r\le1$, we have $\Ee r(\sigma_0,\widehat\sigma) \le \Ee r(\sigma_0,\widetilde\sigma)+\Pp(U^c)$. Finally, we have
\[
        \alpha=(nK)^{-1}e^{-c\varepsilon},
        \qquad
        e^{-c_0nI/K}
        \le
        \exp\left\{-c\frac{nI}{K}\right\},
\]
and the within-degree error is bounded by Lemma \ref{lem:true-within-conc}.  Absorbing constants proves the bound~\eqref{eq:risk-final}.

%%%%%

\section{Proofs for Section~\ref{sec:truncated-sampler}}

\subsection{Proof of Lemma~\ref{lem:truncated-uniform-comparison}}
\label{AppTrunComp}

Given an input graph $A$, Algorithm~\ref{AlgOverview} produces a candidate $\sigma^\star(A)\in\Sigma$. Define
\[
    {\mathsf C}(A)
    :=
    \mathbf 1\left\{
        V_\theta(Y^{\sigma^\star(A)})
        =
        \widetilde\Psi_{A,D}(Y^{\sigma^\star(A)})
        \ \text{and}\
        \kappa\ge \log(4n(K-1))
    \right\}.
\]
When ${\mathsf C}(A)=1$, the modified algorithm runs the truncated
certified rejection sampler (Algorithm~\ref{alg:truncated-certified-sampler}) with candidate $\sigma^\star(A)$. When
${\mathsf C}(A)=0$, the modified algorithm samples exactly from $\pi_A$ by
brute-force enumeration. Thus, we have
\[
    Q_A^{(M)}
    =
    \begin{cases}
        \text{law of Algorithm~\ref{alg:truncated-certified-sampler}
        with candidate $\sigma^\star(A)$}, & {\mathsf C}(A)=1,\\[3pt]
        \pi_A, & {\mathsf C}(A)=0.
    \end{cases}
\]

We begin with the following result:

\begin{lemma}
\label{lem:truncated-certified-decomposition}
%For every input graph $A$, the following statements hold:
%\begin{enumerate}
%    \item If ${\mathsf C}(A)=0$, then $Q_A^{(M)}=\pi_A$.
Suppose ${\mathsf C}(A)=1$, and
%$\sigma^*(A)$ denote the candidate used in the algorithm.
%and define $r^A_{\rm acc} := (1-p^A_{\rm acc})^M$.
let $R_A$ denote the uniform distribution on the orbit
$[\sigma^\star(A)]$. Then $R_A(\sigma)\le e^{1/4}\pi_A(\sigma)$, for all $\sigma\in\Sigma$, and
\begin{equation}
\label{EqnQAM2}
    Q_A^{(M)}
    =
    (1-r^A_{\rm acc})\pi_A+r^A_{\rm acc} R_A,
\end{equation}
where
$r^A_{\rm acc}\le \overline r_M$.
%\end{enumerate}
\end{lemma}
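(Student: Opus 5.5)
The plan is to analyze one run of Algorithm~\ref{alg:truncated-certified-sampler} as $M$ independent trials of the untruncated sampler, reusing the computations from the proof of Lemma~\ref{lem:sampler-exact}. Since ${\mathsf C}(A)=1$, Lemma~\ref{lem:certificate-margin} gives $a_A(\sigma)\le 1$, so each trial is a valid accept/reject step. A single trial succeeds with probability $p^A_{\rm acc}=G^{-1}Z_A$. Conditional on success, the accepted canonical representative has the law \eqref{eq:canonical-gibbs-law}. This conditional law does not depend on the trial index, and the trials are i.i.d.

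It follows that the output law before the fallback is a mixture. With probability $1-(1-p^A_{\rm acc})^M$, some trial is accepted. In that case the output is $\Pi\circ\widehat\sigma_R$ with $\widehat\sigma_R$ distributed as in \eqref{eq:canonical-gibbs-law}, and the orbit-bijection argument of Lemma~\ref{lem:sampler-exact} shows that this law is exactly $\pi_A$. With probability $r^A_{\rm acc}:=(1-p^A_{\rm acc})^M$, no trial is accepted, and the output is $\Pi\circ\sigma^\star$ with $\Pi$ uniform. Since elements of $\Sigma$ have trivial stabilizers, this is exactly the uniform law $R_A$ on the orbit $[\sigma^\star]$, which has $K!$ elements. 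Together these give \eqref{EqnQAM2}. For the bound on $r^A_{\rm acc}$, use $p^A_{\rm acc}\ge G^{-1}\ge e^{-1/4}$ from the proof of Lemma~\ref{lem:sampler-exact}, which relies on $\kappa\ge\log(4n(K-1))$. This yields $r^A_{\rm acc}\le(1-e^{-1/4})^M=\overline r_M$.

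The step that needs real care is the domination $R_A\le e^{1/4}\pi_A$; I expect this to be the main point. For $\sigma\notin[\sigma^\star]$ the bound is trivial, because $R_A(\sigma)=0$. For $\sigma\in[\sigma^\star]$, invariance of $\Tin_{A,D}$ under relabeling gives
\[
\pi_A(\sigma)=\frac{e^{\eta\Tin_{A,D}(\sigma^\star)}}{K!\sum_{r\in\Sigma\cap R_{\sigma^\star}}e^{\eta\Tin_{A,D}(r)}}=\frac{1}{K!\,Z_A}.
\]
So the claim $R_A(\sigma)=1/K!\le e^{1/4}\pi_A(\sigma)$ is equivalent to $Z_A\le e^{1/4}$. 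To prove this, note that $Z_A=G\,p^A_{\rm acc}\le G$, since $p^A_{\rm acc}\le 1$ is a probability. Moreover $G\le e^{1/4}$ by the choice of $\kappa$. Alternatively, one can bound $Z_A$ directly from the margin \eqref{eq:certified-margin-theta}: it gives $Z_A\le\sum_{\sigma}e^{-\kappa h(\sigma,\sigma^\star)}\le G$. Either route yields $Z_A\le e^{1/4}$ and completes the argument.
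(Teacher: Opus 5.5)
Your proposal is correct and follows essentially the same route as the paper. The paper also splits a truncated run into two cases: if some trial accepts, the output has law $\pi_A$ via the canonical Gibbs law and the uniform relabeling; otherwise the fallback produces $R_A$. It then bounds $r^A_{\rm acc}$ using $p^A_{\rm acc}=G^{-1}Z_A\ge e^{-1/4}$, and obtains the domination from $R_A/\pi_A=Z_A\le G\le e^{1/4}$ on $[\sigma^\star]$. Your alternative margin-based bound on $Z_A$ is the same fact as $p^A_{\rm acc}\le 1$ (it is $a_A\le 1$ summed against $\nu_A$), so it adds no new ingredient.
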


\begin{proof}
%If ${\mathsf C}(A)=0$, the modified Algorithm \ref{alg:full} samples exactly from $\pi_A$ by
%brute-force enumeration, so $Q_A^{(M)}=\pi_A$.
Suppose ${\mathsf C}(A)=1$, and abbreviate
$\sigma^\star=\sigma^\star(A)$. Let $G=\bigl(1+(K-1)e^{-\kappa}\bigr)^n$. The proposal law in one trial is
\[
    \nu_A(\sigma)
    =
    G^{-1}e^{-\kappa h(\sigma,\sigma^\star)},
    \qquad \forall \sigma\in[K]^n,
\]
and the probability of acceptance is
\[
    a_A(\sigma)
    :=
    \begin{cases}
    \exp\left\{
        \eta\bigl[
            \widetilde T_{A,D}(\sigma)
            -
            \widetilde T_{A,D}(\sigma^\star)
        \bigr]
        +
        \kappa h(\sigma,\sigma^\star)
    \right\},
    & \sigma\in \Sigma\cap R_{\sigma^\star},\\[3pt]
    0, & \text{otherwise}.
    \end{cases}
\]
Since ${\mathsf C}(A)=1$, Lemma \ref{lem:certificate-margin} implies $0\le a_A(\sigma)\le 1$. Hence, the one-trial acceptance probability is $p^A_{\rm acc} = \sum_{\sigma\in[K]^n}\nu_A(\sigma)a_A(\sigma)$. For $\sigma\in\Sigma\cap R_{\sigma^\star}$, we have
\[
    \nu_A(\sigma)a_A(\sigma)
    =
    G^{-1}
    \exp\left\{
        \eta\bigl[
            \widetilde T_{A,D}(\sigma)
            -
            \widetilde T_{A,D}(\sigma^\star)
        \bigr]
    \right\},
\]
implying that $p^A_{\rm acc} = G^{-1}Z_A$, where
\[
    Z_A
    :=
    \sum_{\sigma\in\Sigma\cap R_{\sigma^\star}}
    \exp\left\{
        \eta\bigl[
            \widetilde T_{A,D}(\sigma)
            -
            \widetilde T_{A,D}(\sigma^\star)
        \bigr]
    \right\}.
\]
Since $\sigma^\star\in\Sigma\cap R_{\sigma^\star}$, we clearly have $Z_A\ge 1$.

Let $B_M$ denote the event that at least one of the first $M$ proposal trials is
accepted. Then
\[
    \mathbb P(B_M^c)=(1-p^A_{\rm acc})^M :=r^A_{\rm acc}.
\]
We next compute the law conditional on $B_M$. Let $\widehat\sigma_R$ denote the
accepted canonical representative before the final random relabeling. For
$\sigma\in\Sigma\cap R_{\sigma^\star}$, we have
\[
\begin{aligned}
    \mathbb P(\widehat\sigma_R=\sigma \mid B_M) =
    \frac{
        \sum_{t=1}^M(1-p^A_{\rm acc})^{t-1}\nu_A(\sigma)a_A(\sigma)
    }{
        1-(1-p^A_{\rm acc})^M
    }  =
    \frac{\nu_A(\sigma)a_A(\sigma)}{p^A_{\rm acc}}.
\end{aligned}
\]
Substituting the formulas above gives
\[
    \mathbb P(\widehat\sigma_R=\sigma \mid B_M)
    =
    \frac{
        \exp\{\eta \widetilde T_{A,D}(\sigma)\}
    }{
        \sum_{\tau\in\Sigma\cap R_{\sigma^\star}}
        \exp\{\eta \widetilde T_{A,D}(\tau)\}
    },
    \qquad
    \forall \sigma\in\Sigma\cap R_{\sigma^\star}.
\]
The uniform random relabeling maps this canonical Gibbs law to the full
Gibbs law $\pi_A$, exactly as in Lemma \ref{lem:sampler-exact}. Hence, conditional on $B_M$, the output has law $\pi_A$. Now, recall that $R_A$ is the uniform distribution on the orbit
$[\sigma^\star]$, which has cardinality $K!$ since
$\sigma^\star\in\Sigma$ is exact-balanced. Conditional on $B_M^c$, the fallback output is
$\Pi\circ\sigma^\star$ with $\Pi\sim\operatorname{Unif}(\mathfrak S_K)$, whose
law is $R_A$. Thus, we have equality~\eqref{EqnQAM2}.

It remains to prove the two bounds. Since
${\mathsf C}(A)=1$ implies $\kappa\ge \log(4n(K-1))$, we have
\[
    G
    =
    \bigl(1+(K-1)e^{-\kappa}\bigr)^n
    \le
    \left(1+\frac{1}{4n}\right)^n
    \le e^{1/4}.
\]
Since $p^A_{\rm acc}=G^{-1}Z_A$ and $Z_A\ge 1$, we obtain
$p^A_{\rm acc}\ge G^{-1}\ge e^{-1/4}$, so
\[
    r^A_{\rm acc}
    =
    (1-p^A_{\rm acc})^M
    \le
    \bigl(1-e^{-1/4}\bigr)^M
    =
    \overline r_M.
\]

Finally, note that
%since every exact-balanced labeling has orbit size $K!$ and
%$\widetilde T_{A,D}$ is invariant under global relabeling, we have
\[
    \sum_{\tau\in\Sigma}\exp\{\eta\widetilde T_{A,D}(\tau)\}
    =
    K!
    \sum_{\tau\in\Sigma\cap R_{\sigma^\star}}
    \exp\{\eta\widetilde T_{A,D}(\tau)\}.
\]
Thus, for $\sigma\in[\sigma^\star]$, we have $\pi_A(\sigma) = \frac{1}{K!Z_A}$. Since $R_A(\sigma)=\frac{1}{K!}$ on $[\sigma^\star]$, we have
\[
    \frac{R_A(\sigma)}{\pi_A(\sigma)}
    =
    Z_A.
\]
Also, $p^A_{\rm acc}=G^{-1}Z_A\le 1$, so $Z_A\le G\le e^{1/4}$. Therefore,
$R_A(\sigma)\le e^{1/4}\pi_A(\sigma)$ on $[\sigma^\star]$. Outside
$[\sigma^\star]$, we have $R_A(\sigma)=0$, so the same inequality is trivial. This completes the proof.
\end{proof}

If ${\mathsf C}(A)=0$, then $Q_A^{(M)}=\pi_A$, and the claim is immediate. Suppose ${\mathsf C}(A)=1$. By
Lemma~\ref{lem:truncated-certified-decomposition}, we have
\[
    Q_A^{(M)}
    =
    (1-r^A_{\rm acc})\pi_A+r^A_{\rm acc} R_A,
    \qquad
    r^A_{\rm acc}\le \overline r_M,
    \qquad
    R_A\le e^{1/4}\pi_A.
\]
Since $R_A\ge 0$, we have
\[
    Q_A^{(M)}(\sigma)
    \ge
    (1-r^A_{\rm acc})\pi_A(\sigma)
    \ge
    (1-\overline r_M)\pi_A(\sigma).
\]
Also,
\begin{align*}
    Q_A^{(M)}(\sigma) & \le
    (1-r^A_{\rm acc})\pi_A(\sigma)+r^A_{\rm acc} e^{1/4}\pi_A(\sigma) \\
    & =
    \bigl(1+(e^{1/4}-1)r^A_{\rm acc}\bigr)\pi_A(\sigma) \\
    & \le
    \bigl(1+(e^{1/4}-1)\overline r_M\bigr)\pi_A(\sigma).
\end{align*}
The definition of $\xi_M$ gives the two-sided likelihood-ratio bound.
%\end{proof}

%\begin{lemma}[Privacy of the truncated mechanism]
%\label{lemma:truncated-privacy}
%Let $\eta=\varepsilon/(2D)$. Then the full
%truncated mechanism $A\mapsto \widehat\sigma^{(M)}\sim %Q_A^{(M)}$ is pure $(\varepsilon+2\xi_M)$-node-DP.
%\end{lemma}

%\begin{proof}
Now let $A\sim_v A'$ and let $S\subseteq\Sigma$. We have $Q_A^{(M)}(S) \le e^{\xi_M}\pi_A(S)$ and $\pi_{A'}(S) \le e^{\xi_M}Q_{A'}^{(M)}(S)$. Since $\eta=\varepsilon/(2D)$, by Lemma \ref{lem:global-sensitivity}, we have $\pi_A(S)\le e^\varepsilon \pi_{A'}(S)$. Combining the three inequalities proves the claim.

%%%%%

\subsection{Proof of Theorem~\ref{thm:truncated-utility}}
\label{AppTruncUtil}

Fix an input graph $A$ and a true labeling $\sigma_0\in\Sigma$. Since the loss
$r(\sigma_0,\sigma)$ is nonnegative, Lemma~\ref{lem:truncated-uniform-comparison}
gives
\[
\begin{aligned}
    \mathbb E_{Q_A^{(M)}} r(\sigma_0,\sigma) =
    \sum_{\sigma\in\Sigma}r(\sigma_0,\sigma)Q_A^{(M)}(\sigma) \le e^{\xi_M}\sum_{\sigma\in\Sigma}r(\sigma_0,\sigma)\pi_A(\sigma) = e^{\xi_M}\mathbb E_{\pi_A}r(\sigma_0,\sigma).
\end{aligned}
\]
Taking an expectation over $A$, conditioned on $\sigma_0$, yields $\mathbb E_{\sigma_0} r(\sigma_0,\widehat\sigma^{(M)}) \le e^{\xi_M} \mathbb E_{\sigma_0}\mathbb E_{\pi_A}r(\sigma_0,\sigma)$. Taking the supremum over $\sigma_0\in\Sigma$ and applying Theorem \ref{thm:risk} to the Gibbs law $\pi_A$ gives the desired bound.

%%%%%

\subsection{Proof of Corollary~\ref{cor:choice_M_pure}}
\label{AppCorChoice}

First note that since $\overline r_M\le 1/2$, we have $-\log(1-\overline r_M)\le 2\overline r_M$, and
\begin{equation}
\label{remark:xi_M_bound}
\log\bigl(1+(e^{1/4}-1)\overline r_M\bigr)
    \le
    (e^{1/4}-1)\overline r_M
    \le
    2\overline r_M.
\end{equation}
So, $\xi_M \le 2\overline r_M \asymp \overline r_M = e^{-\Theta(M)}$. We look at each claim in turn:

The first claim follows from Lemma \ref{lem:truncated-uniform-comparison} directly.

For the second claim, note that by the lines of argument in the proof of Theorem \ref{thm:runtime}, we have $\Omega_{\rm poly}$, with $\mathbb{P}_{\sigma_0}(\Omega_{\rm poly}) \ge 1 - n^{-c} - (nK)^{-c'A_0}$, and ${\mathsf C}(A) = 1$ on $\Omega_{\rm poly}$. On this event, Algorithm~\ref{AlgOverview} enters the truncated certified branch, and Algorithm \ref{alg:truncated-certified-sampler} is used. Since $M = O(\operatorname{poly}(n))$, the runtime of the algorithm is polynomial-time, by construction.

For the final claim, since we assume $\varepsilon \ge L\log(nK)$, Theorem \ref{thm:truncated-utility} implies
\[
    \sup_{\sigma_0\in\Sigma}
    \mathbb E_{\sigma_0} r(\sigma_0,\widehat\sigma^{(M)})
    \le
    e^{\xi_M}
    \left[
        \exp\left\{-c_1\frac{nI}{K}\right\}
        +
        \frac{1}{nK}e^{-c_2\varepsilon}
        +
        (nK)^{-c_3A_0}
    \right].
\]
Since $\xi_M = e^{-\Theta(M)} \le 1$ and $\frac{nI}{K} = \Omega(\log(nK))$, we have $-c_1\frac{nI}{K} + \xi_M  \le -c_4\frac{nI}{K}$, for some $c_4 > 0$. Since $\varepsilon = \Omega(\log(n))$, we have $-c_2\varepsilon + \xi_M \le -c_5\varepsilon$, for some $c_5 > 0$. Lastly, there exists $c_6 > 0$ such that $(nK)^{-c_3A_0} e^{\xi_M} \le (nK)^{-c_6A_0}$.

%%%%%

\bibliographystyle{plain}
\bibliography{refs}

\end{document}